\documentclass{article}
 \usepackage{float}
\usepackage{verbatim}
\pagestyle{myheadings}
\usepackage{ragged2e} 
\usepackage{amsmath}
\usepackage{float}
\usepackage{caption}
\usepackage{subcaption}
\usepackage{amssymb}
\usepackage{mathrsfs}
\usepackage{euscript}
\usepackage{amsthm}
\usepackage{mathtools}
\numberwithin{equation}{section}
\usepackage{bigints}

\DeclareMathOperator*{\divi}{div}
\usepackage{scalerel,stackengine}
\usepackage{nameref}
\usepackage{varioref}
\usepackage[breaklinks,colorlinks]{hyperref}
\usepackage[colorlinks]{hyperref}
\hypersetup{colorlinks=true}
\hypersetup{    colorlinks=true,  linkcolor=blue,  filecolor=blue,        urlcolor=blue,  anchorcolor=black,  citecolor={blue},}
\usepackage{cleveref}

\usepackage{lipsum}
\providecommand\abstractname{Abstract}
\def\abstract{}

\renewenvironment{abstract}{%
  \centering\small
  \textbf\abstractname
  \list{}{\leftmargin0.2cm \rightmargin\leftmargin}
  \item\relax
}{%
  \endlist \par\bigskip
}

\hoffset=-1 true in
\voffset=-1 true in
\topmargin=1.2 true in
\textwidth=155 true mm
\textheight=11 true in \advance\textheight-2\topmargin
\headheight=2pt \advance\textheight-\headheight
\headsep=13pt \advance\textheight-\headsep

\oddsidemargin=1\paperwidth
\advance\oddsidemargin-\textwidth
\oddsidemargin=.5\oddsidemargin
\evensidemargin=\oddsidemargin

\hfuzz=14pt 

\theoremstyle{plain}
\newtheorem{theorem}{Theorem}[section]
\newtheorem{lemma}[theorem]{Lemma}

\newtheorem{coro}[theorem]{Corollary}

\theoremstyle{definition}

\theoremstyle{remark}

\newcommand{\R}{\mathbb{R}}

\def\Xint#1{\mathchoice
	{\XXint\displaystyle\textstyle{#1}}%
	{\XXint\textstyle\scriptstyle{#1}}%
	{\XXint\scriptstyle\scriptscriptstyle{#1}}%
	{\XXint\scriptscriptstyle\scriptscriptstyle{#1}}%
	\!\int}
\def\XXint#1#2#3{{\setbox0=\hbox{$#1{#2#3}{\int}$}
		\vcenter{\hbox{$#2#3$}}\kern-.5\wd0}}

\def\dashint{\Xint-}

\begin{document}

\title{	 \vspace{-2cm} \textbf{Multiplicity and Bifurcation
		Results for a Class of Quasilinear Elliptic Problems with Quadratic Growth on the Gradient} }

\markboth{Multiplicity and Bifurcation Results for a Class of Quasilinear Elliptic Problems 
}
         {Multiplicity and Bifurcation Results for a Class of Quasilinear Elliptic Problems 
}

\def\authorOne{\authorfont{Fiorella Rendón}}
\def\authorTwo{\authorfont{Mayra Soares}}
\def\institutionOne{\subauthorfont{Department of Mathematics, PUC-RIO}}
\def\institutionTwo{\subauthorfont{Department of Mathematics, University of Brasilia}}

\author{Fiorella Rend\'on and Mayra Soares}

\maketitle

\setcounter{tocdepth}{3}

%
%
%
%
\maketitle
%
\bgroup
 \vspace{-0.5cm}
\hrule
\egroup

%
\bgroup
\egroup

%
\bigskip 
\begin{abstract}
We investigate the existence, non-existence, and multiplicity of solutions to the following class of quasilinear elliptic equations
	\begin{align*}\tag{$P_\lambda$}
		-\mathrm{div}(A(x)Du)=c_\lambda(x)u+( M(x)Du,Du)+h(x),\qquad
		u\in H_0^1(\Omega)\cap L^\infty(\Omega),
	\end{align*}
	where $\Omega\subset\mathbb{R}^n$, $n\geq 3$, is a bounded domain with a low-regularity boundary 
 $\partial\Omega$. 
 
 The coefficients $c, h \in L^p(\Omega)$ for some $p > n$, with $c^\pm \geq 0$ and $c_\lambda(x) := \lambda c^+(x) - c^-(x)$ for a real parameter $\lambda$. The matrix $A(x)$ is uniformly positive definite and bounded, while $M(x)$ is positive definite and bounded.

Under suitable assumptions, we characterize the solution continuum of $(P_\lambda)$, including its bifurcation points. We establish existence and uniqueness results in the coercive case ($\lambda \leq 0$) and prove multiplicity results in the non-coercive case ($\lambda > 0$).

	\bigskip

	\textbf{Keywords}: Quasilinear elliptic equations, quadratic growth on the gradient,
	sub and super solutions.
	
\end{abstract}

\bgroup
\hrule
\egroup
\section{Introduction}

\quad \ We consider the following class of boundary value problems
\begin{align*}\tag{$P_\lambda$}\label{$P_lambda$}
	\left\{
	\begin{array}{rl}
		-\divi(A(x)Du)&=c_\lambda(x)u+( M(x)Du,Du)+h(x)\\
		u&\in H_0^1(\Omega)\cap L^\infty(\Omega)
	\end{array}
	\right.
\end{align*}
where $\Omega\subset\mathbb{R}^n$, for $n\geq 3$, is a $C^{1,\mathcal{D}ini}$ bounded domain and $c,h \in L^p(\Omega)$ for some $p>n$, with $c^+$ and $c^-$  non-negative functions such that $c_\lambda(x):=\lambda c^+(x)-c^-(x)$ for a parameter $\lambda\in\mathbb{R}$. Furthermore, $A(x) \in C^{0,\mathcal{D}ini}$ is a uniformly positive bounded measurable matrix, i.e. $$\vartheta I_n \leq (a_{ij}(x))\leq \vartheta^{-1}I_n,$$ for a positive constant  $\vartheta$, and $I_n$ is the identity matrix; and that $M(x)$ is a positive matrix such that
\begin{align}\label{5.2}
	0<\mu_1I_n \leq M(x) \leq \mu_2 I_n \quad \mbox{ in } \Omega,
\end{align}
for some positive constants $\mu_1$ and $\mu_2$.

We say that $u$ is a weak Sobolev (super, sub) solution to \eqref{$P_lambda$}, if $u \in H^1(\Omega)$ is such that, for any test function  $ \varphi\in H_0^1(\Omega)$, we have
\begin{align*}
	\int_\Omega A(x)DuD \varphi \, (\geq, \leq) = \int_\Omega c_\lambda(x) u \varphi + \int_\Omega\varphi(M(x)Du,Du)+\int_\Omega h(x)\varphi.
\end{align*}
 Depending on the parameter, $\lambda\in\mathbb{R}$ we study the existence and multiplicity of
solutions to \eqref{$P_lambda$}.

The class of problems \eqref{$P_lambda$} is challenging and more delicate due to the quadratic dependence in the gradient, which gives to the gradient term the same order as the Laplacian, with respect to dilations. We refer to \cite{ CJ, MR4030257, multiplicidade} for a comprehensive review of the extensive literature on this topic.

The study of the coercive case ($c \leq 0$) was initiated by Boccardo, Murat, and Puel in the 1980s, with the uniqueness of solutions established in \cite{ACJTuni}. Notably, the available results for the coercive case primarily address the non-divergence form of the problem. In contrast, the non-coercive case remained largely unexplored until recently. A pioneering example of the non-coercive, divergence-form case was studied by Jeanjean and Sirakov \cite{Jeanjean-2013}, 
 where they studied a problem directly connected to $(P_\lambda)$, which serves as a special case of our general framework.

For problems with a varying positive coefficient ($c_\lambda \gneqq 0$), the authors of \cite{ACJT, CJ} employed topological methods, exploiting the fact that the problem lies between two constant-coefficient problems of the form $\mu_1|Du|^2$ and $\mu_2|Du|^2$ ($\mu_1 > 0$). This allowed them to derive the necessary a priori bounds using a modified Brezis-Turner approach. In contrast, \cite{MR4030257} addressed the case where $\mu$ changes sign, making the comparison technique mentioned above inapplicable. Instead, the authors introduced a novel method to control solutions on $\Omega_{c^+}$, distinguishing between interior ($\Omega \cap \overline{\Omega}_{c^+}$) and boundary ($\partial\Omega \cap \overline{\Omega}_{c^+}$) cases. For the former, they applied the Interior Weak Harnack Inequality (IWHI), while for the latter, they used the Boundary Weak Harnack Inequality (BWHI). These techniques were originally applied to the case of the Laplacian operator.  In this work, we adapt and extend these techniques to the divergence-form case.

Since $c_\lambda$ changes sign, global sign conditions are unavailable, and the approaches used in \cite{ACJT, CJ} for obtaining a priori bounds cannot be applied. Instead, we adopt the framework of \cite{MR4030257} and assume the additional hypothesis
\begin{align}\label{A}\tag{$A_c^+$}
	\left\{
	\begin{array}{rl}
		\Omega_{c^+}:=\mathrm{supp}(c^+), \	&|\Omega_{c^+}|>0 \mbox{ and there exists}\\
		\varepsilon>0 \mbox{ such that } c^-=0& \mbox{in } \{x\in\Omega:d(x,\Omega_{c^+})<\varepsilon\}. \end{array}
	\right.
\end{align}

This assumption captures the ``hard" non-coercive scenario, where the zero-order coefficient is non-negative, and uniqueness of solutions is expected to fail. Theorem \ref{teo5.2}(iii) illustrates this behavior, demonstrating the multiplicity of solutions. For the definition of $\mathrm{supp}(f)$ when $f \in L^p(\Omega)$, we refer to \cite[Proposition 4.17]{MR2759829}.

Prior works such as \cite{JR, MR4098038} also treated the non-coercive case but simplified the analysis by assuming a gradient term of the form $\mu|Du|^2$ with $\mu$ constant. This permits a change of variables that eliminates the gradient term, allowing the use of variational methods.

Let $\gamma_1 > 0$ denote the principal eigenvalue of the linear problem associated with \eqref{$P_lambda$}, namely,
\begin{align}\label{eig1}\tag{$P^a_{\gamma_1}$}
	\left\{
	\begin{array}{rll}
		-\divi(A(x)D\varphi_1)&=c_{\gamma_1}(x) \varphi_1 &\mbox{ in } \Omega\\
		\varphi_1&> 0&\mbox{ in } \Omega\\
		\varphi_1&= 0&\mbox{ on } \partial\Omega.
	\end{array}
	\right.
\end{align}
In view of \cite[Theorems 8.37-8.38]{GT}, problem \eqref{eig1} admits a solution $\varphi_1$, the principal eigenfunction associated with $\gamma_1$. We note that for $\lambda = \gamma_1$ and $h(x) \gneqq 0$, problem \eqref{$P_lambda$} has no solution $u$ with $c^+(x)u \gneqq 0$, nor non-negative solutions when $\lambda \geq \gamma_1$ (see \cite[Lemma 6.1]{ACJT} for details).

In what follows, a continuum means a closed and connected set, and the above assumptions on the coefficients of the equation are assumed to hold.
More precisely, defining
\[
\Sigma:=\{(\lambda,u)\in \mathbb{R}\times C(\overline{\Omega}): u \mbox{ solves } \eqref{$P_lambda$}\},
\]
our primary contribution is a complete description of the solution set $\Sigma$ of \eqref{$P_lambda$}. Inspired by \cite{MR4030257}, in the next two theorems we prove the existence of a continuum of solutions under the assumption that the coercive problem ($P_0$) admits a solution. 
We point out that, problem ($P_0$) is the particular case of problem \eqref{$P_lambda$}, when $\lambda = 0$ or $c^+ \equiv 0$.
This limiting case is independent of $\lambda$ and admits a unique solution, as established in 
\cite[Theorem 1]{Jeanjean-2013}, where the authors derived sufficient (additional) smallness 
conditions for the existence of solutions to $(P_0)$ for general divergence-form operators. 
For related results concerning the Laplacian and $p$-Laplacian cases, see also \cite{ACJT,CF}.

\begin{theorem}\label{teo5.2}
	Suppose that $(P_0)$ has a solution $u_0$ with $c^+(x)u_0\gneqq 0$. Then
	\begin{itemize}
		\item[(i)] For all $\lambda\leq 0$,  (\ref{$P_lambda$}) has a unique solution $u_\lambda$, which satisfies $u_0-\|u_0\|_\infty\leq u_\lambda\leq u_0$;
		\item[(ii)] There exists a continuum $\mathcal{C}\subset\Sigma$ such that the projection of $\mathcal{C}$ on the $\lambda$-axis is an unbounded interval $(-\infty,\overline{\lambda}]$ for some $\overline{\lambda}\in(0,+\infty)$ and $\mathcal{C}$ bifurcates from infinity to the right of the axis $\lambda=0$;
\item [(iii)]There exists $\lambda_0\in (0,\overline{\lambda}]$ such that for all $\lambda\in(0, \lambda_0)$, the problem
   (\ref{$P_lambda$}) has at least two nontrivial solutions $u_{\lambda,i}$, for $i=1,2$, with 	$u_0 \leq	u_{\lambda,1}\ll  u_{\lambda,2}$  and hence $\displaystyle\min_{\overline{\Omega}} u_{\lambda,2}>0.$
In addition, if $0<\lambda_1<\lambda_2$, then $ u_0\leq u_{\lambda_1,1}\leq u_{\lambda_2,1}$ and the
problem  $(P_{\overline{\lambda}})$ has a unique solution $u_{\overline{\lambda}} \geq u_0$.
\end{itemize}	
	
\end{theorem}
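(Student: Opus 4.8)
The plan is to handle the three assertions in turn, using the method of sub- and super-solutions for (i) and a global continuation argument for (ii)--(iii). For (i), I would first check that, for every $\lambda\le 0$, the function $u_0$ is a super-solution of $(P_\lambda)$ while $\underline{u}:=u_0-\norm{u_0}_\infty$ is a sub-solution. Subtracting the equation satisfied by $u_0$ as a solution of $(P_0)$, the super-solution inequality at $u_0$ collapses to the pointwise condition $\lambda\,c^+(x)\,u_0\le 0$, which holds since $c^+u_0\gneqq 0$ and $\lambda\le 0$; using $D\underline{u}=Du_0$, the sub-solution inequality at $\underline{u}$ reduces to $\lambda\,c^+(x)\,(u_0-\norm{u_0}_\infty)+c^-(x)\,\norm{u_0}_\infty\ge 0$, which is immediate from $u_0\le\norm{u_0}_\infty$, $c^\pm\ge 0$ and $\lambda\le 0$. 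As $\underline{u}\le u_0$, the existence theorem for quadratic-gradient problems between ordered sub- and super-solutions yields a solution $u_\lambda$ with $u_0-\norm{u_0}_\infty\le u_\lambda\le u_0$, exactly the stated bound. For uniqueness I would invoke coercivity: when $\lambda\le 0$ one has $c_\lambda\le 0$, so the comparison/uniqueness result for the coercive case (as in \cite{ACJTuni}) applies and forces $u_\lambda$ to be the only solution.

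For (ii), I would recast $(P_\lambda)$ as a fixed-point equation $u=T_\lambda(u)$ for a compact operator on $C(\overline{\Omega})$ and let $\mathcal{C}$ be the connected component of $\Sigma$ containing the coercive branch $\{(\lambda,u_\lambda):\lambda\le 0\}$ from (i). The two essential ingredients are (a) a priori bounds on $\norm{u}_\infty$ for solutions with $\lambda$ in compact subsets of $\R$ bounded away from the critical value, and (b) a global continuation/degree argument (Leray--Schauder, or a Rabinowitz-type alternative) showing that $\mathcal{C}$ is unbounded. Since $(P_\lambda)$ admits no solution with $c^+u\gneqq 0$ once $\lambda\ge\gamma_1$, the $\lambda$-projection of $\mathcal{C}$ is bounded above; combined with the full half-line of coercive solutions this projection is therefore an interval $(-\infty,\overline{\lambda}]$ with $\overline{\lambda}\in(0,+\infty)$. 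Because $\lambda$ cannot increase past $\overline{\lambda}$, the unboundedness of $\mathcal{C}$ must occur through $\norm{u}_\infty\to+\infty$, which is precisely the statement that $\mathcal{C}$ bifurcates from infinity to the right of the axis $\lambda=0$.

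For (iii), I would read the multiplicity off the geometry of $\mathcal{C}$. For $\lambda>0$ the roles reverse and $u_0$ becomes a sub-solution of $(P_\lambda)$ — the inequality now reads $\lambda\,c^+(x)\,u_0\ge 0$ — so solutions can be located in the order cone above $u_0$. Since $\mathcal{C}$ emanates from the coercive branch and bifurcates from infinity on the right of $\lambda=0$, for $\lambda$ in a sufficiently small interval $(0,\lambda_0)$ the component must meet the slice $\{\lambda\}\times C(\overline{\Omega})$ in at least two points: a ``lower'' solution continuing the branch past $\lambda=0$ and an ``upper'' solution descending from infinity, both lying above $u_0$ by the sub-solution property just noted. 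Choosing $\lambda_0\in(0,\overline{\lambda}]$ small enough that this return structure is in force yields two solutions $u_1,u_2\ge u_0$.

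The crux of the whole argument is ingredient (a) in part (ii): obtaining a priori $L^\infty$ bounds in the absence of a global sign condition on $c_\lambda$. The estimates of \cite{ACJT,CJ} exploit exactly such sign control and are unavailable here, so the decisive step is a new bound — presumably localized near $\Omega_{c^+}$ through hypothesis \eqref{A} — that controls $\norm{u}_\infty$ uniformly as long as $\lambda$ stays away from $\overline{\lambda}$. Once this estimate is secured, the degree-theoretic construction of the continuum and the multiplicity count follow along by now standard lines.
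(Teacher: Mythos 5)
Your part (i) matches the paper's argument: $u_0$ is a supersolution of $(P_\lambda)$ for $\lambda\le 0$, $u_0-\|u_0\|_\infty$ is a subsolution, existence follows between the ordered pair, and uniqueness comes from the coercive theory of \cite{ACJTuni}; your pointwise verifications are correct. The gap is in (ii)--(iii). You build the continuum from the fixed-point formulation of $(P_\lambda)$ itself and then try to place its solutions above $u_0$ by remarking that $u_0$ becomes a subsolution for $\lambda>0$. That remark yields \emph{some} solution above $u_0$ when paired with a supersolution, but it does not constrain where the solutions lying on your component $\mathcal{C}$ actually sit: nothing forces a connected branch of $\Sigma$ through $(0,u_0)$ to remain in the cone $\{u\ge u_0\}$ once $\lambda>0$. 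This matters twice. First, the only nonexistence mechanism available (testing against the first eigenfunction, Lemma \ref{nosolution}) rules out, for $\lambda$ large, solutions \emph{satisfying} $u\ge u_0$; without that constraint on $\mathcal{C}$ you cannot conclude that its $\lambda$-projection is bounded above, i.e.\ that $\overline{\lambda}<\infty$ (Theorems \ref{teo5.3} and \ref{solucoesnegativas} show that in other configurations $(P_\lambda)$ retains solutions for all $\lambda>0$, so boundedness of the projection is not automatic). Second, the conclusion of (iii) is precisely that the two solutions satisfy $u_i\ge u_0$, which again does not follow from your argument.

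The paper closes this gap by running the degree argument not on $(P_\lambda)$ but on the truncated auxiliary problem $-\divi(A(x)Du)+u=[c_\lambda(x)+1][(u-u_0)^++u_0]+(M(x)Du,Du)+h(x)$, whose solutions are shown by comparison (using $(u-u_0)^++u_0\ge u_0$ and $\lambda c^+(x)[(u-u_0)^++u_0]\ge\lambda c^+(x)u_0\ge 0$) to satisfy $u\ge u_0$ automatically, and hence to solve $(P_\lambda)$. The continuum is produced for this auxiliary problem via Theorem \ref{continuum} (uniqueness at $\lambda=0$ plus index equal to $1$, computed through the homotopy $S_t=t\overline{T}_0$) and only then transferred to $(P_\lambda)$. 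With that device in place, the rest of your outline for (ii)--(iii) --- bounded $\lambda$-projection from the eigenfunction test, bifurcation from infinity forced by the a priori upper and lower bounds of Section \ref{aprioribound}, and the two-intersection count for small $\lambda>0$ --- goes through essentially as you describe. You correctly identified the a priori bounds as one crux, but the truncation is the second, and it is not optional.
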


We remark that from Theorem \ref{teo5.2} we know that for $\lambda > \overline{\lambda}$, (\ref{$P_lambda$}) has no non-negative solutions, however, it does not
exclude the possibility of having negative or sign-changing solutions.    

\begin{theorem}\label{teo5.3}
	Suppose that $(P_0)$ has a solution $u_0 \leq 0$ with $c^+(x)u_0\lneqq 0$. Then
	\begin{itemize}
		\item[(i)] For  $\lambda\leq 0$,  (\ref{$P_lambda$}) has a unique non-positive solution $u_\lambda$ and this solution satisfies $ u_0+\|u_0\|_\infty \geq u_\lambda\geq u_0$;
		\item[(ii)] There exists a continuum $\mathcal{C}\subset\Sigma$ such that its non-negative projection $\mathcal{C}^+$ on the $\lambda$-axis is $[0,+\infty)$;
		\item [(iii)] For $\lambda>0$, every non-positive solution to (\ref{$P_lambda$}) satisfies $u_\lambda\ll u_0$. Furthermore, (\ref{$P_lambda$}) has at least two nontrivial solutions $u_{\lambda,i}$, for $i=1,2$, with
	$
			u_{\lambda,1}\ll u_0\leq u_{\lambda,2},$  and $\displaystyle\max_{\overline{\Omega}} u_{\lambda,2}>0.$
		Moreover, if $0<\lambda_1<\lambda_2$ we have
		$ u_{\lambda_2,1}\leq u_{\lambda_1,1}\leq u_0.$
	\end{itemize}
\end{theorem}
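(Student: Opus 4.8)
The plan is to treat this theorem as the downward counterpart of Theorem~\ref{teo5.2}, running the same sub/supersolution and global--continuation scheme developed in \cite{MR4030257}, but tracking the sign of $c^+(x)u_0$ carefully: the quadratic term $(M(x)Du,Du)$ flips sign under $u\mapsto -u$, so the problem has no $u\mapsto -u$ symmetry and there is no cheap reduction to the previous theorem. Throughout I would first pass, via a Boccardo--Murat--Puel type change of unknown, to an auxiliary semilinear problem in which the quadratic gradient term is removed (or controlled), so that the comparison principle, the strong maximum principle and a compact solution operator become available; the bounds $0<\mu_1 I_n\le M(x)\le\mu_2 I_n$ let me squeeze the genuine problem between the two scalar semilinear problems associated with $\mu_1$ and $\mu_2$.

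For part~(i) I would observe that when $\lambda\le 0$ the function $u_0$ is a subsolution and the nonnegative shift $u_0+\|u_0\|_\infty$ a supersolution of \eqref{$P_lambda$}. Indeed, since $u_0$ solves $(P_0)$, inserting $u_0$ into \eqref{$P_lambda$} produces the defect $-\lambda\,c^+(x)u_0$, which is $\le 0$ because $-\lambda\ge 0$ and $c^+u_0\lneqq 0$; inserting $u_0+\|u_0\|_\infty$ reduces the supersolution inequality to $c^-\|u_0\|_\infty\ge \lambda\,c^+\,(u_0+\|u_0\|_\infty)$, which holds for $\lambda\le 0$. As these two barriers are ordered, the sub/supersolution method yields a solution $u_\lambda$ with $u_0\le u_\lambda\le u_0+\|u_0\|_\infty$, and uniqueness among nonpositive solutions follows from the comparison principle for the transformed coercive problem, the zero--order coefficient $c_\lambda=\lambda c^+-c^-\le 0$ having the good sign (cf.\ \cite{ACJTuni}).

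For parts~(ii) and~(iii) I would set up the continuation argument on the transformed equation. The driving a~priori fact is that for $\lambda>0$ the same defect computation shows $u_0$ to be a \emph{strict} supersolution of \eqref{$P_lambda$} (the defect $-\lambda c^+u_0\gneqq 0$), so every nonpositive solution lies below $u_0$; the strong maximum principle and the Hopf lemma then upgrade this to $u_\lambda\ll u_0$. Using this upper barrier together with uniform a~priori bounds for nonpositive solutions, I would continue the coercive branch of part~(i) across $\lambda=0$ and show, by a Leray--Schauder degree argument, that it neither blows up nor turns back at any finite $\lambda$, producing a continuum $\mathcal{C}\subset\Sigma$ whose nonnegative projection $\mathcal{C}^+$ is all of $[0,+\infty)$ and yielding the first solution $u_{\lambda,1}\ll u_0$. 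The monotonicity $u_{\lambda_2,1}\le u_{\lambda_1,1}\le u_0$ for $\lambda_1<\lambda_2$ then follows from comparison: inserting $u_{\lambda_1,1}$ into $(P_{\lambda_2})$ leaves the defect $(\lambda_1-\lambda_2)c^+u_{\lambda_1,1}\ge 0$, so $u_{\lambda_1,1}$ is a supersolution at level $\lambda_2$ lying above the minimal nonpositive solution $u_{\lambda_2,1}$.

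The main obstacle is the \emph{second}, sign--changing solution $u_{\lambda,2}\ge u_0$ with $\max_{\overline{\Omega}}u_{\lambda,2}>0$. Since for $\lambda>0$ the function $u_0$ is only a supersolution, it cannot serve as a lower barrier, so this solution does not come from a plain ordered sub/supersolution pair; I expect it instead from the topological structure of $\mathcal{C}$ (a second component, or the part of the continuum bifurcating from infinity to the right of $\lambda=0$), equivalently from a fixed--point--index computation. Concretely, I would take the strict subsolution $u_{\lambda,1}$ together with a large supersolution available for $\lambda$ in the relevant range, compute the degree on a large ball and on a small neighbourhood of the ordered solution, and extract a second solution above $u_0$ with positive maximum from the gap between the two indices. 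Producing the uniform a~priori bound that makes this degree computation valid --- in the absence of any global sign condition on $c$ --- is the delicate point, and is precisely where the localisation hypothesis~\eqref{A} on $\mathrm{supp}(c^+)$ would be used.
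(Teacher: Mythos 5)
Your overall architecture matches the paper's: for $\lambda\le 0$ an ordered pair of barriers ($u_0$ below, $u_0+\|u_0\|_\infty$ above) plus coercive uniqueness; for $\lambda>0$ a global continuation/degree argument built on the a priori bounds, with the monotonicity $u_{\lambda_2,1}\le u_{\lambda_1,1}$ obtained exactly as you say, by observing that $u_{\lambda_1,1}$ is a (strict) supersolution at level $\lambda_2$ and invoking minimality. The paper implements the continuation via the truncated auxiliary problem \eqref{p4}, whose solutions are automatically $\le u_0$ and coincide with solutions of $(P_\lambda)$, and computes $i(I-\widehat{T}_0,u_0)=1$ by a homotopy $S_t$; your "continue the coercive branch across $\lambda=0$" is the same idea in different clothing. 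One small correction of picture: in this theorem nothing bifurcates from infinity to the right of $\lambda=0$ (that is the scenario of Theorem \ref{teo5.2}); here the branch stays uniformly bounded on compact $\lambda$-intervals by Theorems \ref{6.3} and \ref{lowerbound}, which is precisely why its projection must be all of $[0,+\infty)$.

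The genuine gap is in the second solution. You correctly reduce it to a discrepancy between the degree on a large ball $B_{R_0}$ and the index of the ordered solution set $\mathcal{S}$, but you give no mechanism for producing that discrepancy, and this is the actual content of the proof. The paper's device is the one-parameter family $(P_{\lambda,k})$ of problem \eqref{P3}, obtained by adding $k\widetilde{c}(x)$ with $\widetilde{c}$ manufactured from the a priori constants in \eqref{5.5}: Lemma \ref{c2} shows (by testing with $\varphi^2$ and using the lower bound of Theorem \ref{lowerbound}) that $(P_{\lambda,k})$ has \emph{no} solution for $k>1$, while Theorem \ref{6.3} gives a $k$-uniform bound for $k\in[0,1]$; homotopy invariance in $k$ then forces $\deg(I-\widehat{T}_\lambda,B_{R_0}(0))=0$, and excision against $\deg(\cdot,\mathcal{S})=1$ yields the second solution. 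Without some such homotopy to a nonsolvable problem, there is no reason the two degrees you propose to compare should differ. Separately, you assert the second solution has positive maximum but give no argument; in the paper this follows from Lemma \ref{u_2>0} (for $\lambda\ge 0$ there is at most one nonpositive solution), whose proof is a nontrivial convexity argument with the function $w_{\tilde{\varepsilon}}=\big((1+\tilde{\varepsilon})u_2-u_1\big)/\tilde{\varepsilon}$ and the convexity of $\xi\mapsto (M(x)\xi,\xi)$; this lemma is also what upgrades "nonpositive solutions lie below $u_0$" to $u_\lambda\ll u_0$ in part (iii). Both ingredients need to be supplied for your outline to close.
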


	\begin{figure}[h!]
	\centering
	\begin{subfigure}{.49\textwidth}
		\centering
	\includegraphics[width=0.8\linewidth]{./fig1}
	\caption*{Illustration of Theorem 1.1}
	\label{fig: Illustration of Theorem 1.1}
	\end{subfigure}
	\begin{subfigure}{.49\textwidth}
		\centering
	\includegraphics[width=0.7\linewidth]{./fig25}
	\caption*{Illustration of Theorem 1.2}
	\label{fig: Illustration of Theorem 1.2}
	\end{subfigure}
\end{figure}

In order to prove Theorem \ref{teo5.3} we consider an auxiliary problem \eqref{P3}, whose solutions are supersolutions to (\ref{$P_lambda$}).
Consider the problem
	\begin{align*}\tag{$P_{\lambda,{k}}$}
		\left\{
		\begin{array}{rll}
			-\divi(A(x)Du)=c_\lambda(x)u+(M(x)Du,Du)+h(x)+k\widetilde{c}(x) \mbox{ in } \Omega\\
			\qquad\qquad\qquad u= 0\qquad\qquad\qquad\qquad\qquad\qquad\qquad\qquad \mbox{ on } \partial\Omega
		\end{array}
		\right.
	\end{align*}
	for $k, \lambda$ and $\widetilde{c}$   to be defined posteriorly. 
Then, from Theorem \ref{teo5.2} and Lemma \ref{c2} we are able to deduce the following corollary, which concerns to the case $h\lneqq 0$. In this result, we can see the achievement of the two above theorems simultaneously.

\begin{coro}\label{coro} Assume that $h\lneqq 0$. For all $\widetilde{\lambda}>\gamma_1$, where
	$\gamma_1>0$  is the first eigenvalue \eqref{eig1}, there exists $\widetilde{k}>0$ such that, for all $k \in (0,\widetilde{k}]$,
	\begin{enumerate}
		\item[(i)] there exists $\lambda_1\in (0,\gamma_1)$ such that
		\begin{enumerate}
			\item for all $\lambda\in (0,\lambda_1)$,  \eqref{P3}
			has at least two positive solutions;
			\item for $\lambda=\lambda_1$,  \eqref{P3} has exactly one positive solution;
			\item for $\lambda> \lambda_1$, \eqref{P3} has no non-negative solution;
		\end{enumerate}
		\item[(ii)] for $\lambda=\gamma_1$ \eqref{P3} has no solution;
		\item[(iii)] there exists $\lambda_2\in (\gamma_1,\widetilde{\lambda}]$ such that
		\begin{enumerate}
			\item for $\lambda>\lambda_2$,  \eqref{P3} has at least two solutions with $u_{\lambda,1}\ll 0$ and $\min u_{\lambda,2}<0$;
			\item for $\lambda=\lambda_2$,  \eqref{P3} has a unique non-positive solution;
			\item $\lambda<\lambda_2$,  \eqref{P3} has no non-positive solution.
		\end{enumerate}
	\end{enumerate}
\end{coro}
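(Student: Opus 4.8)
The plan is to read off the full bifurcation picture for the auxiliary problem \eqref{P3} by running the two continuum theorems on its two branches, with the first eigenvalue $\gamma_1$ of \eqref{eig1} as the separating value. The structural fact driving everything is that, for the range $k\in(0,\widetilde k]$, the zero-order datum $f$ of \eqref{P3} is nonnegative and nontrivial, $f\gneqq 0$ (this is exactly what makes its solutions supersolutions of \eqref{$P_lambda$} when $h\lneqq 0$). A single eigenfunction test then pins every nonnegative solution to the left of $\gamma_1$ and every nonpositive solution to the right: subtracting the weak form of \eqref{eig1} tested against a solution $u$ from the weak form of \eqref{P3} tested against $\varphi_1$ gives
\[
\int_\Omega (M Du,Du)\,\varphi_1 \;=\; (\gamma_1-\lambda)\int_\Omega c^+ u\,\varphi_1 \;-\; \int_\Omega f\,\varphi_1 .
\]
The left side is $\geq 0$ and $\int_\Omega f\varphi_1>0$; hence a nonnegative $u$ is impossible once $\lambda\geq\gamma_1$, and a nonpositive $u$ is impossible once $\lambda\leq\gamma_1$. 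This one identity delivers part (ii) and the strict separations $\lambda_1<\gamma_1<\lambda_2$.

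For part (i) I would use Lemma \ref{c2} to produce, for every sufficiently small $k$, a positive solution $u_0$ of \eqref{P3} at $\lambda=0$ with $c^+(x)u_0\gneqq 0$, and feed it into Theorem \ref{teo5.2}. That theorem returns a continuum $\mathcal{C}$ with $\lambda$-projection $(-\infty,\overline\lambda]$, two positive solutions $\geq u_0$ on $(0,\lambda_0)$, and none beyond $\overline\lambda$. Setting $\lambda_1:=\overline\lambda$ yields uniqueness at the fold in (i)(b) and nonexistence for $\lambda>\lambda_1$ in (i)(c); the two-solution multiplicity on the full interval $(0,\lambda_1)$ required by (i)(a) is obtained by propagating the multiplicity from $(0,\lambda_0)$ up to the fold along the connected set $\mathcal{C}$. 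Since Theorem \ref{teo5.2} forces $\lambda_1=\overline\lambda\geq\lambda_0>0$ while the displayed identity forbids nonnegative solutions for $\lambda\geq\gamma_1$, one concludes $\lambda_1\in(0,\gamma_1)$.

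For parts (ii)--(iii) I would turn to the nonpositive branch. Part (ii) is immediate from the identity at $\lambda=\gamma_1$, where it reads $\int_\Omega(M Du,Du)\varphi_1=-\int_\Omega f\varphi_1<0$, impossible for any solution. For part (iii), the same sign argument shows \eqref{P3} at $\lambda=0$ has no nonpositive solution, so Theorem \ref{solucoesnegativas} applies as soon as a nonpositive supersolution is available at some positive parameter; Lemma \ref{c2} supplies such a supersolution at $\lambda=\widetilde\lambda$ for $k$ small. Theorem \ref{solucoesnegativas} then produces a threshold $\underline\lambda=:\lambda_2\leq\widetilde\lambda$ with no nonpositive solution below it (iii)(c), a unique nonpositive solution at $\lambda_2$ (iii)(b), and, for $\lambda>\lambda_2$, two solutions with $u_{\lambda,1}\ll 0$ and $\min_{\overline\Omega}u_{\lambda,2}<0$ (iii)(a), the monotonicity $u_{\lambda_1,1}\gg u_{\lambda_2,1}$ being inherited. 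The identity again gives $\lambda_2>\gamma_1$, so $\lambda_2\in(\gamma_1,\widetilde\lambda]$.

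The main obstacle is the single calibration of $\widetilde k$. Both base states must be produced by one and the same $k$: the positive solution $u_0$ at $\lambda=0$ (so that Theorem \ref{teo5.2} launches the lower branch and $\lambda_1>0$) and the nonpositive supersolution at $\lambda=\widetilde\lambda$ (so that Theorem \ref{solucoesnegativas} caps the upper branch at $\lambda_2\leq\widetilde\lambda$). The delicate point is this smallness threshold: as $k$ grows the positive base state degenerates and the lower fold $\lambda_1$ is squeezed toward $0$, while the nonpositive supersolution at $\widetilde\lambda$ may be lost. Quantifying how small $k$ must be for both to coexist—so that the whole diagram $\lambda_1<\gamma_1<\lambda_2\leq\widetilde\lambda$ holds for every $k\in(0,\widetilde k]$—is where the a priori bounds underlying Theorems \ref{teo5.2} and \ref{solucoesnegativas} must be used quantitatively rather than merely qualitatively.
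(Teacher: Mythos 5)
Your overall architecture coincides with the paper's: the $\varphi_1$--test to confine nonnegative solutions to $\lambda<\gamma_1$ and nonpositive ones to $\lambda>\gamma_1$, Theorem \ref{teo5.2} launched from a positive solution of $(P_{0,k})$ for part (i), and Theorem \ref{solucoesnegativas} launched from the nonpositive strict supersolution of Step 3 of Lemma \ref{c2} for part (iii). However, the step you call the ``structural fact driving everything'' is false. What is nonnegative in \eqref{P3} is the \emph{added} term $k\widetilde{c}$ alone \textemdash\ that is why its solutions are supersolutions of \eqref{$P_lambda$} \textemdash\ not the full datum $f=h+k\widetilde{c}$. From \eqref{5.5},
\begin{align*}
h+k\widetilde{c}\;=\;(k-1)h^-+k(\Lambda_2C_0+B)c^++k\widetilde{M}c^-,
\end{align*}
which is strictly negative on $\{h^->0\}\cap\{c=0\}$ for every $k<1$, and $\int_\Omega (h+k\widetilde{c})\varphi_1\to\int_\Omega h\varphi_1<0$ as $k\to0^+$. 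Since the corollary quantifies over \emph{all} $k\in(0,\widetilde{k}]$, your inequality $\int_\Omega f\varphi_1>0$ fails exactly in the regime at stake. Everything you hang on it \textemdash\ part (ii), both strict separations $\lambda_1<\gamma_1<\lambda_2$, and the nonexistence of nonpositive solutions of $(P_{0,k})$ that you need before Theorem \ref{solucoesnegativas} can be invoked \textemdash\ is therefore unsupported as written. The paper performs the same test-function computation but does not argue through a sign of $f$; to salvage your identity you would need a different source of strict positivity on its left side (e.g.\ $\int\varphi_1(MDu,Du)\ge\mu_1\int\varphi_1|Du|^2$, treating $Du\equiv0$ separately), and the nonexistence of nonpositive solutions at $\lambda=0$ is better obtained from uniqueness for the coercive problem together with positivity of its solution.

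The second gap is the provenance of the positive base state. Lemma \ref{c2} does not produce a positive solution of $(P_{0,k})$: it is stated and proved under the hypothesis that $(P_0)$ has a nonpositive solution with $c^+u_0\lneqq0$, and its conclusions concern the solution count of \eqref{P3} according to $k\lessgtr 1$ and the existence of a \emph{negative} strict supersolution. The paper obtains $u_0\gg0$ for $(P_{0,k})$, $k\in(0,\widetilde{k}]$, from the external existence result \cite[Theorem 1.1]{ACJT} after decreasing $\widetilde{k}$; this input (or an equivalent substitute) is genuinely needed and is missing from your proposal. Once these two points are repaired, the remaining steps \textemdash\ propagating the two-solution range up to the fold $\overline{\lambda}$ inside Theorem \ref{teo5.2}, and reading (iii) off Theorem \ref{solucoesnegativas} with $\lambda_2=\underline{\lambda}$ \textemdash\ do match the paper's argument.
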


Note that  Theorems \ref{teo5.2} and \ref{teo5.3} require to the problem $(P_0)$ to have a solution, thus we are in the situation  that a branch of solutions emanates from $(0, u_0)$. Our next results consider the alternative situation when problem $(P_0)$ does not have a solution, but there exists a non-positive supersolution to problem (\ref{$P_lambda$}) for some $\lambda_0 > 0$.
\begin{theorem}\label{solucoesnegativas}
	Assume that $(P_0)$ does not have a solution $u_0 \leq 0$ and that there exist $\lambda_0>0$ and $\beta_0\leq 0$ a supersolution to $(P_{\lambda_0})$.
	Then, there exists $0 <\underline{\lambda} \leq \lambda_0$ such that
	\begin{itemize}
		\item[(i)] for every $\lambda\in (\underline{\lambda},\infty)$,  (\ref{$P_lambda$}) has at least two solutions with $u_{\lambda,1}\leq 0$ and $u_{\lambda,1}\leq u_{\lambda,2}$.
		Moreover, if $\lambda_1 < \lambda_2$, we have $u_{\lambda_1,1}\gg u_{\lambda_2,1}$;
		\item[(ii)]  $(P_{\underline{\lambda}})$ has a unique solution $u_{\underline{\lambda}} \leq 0$;
		\item[(iii)] for $\lambda < \underline{\lambda}$,  $(P_{\lambda})$  has no solution $u\leq 0$.
	\end{itemize}
	Furthermore, for every $\lambda<0$ problem \eqref{$P_lambda$} has at most one non-positive solution $u_\lambda$, there exists an unbounded continuum $\mathcal{C} \subset \Sigma$ and $\lambda=0$ is a bifurcation point from infinity.
\end{theorem}
\begin{figure}[h!]	
	\centering
	\includegraphics[width=0.5\linewidth]{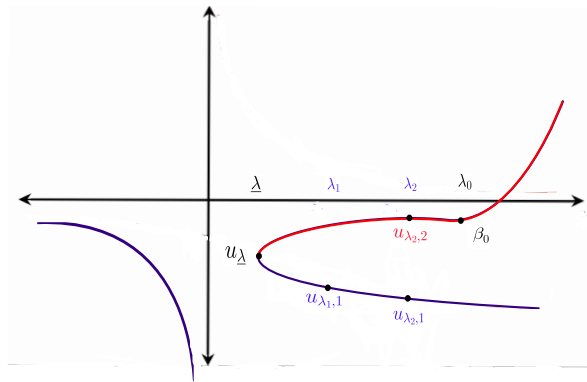}
	\caption*{\\ Illustration of Theorem 1.3}
	\label{fig: Illustration of Theorem 1.3}
\end{figure}
In the particular, but important, case $h(x)\equiv 0 $, we have the following result. Further considerations about the cases when  $h(x)$ has a sign are given in Section 4.
\begin{theorem}\label{hequiv0}
	Under assumption \eqref{A} with $h(x)\equiv 0$ and $\gamma_1>0$ 	is the first eigenvalue of \eqref{eig1}. Then
	\begin{itemize}
		\item[(i)] for all $\lambda\in (0,\gamma_1)$, the problem
		\begin{align}\label{h=0}\tag{$P_{h\equiv 0}$}\begin{cases}
			-\divi(A(x)Du)&=c_\lambda(x)u+(M(x)Du,Du) \qquad \mbox{ in }\Omega\\
		 \qquad\qquad\qquad u&= 0\qquad\qquad\quad \qquad\qquad\qquad \mbox{ on } \partial\Omega
            \end{cases}
		\end{align}
		has at least two solutions $u_{\lambda,1}\equiv 0$ and $u_{\lambda,2}\gneqq 0$; 
		\item[(ii)] for $\lambda=\gamma_1$, (\ref{h=0})  has only the trivial solution;
		\item[(iii)] for $\lambda >\gamma_1$,  (\ref{h=0})  has at least two solutions $u_{\lambda,1}\equiv 0$ and $u_{\lambda,2}\leq 0$;
		\item[(iv)] for all $\lambda\leq 0$, (\ref{h=0}) has a unique solution $u_\lambda\equiv 0$;
		\item[(v)] There exists a continuum $\mathcal{C}\subset\Sigma$ that bifurcates from infinity to the right of the axis $\lambda=0$ and whose projection on the $\lambda$-axis is an unbounded interval $(0,+\infty)$.
	\end{itemize}
\end{theorem}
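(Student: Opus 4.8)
The plan is to regard $h\equiv 0$ as the degenerate endpoint of the theory built for Theorems~\ref{teo5.2}--\ref{solucoesnegativas}, in which the trivial function is always a solution and the first eigenvalue $\gamma_1$ of \eqref{eig1} serves as the exact separating value. The starting observation is that, since $h\equiv 0$, the weak formulation is satisfied identically by $u\equiv 0$, so $u\equiv 0$ solves \eqref{h=0} for every $\lambda\in\mathbb{R}$; this already supplies the branch $u_{\lambda,1}\equiv 0$ in (i), (iii) and (iv). For (iv) I would then use coercivity: for $\lambda\le 0$ one has $c_\lambda=\lambda c^+-c^-\le 0$, placing \eqref{h=0} in the coercive regime in which the solution is unique (as in Theorem~\ref{teo5.2}(i) and \cite{ACJTuni}); since $u\equiv 0$ is a solution, it is the only one.

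For the threshold (ii) I would test the equation against the principal eigenfunction $\varphi_1$ and exploit the symmetry of $A$. If $u$ is any solution at $\lambda=\gamma_1$, using $\varphi_1$ as a test function in \eqref{h=0} and $u$ as a test function in \eqref{eig1} and subtracting, the linear terms cancel and one is left with $\int_\Omega \varphi_1\,(M(x)Du,Du)=0$; since $\varphi_1>0$ in $\Omega$ and $(M(x)Du,Du)\ge\mu_1|Du|^2\ge 0$, this forces $Du\equiv 0$, hence $u\equiv 0$ by the boundary condition. The same mutual-testing/comparison idea fixes the signs in (i) and (iii): a nontrivial $u\gneqq 0$ makes $-\divi(A(x)Du)-c_\lambda u=(M(x)Du,Du)\ge 0$ a positive (strict) supersolution of the $\lambda$-eigenvalue problem, which is possible only for $\lambda<\gamma_1$, while a nontrivial $u\lneqq 0$ produces a positive strict subsolution, possible only for $\lambda>\gamma_1$. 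Thus nontrivial solutions are necessarily positive on $(0,\gamma_1)$ and nonpositive on $(\gamma_1,+\infty)$.

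The existence of the nontrivial solutions $u_{\lambda,2}$ in (i) and (iii) and of the continuum in (v) is the core, and here I would follow \cite{MR4030257} and argue by approximation in $h$. For the positive side one perturbs to $h_\delta\gneqq 0$: the coercive problem $(P_0)$ with datum $h_\delta$ then has a solution $u_0^\delta\gneqq 0$ with $c^+u_0^\delta\gneqq 0$, so Theorem~\ref{teo5.2} applies and yields a continuum $\mathcal{C}^\delta\subset\Sigma$ bifurcating from infinity to the right of $\lambda=0$ together with two positive solutions on a right neighbourhood of $0$. For the negative side one perturbs to $h_\delta\lneqq 0$ and invokes Corollary~\ref{coro}, which supplies the corresponding branch of nonpositive solutions for $\lambda>\gamma_1$. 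Letting $\delta\to 0$ and passing to the limit (using the compactness of the solution operator in $C(\overline{\Omega})$) collapses $u_0^\delta\to 0$, merges the two approximating branches across $\gamma_1$, and produces a single closed connected set $\mathcal{C}$ whose projection on the $\lambda$-axis is $(0,+\infty)$, which blows up as $\lambda\to 0^+$ and meets the trivial line at $(\gamma_1,0)$. Reading off signs via the previous paragraph gives $u_{\lambda,2}\gneqq 0$ on $(0,\gamma_1)$ and $u_{\lambda,2}\le 0$ on $(\gamma_1,+\infty)$, completing (i), (iii) and (v).

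I expect the main obstacle to be the a priori bounds needed to pass to the limit and to run the continuation. As stressed in the introduction, the absence of a global sign condition on $c_\lambda$ rules out the estimates of \cite{ACJT,CJ}, so one must obtain $L^\infty$ control of the nontrivial solutions $u_{\lambda,2}$, uniform in $\delta$ and locally uniform in $\lambda$ away from $\lambda=0$, by another route before the limit and the degree argument can close. Showing that the branch blows up exactly as $\lambda\to 0^+$ (bifurcation from infinity) and collapses onto the trivial solution exactly at $\lambda=\gamma_1$, with no spurious loss of connectedness in the limit, is the delicate part; by comparison the coercive uniqueness (iv) and the eigenfunction computation (ii) are routine once $\gamma_1$ is available.
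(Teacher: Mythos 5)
Your treatment of (ii), (iv) and the sign dichotomy is sound and essentially matches the paper: $u\equiv 0$ is always a solution, coercivity gives uniqueness for $\lambda\le 0$, and testing against $\varphi_1$ yields $(\gamma_1-\lambda)\int_\Omega c^+u\varphi_1=\int_\Omega(M(x)Du,Du)\varphi_1>0$ for nontrivial $u$, which kills $\lambda=\gamma_1$ and forces the signs in (i) and (iii). The core of the proof, however, is where your route diverges and where it has genuine gaps. For existence of $u_{\lambda,2}$ the paper does \emph{not} perturb $h$: for each fixed $\lambda<\gamma_1$ it builds an explicit strict supersolution $\beta\gg 0$ of \eqref{h=0} directly, taking $\widetilde{\beta}=\varepsilon\varphi_1$ with $\varepsilon$ small enough that $\lambda\nu_2^{-1}(1+\nu_2 v)\ln(1+\nu_2 v)\le\gamma_1 v$ on $[0,\varepsilon]$ and then setting $\beta=\nu_2^{-1}\ln(1+\nu_2\widetilde{\beta})$ via Lemma \ref{exponentialchange}; for $\lambda>\gamma_1$ it gets a nonpositive supersolution from the anti-maximum principle (Lemma \ref{antimax}) applied to $-\divi(A(x)Du)=c_{\lambda_0}(x)u+1$. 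This yields the second solution on the \emph{whole} intervals $(0,\gamma_1)$ and $(\gamma_1,\infty)$. Your approximation $h_\delta\gneqq 0$ plus Theorem \ref{teo5.2}(iii) only gives two solutions on some interval $(0,\lambda_0^\delta)$ with no control that $\lambda_0^\delta$ reaches $\gamma_1$, and nothing prevents both solutions from collapsing onto the trivial one as $\delta\to 0$ (the minimal solution $\ge u_0^\delta$ certainly tends to $0$); so statement (i) on all of $(0,\gamma_1)$ is not established.

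For (v) the decisive idea in the paper is also missing from your plan: one glues the two fixed-point operators, $T_\lambda=\overline{T}_\lambda$ for $\lambda\le\gamma_1$ and $T_\lambda=\widehat{T}_\lambda$ for $\lambda>\gamma_1$ (both with $h\equiv 0$), and applies Theorem \ref{continuum} at the base point $\lambda_0=\gamma_1$, where by (ii) the trivial solution is the \emph{unique} solution and has index $1$; the two unbounded components then project onto $(0,\gamma_1]$ and $[\gamma_1,+\infty)$ and the a priori bounds of Theorem \ref{6.3} force bifurcation from infinity at $\lambda=0^+$. Your proposal to obtain $\mathcal{C}$ as a limit of continua $\mathcal{C}^\delta$ as $\delta\to 0$ is not workable as stated: connectedness is not preserved under such limits without a Whyburn-type argument with uniform compactness, and, worse, near $\lambda=\gamma_1$ neither family of perturbed problems supplies the needed solutions --- for $h_\delta\gneqq 0$ there are no nonnegative solutions at all for $\lambda\ge\gamma_1$, while for $h_\delta\lneqq 0$ the nonpositive branch only starts at some $\lambda_2>\gamma_1$ (Corollary \ref{coro}, which moreover concerns the auxiliary problem \eqref{P3} rather than \eqref{$P_lambda$} itself). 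The crossing of the two branches exactly at $(\gamma_1,0)$ is therefore precisely the point your argument leaves unproved, and it is the point the paper's index computation at $\gamma_1$ is designed to settle.
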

	\begin{figure}
	\centering
	\begin{subfigure}{.49\textwidth}
		\centering
	\includegraphics[width=1.0\linewidth]{./corollary}
	\caption*{\\ Illustration of Corollary 1.4}
	\label{fig:corollary}
	\end{subfigure}
\begin{subfigure}{.49\textwidth}
	\centering
	\includegraphics[width=1.0\linewidth]{./fig4}
	\caption*{Illustration of Theorem 1.5}
	\label{fig: Illustration of Theorem 1.5}
	\end{subfigure}
\end{figure}
The proof of our results relies crucially on the Boundary Weak Harnack Inequality for uniformly elliptic equations in divergence form, as established in \cite{RSS}. Specifically, Lemma 3.2 demonstrates that controlling the behavior of solutions on $\Omega_{c^+}$ is sufficient for our analysis. For this purpose, given $\bar{x} \in \Omega_{c^+}$, we perform a local analysis:
\begin{itemize}
    \item In a ball $B(\bar{x}, r) \subset \Omega$ if $\bar{x} \in \Omega_{c^+} \cap \Omega$ (interior case),
    \item In a semi-ball $B^+(\bar{x}, r)$ if $\bar{x} \in \Omega_{c^+} \cap \partial\Omega$ (boundary case).
\end{itemize}

The results presented in this work not only provide a novel approach to boundary analysis but also extend the existing theoretical framework.
It is also important to mention that, the existence statements in Theorems 1.1--1.5 are established through an auxiliary fixed point problem constructed using degree theory. This approach is based on the work  \cite{MR4030257}, where the authors developed a fixed point framework for the $p$-Laplacian with $c^+(x)u_0 \gneqq 0$. Our Theorem 1.1 generalizes these arguments to our setting. On the other hand,
for Theorem 1.2, where $c^+(x)u_0 \lneqq 0$, we introduce a new fixed point problem tailored to this case. Furthermore, we show that it is possible to obtain a more precise  characterization of the solution set $\Sigma$, particularly when the sign of $u_0$ is known.

This paper is organized as follows. Section 2 presents auxiliary results, which are fundamental for the construction of our arguments. In Section 3 we derive a priori bounds for the solutions of problem \eqref{$P_lambda$}. Finally, in Section 4 we prove our main results. 
\section{Auxiliary Results}\label{preresults}

\quad \ The Strong Maximum Principle is extremely important in our approach. As stated below, it guarantees that a non-negative supersolution to an elliptic equation in a domain cannot vanish inside the domain, unless it vanishes identically.

\begin{theorem}[{\bf{Strong Maximum Principle - SMP}}]\label{SMP}
	Let  $\Omega\subset \mathbb{R}^n$ be a domain. If $u$ satisfies
	\begin{align*}
		\left\{
		\begin{array}{rll}
			-\divi(A(x)Du)-\mu_1 |Du|^2-c_\lambda(x)u-h(x)&\geq 0 &\mbox{ in }\Omega\\
			u &\geq 0 &\mbox{ in }\Omega\\
		\end{array}
		\right.
	\end{align*}
	then either $u>0$ in $\Omega$ or $u\equiv 0$ in $\Omega$.
\end{theorem}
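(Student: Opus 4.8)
The plan is to strip the quasilinear inequality of the terms that already carry a favourable sign, thereby reducing it to a \emph{linear} divergence-form supersolution inequality, and then to invoke the weak Harnack inequality for nonnegative supersolutions of uniformly elliptic operators with merely bounded measurable leading coefficients. The only genuinely nonlinear feature, the quadratic gradient term $\mu_1|Du|^2$, is nonnegative, so it is harmless once the inequality is tested against nonnegative functions.

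First I would read the hypothesis in weak form: for every $0\leq\varphi\in C_0^\infty(\Omega)$,
\[
\int_\Omega (A\,Du,D\varphi)\ \geq\ \int_\Omega\big(\mu_1|Du|^2+c_\lambda u+h\big)\varphi .
\]
Since $\varphi\geq 0$ and $\mu_1|Du|^2\geq 0$ pointwise, and since the remaining inhomogeneous contribution enters with the correct sign, the gradient and inhomogeneous terms may be discarded from the right-hand side, leaving
\[
\int_\Omega (A\,Du,D\varphi)\ \geq\ \int_\Omega c_\lambda u\,\varphi\qquad\text{for all }0\leq\varphi\in C_0^\infty(\Omega).
\]
This states exactly that $u\geq 0$ is a weak supersolution of the \emph{linear} operator $-\divi(A(x)Du)-c_\lambda(x)u=0$, whose leading part satisfies $\vartheta I_n\leq A\leq\vartheta^{-1}I_n$ and whose zeroth-order coefficient $c_\lambda$ lies in $L^p(\Omega)$ with $p>n>n/2$.

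Second, I would apply the weak Harnack inequality from De Giorgi--Nash--Moser theory to this supersolution. For any ball with $B_{2r}\subset\Omega$ it produces a constant $C>0$ and an exponent $s>0$ with
\[
\Big(\tfrac{1}{|B_{2r}|}\int_{B_{2r}}u^{s}\,dx\Big)^{1/s}\ \leq\ C\,\inf_{B_r}u .
\]
If $u(\bar x)=0$ for some $\bar x\in\Omega$, then $\inf_{B_r}u=0$ on a small ball about $\bar x$ (using the interior continuity granted by the same theory), whence $\int_{B_{2r}}u^{s}=0$ and $u\equiv 0$ on $B_{2r}$. Thus $\{x\in\Omega:u(x)=0\}$ is open; being also closed in $\Omega$ by continuity, connectedness of the domain forces it to be empty or all of $\Omega$. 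This is precisely the dichotomy $u>0$ in $\Omega$ or $u\equiv 0$ in $\Omega$.

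I expect the real work to lie in two places. The first is the rigorous Sobolev-level manipulation: justifying that $\mu_1|Du|^2$ (only an $L^1$ quantity, as $u\in H_0^1\cap L^\infty$) can be legitimately dropped in the weak inequality while preserving it for \emph{all} nonnegative test functions, which forces one to keep careful track of the sign of the inhomogeneity $h$ and of the admissible test class. The second is invoking the weak Harnack estimate with only bounded measurable $A$, so that no Hopf-type argument with smooth coefficients is available, and with the unbounded-but-$L^p$ coefficient $c_\lambda$, which is absorbed precisely because $p>n/2$. If one prefers to treat the gradient term symmetrically rather than merely discarding it, an equivalent route is the Cole--Hopf-type substitution $v=\tfrac{1}{\mu_1}\big(1-e^{-\mu_1 u}\big)$, which is increasing with $v(0)=0$ and preserves the sign structure ($v>0\Leftrightarrow u>0$, $v\equiv 0\Leftrightarrow u\equiv 0$); a direct computation converts the quadratic term into a nonnegative remainder and again yields a linear supersolution inequality for $v$, to which the same weak Harnack argument applies.
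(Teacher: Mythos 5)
Your proposal is correct and follows essentially the same route as the paper, which offers no separate proof but simply remarks that the SMP is an immediate consequence of the Interior Weak Harnack Inequality (citing \cite{MR4030257} and \cite[Theorems 3.5, 8.18]{GT}); your reduction to a linear divergence-form supersolution followed by the weak Harnack estimate and the open-closed connectedness argument is exactly the standard way of making that remark precise. The one caveat is your claim that the inhomogeneity $h$ ``enters with the correct sign'': the theorem as stated imposes no sign on $h$, so either $h\geq 0$ must be read into the hypothesis (as it effectively is in every application in the paper) or $h$ must be retained on the right-hand side, since otherwise the reduction\textemdash and indeed the statement itself\textemdash fails.
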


We observe that the SMP is an immediate consequence of the Interior Weak Harnack Inequality (IWHI). For more details and the proof of this theorem, we refer to \cite[Theorem 3.5, Theorem 8.18]{GT}.
On the other hand, the next theorem is a generalization of the IWHI up to the boundary. 
In order to enunciate it, we need some definitions.

	A function $\sigma:[0,1]\to \mathbb{R}_+$ is a {\bf Dini function} (write $\sigma \in \mathcal{D}$)  if

$(i) \ \sigma(0)=0< \sigma(t)/2 \leq \sigma(s)\leq \sigma(t)$ for $0< t/2 
\leq s\leq t$;

$(ii) \ \sigma(\tau)/\tau$ is non-increasing and $\int_0^s \frac{\sigma(\tau)}{\tau} d\tau < +\infty$.

We say $\psi:\Omega \to \mathbb{R}$ is a { \bf Dini continuous} function in $\Omega$ and write ${\psi \in \mathcal{C}^{0,\mathcal{D}ini}(\overline\Omega)}$ if there exists some $\sigma_\psi \in \mathcal{D}$ such that 
\[	|\psi(x)-\psi(y)|\leq \sigma_\psi(|x-y|) \quad \text{for \ all} \quad x, y \in \overline{\Omega}.
\]
Then,  $\Omega$ is a  $\mathcal{C}^{1, \mathcal{D}ini}$ domain if, locally, 
{  $\partial \Omega$ can be seen 
	as the graph of a $\mathcal{C}^1$-function}, whose derivatives are of class   $\mathcal{C}^{0,\mathcal{D}ini}$.

Setting $B_R^+ = B_R \cap \Omega$, we say a function $\psi$ has { \bf Dini mean oscillation on $\Omega$} and write { \bf ${\psi \in \mathcal{C}^{0,m\mathcal{D}ini}(\overline{\Omega})}$}  if there exists  $\sigma_{m,\psi} \in \mathcal{D}$ such that 
\[
\dashint_{B^+_R(x)} |\psi(y)-\dashint_{B^+_R(x)}\psi(z)dz|dy\leq \sigma_{m,\psi}(R) \ \mbox{ for \ every } \ R>0, \ x \in \overline{\Omega}.
\]				

If $\psi \in \mathcal{C}^{0,m\mathcal{D}ini}(\overline\Omega)$ and $\Omega$ is a $\mathcal{C}^1$ domain, then $\psi$ is uniformly continuous in $\Omega$, with a modulus
of continuity $\omega_\psi(r)$ dominated by $\int_0^r \frac{\sigma_m(\tau)}{\tau} d\tau.$ However, we remark that $\sigma_{m,\psi}(r) \leq \sigma_{\psi}(r)$, so the Dini mean oscillation is a weaker hypothesis
than Dini continuity.

Now, we enunciate the Boundary Weak Harnack Inequality. Such a result is the core of our arguments in order to describe the solutions obtained to the class of problems \eqref{$P_lambda$}. Its proof can be found in \cite[Theorem 4.7]{fio}, see also \cite[Theorem 1.1]{RSS} for a more general version. 
\begin{theorem}[{\bf{Boundary Weak Harnack Inequality, BWHI}}]\label{Dini}\label{bwhi} We assume that
	\begin{itemize}
		\item[(H1)] $A(x)\in L^\infty(\Omega)$, $\lambda I\leq A(x)\leq \Lambda I$ for some $0< \lambda\leq \Lambda$;
		\item[(H2)] $b \in L^q_{loc}(\Omega)$ for some $q>n$, $c,f \in L^p_{loc}(\Omega)$ for some $p>n/2$;
	\end{itemize}
	We set $d(x) = dist(x,\partial\Omega)$, 
	$\Omega_{d_0} =\{x \in\Omega
	: d(x) < d_0\}$, and assume that
	\begin{itemize}
		\item[(H3)]  $\Omega$ is $C^{1,\mathcal{D}ini}$ domain, the coefficients of $A$ have Dini mean oscillation in 
		$\Omega_{d_0}$, for some  $d_0 > 0$,
 $u \geq 0 $ in $\Omega$ and satisfies
	\[ -\divi(A(x)Du)+b(x)Du+c(x)u \geq f \quad \mbox{ in } \Omega.\]
\end{itemize}
	 Then there exist $\varepsilon>0$ depending on $n, \lambda, \Lambda,p, q, \sigma_{m,A} $ and $C>0$  depending on $n, \lambda, \Lambda, p,q, \sigma_{m,A}$, $\|b\|_{L^q(\Omega)},$ $\|c\|_{L^p(\Omega)}$ and $\partial \Omega$, such that
	\begin{align}\label{IWHIloc1}
		\left(\int_{\Omega} \left(\frac{u}{d}\right)^\varepsilon\right)^{1/\varepsilon}\le C\left(\inf_{B_R^+} \frac{u}{d} + \|f\|_{L^p(\Omega)}\right).
	\end{align}	
\end{theorem}

Before stating the next auxiliary result, we denote by $C(\overline{\Omega})$ the real Banach space of continuous functions defined over $\overline{\Omega}$ and let ${T:\mathbb{R}\times C(\overline{\Omega})\rightarrow C(\overline{\Omega})}$ be a completely continuous map, i.e. it is continuous and maps bounded sets to relatively compact sets. For the purposes of this paper, we consider the problem 
\begin{equation}\label{Q}
	u \in C(\overline{\Omega});\quad \Phi(\lambda,u):=u-T(\lambda,u)=0,
\end{equation}
of finding the zeroes of $\Phi(\lambda,u):=u-T(\lambda,u)$, for each fixed $\lambda\in\mathbb{R}$. Let $\lambda_0\in\mathbb{R}$ be arbitrary but fixed and assume that $u_{\lambda_0}$ is an isolated solution to $\Phi(\lambda_0,u)$, then the degree $\deg(\Phi(\lambda_0,.),B(u_{\lambda_0},r),0)$ is well defined and it is constant for $ r>0$ small enough. Thus, it is possible to define the index
\[
	i(\Phi(\lambda_0,.),u_{\lambda_0}):=\lim_{r\rightarrow 0}\deg(\Phi(\lambda_0,.),B(u_{\lambda_0},r),0). 
\]
Now we are able to enunciate the following theorem, which was proved in \cite[Theorem 2.2]{ACJT}.
\begin{theorem}\label{continuum}
	If (\ref{Q}) has a unique solution $u_{\lambda_0}$, and $i(\Phi(\lambda_0,.),u_{\lambda_0})\ne 0$ then $\Sigma$ possesses two unbounded components $\mathcal{C}^+$ and $\mathcal{C}^-$ in $[\lambda_0,+\infty]\times C(\overline{\Omega})$ and $[-\infty,\lambda_0]\times C(\overline{\Omega})$, respectively, which meet at $(\lambda_0,u_{\lambda_0})$.
\end{theorem}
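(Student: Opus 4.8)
The plan is to read this as a Leray--Schauder global continuation statement and to argue entirely by degree theory together with a topological separation lemma, using the two structural hypotheses: the condition $i(\Phi(\lambda_0,\cdot),u_{\lambda_0})\ne 0$ supplies a nonzero degree at the base point, while complete continuity of $T$ forces the relevant pieces of $\Sigma$ to be compact. First I would record that $\Sigma=\Phi^{-1}(0)$ is closed and that, because $T$ maps bounded sets to relatively compact sets, every bounded closed subset of $\Sigma$ is in fact compact: if $(\lambda_n,u_n)\in\Sigma$ stays bounded then $u_n=T(\lambda_n,u_n)$ has a convergent subsequence, whose limit lies in $\Sigma$ by continuity. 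Thus $\Sigma$ is locally compact. Set $\Sigma^+:=\Sigma\cap([\lambda_0,\infty)\times C(\overline{\Omega}))$ and $\Sigma^-:=\Sigma\cap((-\infty,\lambda_0]\times C(\overline{\Omega}))$, and let $\mathcal{C}^+$, $\mathcal{C}^-$ be the connected components of $\Sigma^+$, $\Sigma^-$ containing $(\lambda_0,u_{\lambda_0})$. Both contain the base point, so they meet there, and it remains to show each is unbounded.

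I would prove unboundedness of $\mathcal{C}^+$ by contradiction, the case of $\mathcal{C}^-$ being symmetric. Suppose $\mathcal{C}^+$ is bounded; then it is compact by the remark above. The key topological input is Whyburn's separation lemma: a compact component of a locally compact set can be isolated, i.e. there exists a bounded set $\mathcal{O}\subset[\lambda_0,\infty)\times C(\overline{\Omega})$, relatively open, with $\mathcal{C}^+\subset\mathcal{O}$ and $\Sigma^+\cap\partial\mathcal{O}=\emptyset$. Concretely, one intersects $\Sigma^+$ with a large closed ball containing $\mathcal{C}^+$ in its interior to obtain a compact ambient space $X$, and then applies the dichotomy ``either a connected subset joins $\mathcal{C}^+$ to the outer sphere, or the two are separated by a clopen partition of $X$''; since $\mathcal{C}^+$ is a maximal connected subset of $\Sigma^+$, no joining set exists, and the clopen piece containing $\mathcal{C}^+$ yields the desired $\mathcal{O}$.

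With $\mathcal{O}$ in hand I would slice it: for $\lambda\ge\lambda_0$ set $\mathcal{O}_\lambda:=\{u:(\lambda,u)\in\mathcal{O}\}$, a bounded open subset of $C(\overline{\Omega})$. The condition $\Sigma^+\cap\partial\mathcal{O}=\emptyset$ guarantees that $\Phi(\lambda,u)\ne 0$ whenever $u\in\partial\mathcal{O}_\lambda$, since a boundary point of a slice lies on $\partial\mathcal{O}$. Hence the Leray--Schauder degree $\deg(\Phi(\lambda,\cdot),\mathcal{O}_\lambda,0)$ is defined for every $\lambda\ge\lambda_0$, and by the generalized homotopy invariance along the tube it is independent of $\lambda$. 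At $\lambda=\lambda_0$, hypothesis (i) says $u_{\lambda_0}$ is the only zero of $\Phi(\lambda_0,\cdot)$, so by excision
\[
\deg(\Phi(\lambda_0,\cdot),\mathcal{O}_{\lambda_0},0)=i(\Phi(\lambda_0,\cdot),u_{\lambda_0})\ne 0.
\]
On the other hand $\mathcal{O}$ is bounded, so $\mathcal{O}_\lambda=\emptyset$ for all $\lambda$ larger than the finite supremum of its $\lambda$-projection, and there the degree is $0$. This contradicts the constancy, forcing $\mathcal{C}^+$ to be unbounded; the identical argument on $(-\infty,\lambda_0]$ gives $\mathcal{C}^-$ unbounded.

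I expect the main obstacle to be the topological step rather than the degree computation: making the separation lemma rigorous (ensuring $\mathcal{C}^+$ really is a component of a genuinely compact ambient space, and that the isolating neighborhood $\mathcal{O}$ can be taken bounded with solution-free boundary) requires care, as does the passage from ``no solutions on $\partial\mathcal{O}$'' to ``no solutions on each $\partial\mathcal{O}_\lambda$,'' which underlies the applicability of the generalized homotopy invariance. Everything else --- local compactness, excision, and the reduction to an empty slice for large $\lambda$ --- is routine once these two points are secured.
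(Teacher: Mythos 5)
Your argument is correct and is essentially the standard proof of this continuation theorem; note that the paper does not prove Theorem \ref{continuum} itself but imports it from \cite[Theorem 2.2]{ACJT}, whose proof follows the same scheme you describe. That is: local compactness of $\Sigma$ from the complete continuity of $T$, Whyburn's separation lemma to enclose a hypothetically bounded component in a bounded relatively open tube $\mathcal{O}$ with solution-free boundary, and the generalized homotopy invariance of the Leray--Schauder degree along the slices $\mathcal{O}_\lambda$, which equals $i(\Phi(\lambda_0,\cdot),u_{\lambda_0})\ne 0$ at $\lambda_0$ (by uniqueness and excision) and $0$ on the empty slices for large $|\lambda|$, giving the contradiction.
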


A fundamental tool for establishing the existence and behavior of solutions, using the lower and upper solutions approach, is the following theorem. This result is analogous to \cite[Theorem 2.1]{CJ}, originally proved for the Laplacian operator. 
\begin{theorem}\label{existence}
Let  $\Omega$ is a bounded domain in $\R^n$ with boundary $\partial\Omega$ of class $C^{1, \mathcal{Dini}}$ and $f$ be an $L^p$-Carath\'{e}odory function with $p > n$. Assume that there exists a lower solution $\alpha$ and an upper solution $\beta$ of \eqref{$P_lambda$} such that $\alpha\leq\beta$.
Denote $\alpha := \max \{\alpha_i| 1\leq i\leq k \}$ where $\alpha_1,\cdots,\alpha_k$ are regular lower solutions to problem
\eqref{$P_lambda$} and $\beta := \min \{\beta_j| 1\leq j\leq l \}$ where $\beta_1,\cdots, \beta_l$ are regular upper solutions
of \eqref{$P_lambda$}. If there exists $K > 0$ and $h\in L^p(\Omega)$ such that for a.e. $x \in\Omega$, all $\xi\in\R^n$,
\[
|f(,u,\xi)|\leq h(x)+K|\xi|^2,
\]
then the problem \eqref{$P_lambda$} has at least one solution $u$ satisfying $\alpha \leq u\leq \beta$.
Moreover, problem \eqref{$P_lambda$} has a minimal solution $u_{min}$ and a maximal solution $u_{max}$ in the sense that, $u_{min}$ and $u_{max}$ are solution of \eqref{$P_lambda$} with $\alpha\leq u_{min}\leq u_{max}\leq \beta$ and every solution $u$ of \eqref{$P_lambda$} with $\alpha\leq u\leq \beta$ satisfies $u_{min}\leq u \leq u_{max}$.
If moreover $\alpha$ and $\beta$ are strict and satisfy $\alpha\ll \beta$, then there exists $R>0$ such that
\[
\deg(\mathcal{I-M,S})=1
\]
where 
\[
\mathcal{S}=\{ u\in C_0^1(\overline{\Omega})| \alpha\ll u \ll \beta , \|u\|_{C^1}<R\}.
\]
\end{theorem}
Notice that, in order to assign a degree to a pair of lower and upper solutions, it is necessary to consider a stronger class of functions.
We recall that a strict subsolution  to \eqref{$P_lambda$} is a subsolution $\alpha$ such that for every solution $u$ to  \eqref{$P_lambda$} satisfying $\alpha \leq u$ it follows that $\alpha\ll u$, namely, $\alpha(x)< u(x)$ for every $x\in \Omega$. As well as, a strict supersolution to \eqref{$P_lambda$} is a supersolution $\beta$ such that $u\ll \beta$, for every solution $u$ to \eqref{$P_lambda$} satisfying $u\leq\beta$.

In order to consider the situation where ($P_{\lambda_0}$) has a supersolution, we need the following formulation of the Anti-maximum Principle. This result was established in \cite[Theorem 1]{MR633824}, see also \cite[Proposition 2.6]{CJ} which presents a particular case of this theorem for the Laplacian operator.

\begin{lemma}\label{antimax}
	Let $\overline{c}, \overline{h}, \overline{d}\in L^p(\Omega)$ with $p>n$ and assume $\overline{h}\gneqq 0$. We denote by $\overline{\gamma}_1>0$ the first eigenvalue of
	\[
		-\divi(A(x)Du)+\overline{d}(x)u=\overline{c}_{\overline{\gamma}_1}(x)u, \mbox{ } u\in H^1_0(\Omega).
	\]
	Then there exists $\varepsilon_0>0$ such that, for all $\lambda\in(\overline{\gamma}_1,\overline{\gamma}_1+\varepsilon_0)$, the solution $v\in H^1_0(\Omega)$ of
	\begin{align*}
		-\divi(A(x)Dv)+\overline{d}(x)v=\overline{c}_{\lambda}(x)v+\overline{h}(x), \quad \mbox{	satisfies \ } v\ll 0.
	\end{align*}
\end{lemma}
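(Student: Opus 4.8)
The plan is to adapt the classical compactness-and-contradiction proof of the anti-maximum principle to the present low-regularity framework, the only genuinely new ingredient being the strict positivity up to $\partial\Omega$ furnished by the Boundary Weak Harnack Inequality (Theorem~\ref{teoBWHI}). Write $Lu:=-\divi(A(x)Du)+\overline{d}(x)u$. At $\lambda=\overline{\gamma}_1$ the problem $Lu=\overline{c}_{\overline{\gamma}_1}u$ admits a principal eigenfunction $\varphi_1\gg0$, with $\overline{\gamma}_1$ simple and isolated; this follows from the Strong Maximum Principle (Theorem~\ref{SMP}) together with a Krein--Rutman argument. Let $\psi_1\gg0$ denote the principal eigenfunction of the adjoint operator $L^\ast$ for the same eigenvalue. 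Since $\overline{\gamma}_1$ is isolated, $L-\overline{c}_\lambda$ is invertible for $\lambda\neq\overline{\gamma}_1$ near $\overline{\gamma}_1$, so the solution $v_\lambda$ exists, is unique, and lies in $C(\overline{\Omega})$ by the regularity available under $\overline{c},\overline{h},\overline{d}\in L^p(\Omega)$, $p>n$.

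Arguing by contradiction, suppose the conclusion fails: there is a sequence $\lambda_n\downarrow\overline{\gamma}_1$ with $v_{\lambda_n}\not\ll0$. First I would show $\|v_{\lambda_n}\|_\infty\to\infty$. Indeed, were $\|v_{\lambda_n}\|_\infty$ bounded, elliptic estimates and compactness would give $v_{\lambda_n}\to v$ solving the limiting problem $Lv=\overline{c}_{\overline{\gamma}_1}v+\overline{h}$; but testing against $\psi_1$ shows the Fredholm compatibility condition $\int_\Omega\overline{h}\,\psi_1=0$ would have to hold, which is impossible since $\overline{h}\gneqq0$ and $\psi_1\gg0$ force $\int_\Omega\overline{h}\,\psi_1>0$.

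Next, set $w_n:=v_{\lambda_n}/\|v_{\lambda_n}\|_\infty$, which solves $Lw_n=\overline{c}_{\lambda_n}w_n+\overline{h}/\|v_{\lambda_n}\|_\infty$. As the right-hand side is bounded, up to a subsequence $w_n\to w$ uniformly with $\|w\|_\infty=1$ and $Lw=\overline{c}_{\overline{\gamma}_1}w$, so $w=\pm\varphi_1$ by simplicity. To fix the sign I would test the equation for $v_{\lambda_n}$ against $\psi_1$; using $L^\ast\psi_1=\overline{c}_{\overline{\gamma}_1}\psi_1$ and $\overline{c}_{\lambda_n}-\overline{c}_{\overline{\gamma}_1}=(\lambda_n-\overline{\gamma}_1)\overline{c}^+$ yields
\[
-(\lambda_n-\overline{\gamma}_1)\int_\Omega\overline{c}^+\,v_{\lambda_n}\,\psi_1=\int_\Omega\overline{h}\,\psi_1>0,
\]
hence $\int_\Omega\overline{c}^+\,w_n\,\psi_1<0$; letting $n\to\infty$ and using $\int_\Omega\overline{c}^+\,\varphi_1\,\psi_1>0$ rules out $w=+\varphi_1$, so $w=-\varphi_1\ll0$.

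The main obstacle is the final step: converting $w_n\to-\varphi_1$ into the strict inequality $v_{\lambda_n}\ll0$ for $n$ large, which encodes not only negativity in $\Omega$ but the correct sign of the boundary behaviour of $v_{\lambda_n}/d$ on $\partial\Omega$. In the classical setting this is immediate from $C^1$ Schauder estimates, but here the coefficients lie only in $L^p$ and $\partial\Omega$ is merely $C^{1,Dini}$, so Schauder theory is unavailable. This is exactly where I would invoke the Boundary Weak Harnack Inequality (Theorem~\ref{teoBWHI}): applied uniformly near the boundary to the sequence $-w_n$, which converges to $\varphi_1\gg0$, it provides a lower bound on $-w_n/d$ on boundary half-balls that persists in the limit, yielding $w_n\ll0$ and therefore $v_{\lambda_n}=\|v_{\lambda_n}\|_\infty w_n\ll0$ for all large $n$. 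This contradicts $v_{\lambda_n}\not\ll0$ and produces the desired $\varepsilon_0>0$, completing the adaptation of the argument of \cite{MR633824} to our hypotheses.
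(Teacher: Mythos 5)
The paper does not actually prove this lemma: it only remarks that the result ``was established in \cite{MR633824}, under slightly smoother data, but its proof may be directly extended under our assumptions.'' Your proposal is therefore more detailed than the paper's own treatment, and its skeleton \mdash\ simplicity and isolation of $\overline{\gamma}_1$, the Fredholm obstruction $\int_\Omega \overline{h}\,\psi_1>0$ forcing $\|v_{\lambda_n}\|_\infty\to\infty$, normalization and convergence of $w_n=v_{\lambda_n}/\|v_{\lambda_n}\|_\infty$ to $\pm\varphi_1$, and the sign selection by testing against the adjoint eigenfunction $\psi_1$ \mdash\ is exactly the Cl\'ement--Peletier/Hess argument the authors have in mind. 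Those steps are correct as written.

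The gap is in your final step. Theorem \ref{teoBWHI} is stated for \emph{nonnegative} (super)solutions, so you cannot apply it to $-w_n$ on a boundary half-ball before knowing that $-w_n\geq 0$ there; and that sign information near $\partial\Omega$ is precisely what is missing, since uniform convergence $w_n\to-\varphi_1$ with $\varphi_1=0$ on $\partial\Omega$ says nothing about the sign of $w_n$ in a boundary layer. As written, the appeal to the BWHI is circular. The standard repair in this low-regularity setting is to apply a boundary growth (Lipschitz) estimate \mdash\ e.g.\ \cite[Lemma 2.14]{fio}, already used in the proof of Theorem \ref{lowerbound} \mdash\ to the difference $z_n:=-w_n-\varphi_1$, which solves $-\divi(A(x)Dz_n)=F_n$ with $F_n:=(\overline{c}_{\lambda_n}-\overline{d})z_n+(\lambda_n-\overline{\gamma}_1)\overline{c}^+\varphi_1-\overline{h}/\|v_{\lambda_n}\|_\infty\to 0$ in $L^p(\Omega)$ (the zero-order term is absorbed into the right-hand side using $\|z_n\|_\infty\to 0$). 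This yields $\sup_\Omega|z_n|/d\to 0$; combined with the Hopf-type bound $\varphi_1\geq c_0\,d$ \mdash\ which is where the BWHI, applied to the genuinely nonnegative function $\varphi_1$, is legitimately used \mdash\ it gives $-w_n\geq (c_0-o(1))\,d\gg 0$, hence $v_{\lambda_n}\ll 0$ for large $n$, the desired contradiction. (A narrow-domain maximum principle plus a barrier near $\partial\Omega$ achieves the same.) With this replacement your argument is complete and consistent with the route the paper intends.
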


\section{A priori Bounds}\label{aprioribound}
\quad \ This section is devoted to the derivation of some a priori bounds results for the solutions to \eqref{$P_lambda$}. Most of our results hold true under more general assumptions than \eqref{A}.
First, we obtain the following essential upper bound on the supersolutions to \eqref{$P_lambda$}, which guarantees that, for $\lambda>0$  in a bounded interval, any unbounded continuum of solutions to \eqref{$P_lambda$} can only bifurcate to the right of $\lambda=0$.
\begin{theorem}[{\bf{A priori Upper Bound}}]\label{6.3}
	Under the stated assumptions of problem \eqref{$P_lambda$}, including hypothesis \eqref{A}, for any $\Lambda_2>\Lambda_1>0$, there exists a constant $\widetilde{M}>0$ such that, for each $\lambda\in[\Lambda_1,\Lambda_2]$, any solution to \eqref{$P_lambda$} satisfies $\displaystyle\sup_{\Omega} u \leq\widetilde{M}$.
\end{theorem}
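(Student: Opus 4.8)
The plan is to remove the quadratic gradient term by a Hopf--Cole change of variable and then treat the resulting problem as a superlinear one, for which the boundary weak Harnack inequality supplies the estimate. Concretely, set $\mu:=\mu_2/\vartheta$ and $v:=\tfrac1\mu(e^{\mu u}-1)$, so that $v$ is an increasing function of $u$ vanishing on $\partial\Omega$ together with $u$. Using $Dv=e^{\mu u}Du$ one computes $-\divi(ADv)=e^{\mu u}(c_\lambda u+h)+e^{\mu u}\big[(MDu,Du)-\mu(ADu,Du)\big]$, and the bracket is $\le(\mu_2-\mu\vartheta)|Du|^2=0$ by the bounds $M\le\mu_2 I_n$ and $(ADu,Du)\ge\vartheta|Du|^2$. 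Hence $v$ is a weak subsolution of the \emph{linear-principal-part} inequality $-\divi(ADv)\le(1+\mu v)(c_\lambda u+h)$, in which the troublesome quadratic term has been absorbed, and bounding $\sup_\Omega u$ becomes equivalent to bounding $\sup_\Omega v$.

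Next I would localize the estimate to $\Omega_{c^+}$. On $\Omega\setminus\overline{\Omega_{c^+}}$ one has $c_\lambda=-c^-\le 0$, so the zero-order term carries the favorable sign for large positive $u$; the classical Boccardo--Murat--Puel a priori bound for such (now coercive-type) problems then controls $\sup_{\Omega\setminus\Omega_{c^+}}u$ by the data and by $u$ on $\partial(\Omega\setminus\Omega_{c^+})$, where $u$ is either $0$ (on $\partial\Omega$) or bounded by $\sup_{\overline{\Omega_{c^+}}}u$. Thus it suffices to bound $u$ on $\overline{\Omega_{c^+}}$, and by \eqref{A} we may work on the set $\{c^-=0\}\supset\Omega_{c^+}$, where $c_\lambda=\lambda c^+\ge\Lambda_1 c^+\ge 0$.

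On $\Omega_{c^+}$ the right-hand side is a genuinely positive, superlinear source: writing $u=\tfrac1\mu\ln(1+\mu v)$ gives $(1+\mu v)c_\lambda u\gtrsim\Lambda_1\,c^+\,v\ln(1+\mu v)$, which grows faster than linearly in $v$ while its coefficient stays bounded below because $\lambda\ge\Lambda_1>0$. This is exactly the configuration of a De Figueiredo--Lions--Nussbaum superlinear estimate, carried out in two moves. First, testing the subsolution inequality for $v$ against the positive principal eigenfunction $\varphi_1$ of \eqref{eig1} (integrating by parts and using $-\divi(AD\varphi_1)=c_{\gamma_1}\varphi_1$) bounds the superlinear quantity $\int_{\Omega_{c^+}}\varphi_1\,v\ln(1+\mu v)$ by a constant times $\int_{\Omega_{c^+}}\varphi_1 v$ plus data; superlinearity then forces a uniform weighted $L^1$, hence $L^\varepsilon$, bound on $v$ near $\Omega_{c^+}$. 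Second, I would upgrade this integral bound to a pointwise one by a local analysis at a point $\bar x\in\overline{\Omega_{c^+}}$ realizing the supremum: at an interior point the Interior Weak Harnack Inequality (equivalently the machinery behind Theorem \ref{SMP}) does this in a ball, while at a boundary point $\bar x\in\partial\Omega$ the Boundary Weak Harnack Inequality of Theorem \ref{teoBWHI} controls $\inf_{B_R'}u/d$ by $\big(\int_{B_R'}(u/d)^\varepsilon\big)^{1/\varepsilon}$ minus $C\|f\|_{L^p}$ in a half-ball, feeding the $L^\varepsilon$ bound back to dominate the supremum. Keeping all constants uniform over $\lambda\in[\Lambda_1,\Lambda_2]$ then produces the single $\widetilde M$.

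The main obstacle is precisely the boundary case $\bar x\in\Omega_{c^+}\cap\partial\Omega$. Since the coefficients are only $L^p$ and the boundary only $C^{1,Dini}$, the usual blow-up/Liouville or Schauder routes to superlinear a priori bounds are unavailable, and the interior Harnack inequality cannot reach $\partial\Omega$; the Boundary Weak Harnack Inequality is the new ingredient that makes the estimate go through up to the boundary. A secondary technical point is the bookkeeping that keeps every constant independent of $\lambda$ on $[\Lambda_1,\Lambda_2]$: the lower bound $\Lambda_1>0$ is essential, since as $\lambda\downarrow 0$ the superlinear coefficient degenerates and the bound must fail --- consistent with the continuum bifurcating from infinity at $\lambda=0$.
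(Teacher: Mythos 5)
Your skeleton coincides with the paper's: reduce to a bound on $\Omega_{c^+}$ (this is Lemma \ref{lbound}, proved by comparison on $\Omega\setminus\overline{\Omega}_{c^+}$), make an exponential change of variables, exploit that $\lambda\geq\Lambda_1>0$ turns $c_\lambda u$ into a superlinear source in the new variable, and treat interior and boundary points of $\overline{\Omega}_{c^+}$ separately, with the Boundary Weak Harnack Inequality as the decisive tool at $\partial\Omega$. However, the core estimate as you describe it has a genuine directional error. With $\mu=\mu_2/\vartheta$ the remainder $(MDu,Du)-\mu(ADu,Du)\leq 0$, so your $v$ is only a \emph{subsolution} of $-\divi(ADv)\leq(1+\mu v)(c_\lambda u+h)$. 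Testing a subsolution against $\varphi_1$ gives $\int c_{\gamma_1}\varphi_1 v\leq\int(1+\mu v)(c_\lambda u+h)\varphi_1$, i.e.\ it bounds the \emph{linear} quantity by the \emph{superlinear} one --- the opposite of what a de Figueiredo--Lions--Nussbaum argument requires. The supersolution inequality that lets superlinearity force an integral bound comes from the \emph{other} change, $\nu_1=\mu_1\vartheta$, for which $M-\nu_1A\geq0$ and hence $-\divi(ADw_1)\geq(1+\nu_1w_1)[c_\lambda g_1(w_1)+h]$. The paper needs both changes at once: $w_1$ is the supersolution of a superlinear inequality, and $w_2$ (your $v$) is the subsolution with polynomially controlled right-hand side; they are linked by $w_2=\xi(v_1+z_0)$.

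The second gap is in your ``second move''. The weak Harnack inequality, interior or boundary, estimates $\inf(u/d)$ \emph{from below} by $\|u/d\|_{L^\varepsilon}$ minus data; it cannot by itself ``dominate the supremum''. Converting integral information into a pointwise \emph{upper} bound requires the local maximum principle (local boundedness estimate) applied to a subsolution whose right-hand side grows at most like $s^{r}$ with $r$ below the dimensional threshold $\frac{n+1}{n-1}+\bigl(\frac1\beta-1\bigr)\frac{2}{n-1}$, so that the $L^p$ norm of the right-hand side can be reabsorbed; this is precisely the role of $w_2$ and of the growth hypothesis \eqref{g} in Theorem \ref{newmethod}, which your sketch omits and whose job you assign to the WHI. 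Moreover, the classical DLN upgrade from the weighted $L^1$ bound to a boundary supremum bound relies on $W^{2,p}$ estimates unavailable for $L^p$ coefficients and $C^{1,Dini}$ boundaries; the loop is actually closed by Sirakov's quantitative combination of the local maximum principle for $w_2$, the BWHI for $v_1$, and superlinearity, packaged as Theorem \ref{newmethod} and applied in Lemmas \ref{interior} and \ref{exterior}. Your identification of the boundary case and of the necessity of $\Lambda_1>0$ is correct, but as written the two central steps of your estimate do not go through.
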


To prove Theorem \ref{6.3} in the general case, we show that an a priori bound for solutions of \eqref{$P_lambda$} depends only on controlling the solution over $\Omega_{c^+}$. By compactness, 
this reduces to analyzing the behavior near arbitrary points $\overline{x} \in \overline{\Omega}_{c^+}$.

\begin{lemma}\label{lbound}
Under the hypotheses to \eqref{$P_lambda$}, there exists a constant $M>0$ such that, for any $\lambda\in \mathbb{R}$, any solution $u$ to the problem \eqref{$P_lambda$} satisfies
	\begin{align*}
		-\sup_{\Omega_{c^+}}u^--M\leq u\leq\sup_{\Omega_{c^+}}u^++M.
	\end{align*}
\end{lemma}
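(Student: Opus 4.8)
The plan is to prove the two inequalities separately, the guiding observation being that on the region where $u$ overshoots its bound on $\Omega_{c^+}$ the zeroth order term becomes sign-definite in the favourable direction, precisely because $c^+$ vanishes there. For the upper bound I would set $k:=\sup_{\Omega_{c^+}}u^+\ge 0$ and work on the super-level set $\omega:=\{x\in\Omega:\ u(x)>k\}$. Since $u\le k$ on $\Omega_{c^+}$, one has $\omega\subseteq\Omega\setminus\Omega_{c^+}$, hence $c^+=0$ a.e. and $c_\lambda=-c^-$ on $\omega$; as $u>0$ there, the zeroth order contribution is $c_\lambda u=-c^-u\le 0$. The difficulty is that the quadratic term $(M(x)Du,Du)\ge 0$ enters with the unfavourable sign for an upper bound, so I would remove it by the Cole--Hopf change of variable: with $\mu:=\mu_2/\vartheta$ and $v:=\tfrac1\mu\bigl(e^{\mu(u-k)}-1\bigr)$ on $\omega$, a direct computation using $\vartheta I_n\le A(x)$ and $M(x)\le\mu_2 I_n$ renders the gradient contribution nonpositive, so that
\[
-\divi(A(x)Dv)\le e^{\mu(u-k)}\bigl(c_\lambda u+h\bigr)\le e^{\mu(u-k)}h^+\quad\text{in }\omega,
\]
the term $c_\lambda u\le 0$ being discarded. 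Writing $e^{\mu(u-k)}=\mu v+1$ turns this into the linear differential inequality $-\divi(A(x)Dv)\le \mu h^+ v+h^+$ with $v\ge 0$ and $v=0$ on $\partial\omega$ (indeed $u=0\le k$ on $\partial\Omega$ and $u=k$ on the interior boundary $\{u=k\}$).

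The lower bound is the easy direction, because there the quadratic term works in our favour. I would put $k':=\sup_{\Omega_{c^+}}u^-\ge 0$ and $\omega':=\{x\in\Omega:\ u(x)<-k'\}$, which again lies in $\Omega\setminus\Omega_{c^+}$, so that $c_\lambda u=-c^-u\ge 0$ there since $u<0$. Then for $w:=-u$ one computes
\[
-\divi(A(x)Dw)=-c_\lambda u-(M(x)Du,Du)-h\le h^-\quad\text{in }\omega',
\]
because both $-c_\lambda u\le 0$ and $-(M(x)Du,Du)\le 0$. As $w\le k'$ on $\partial\omega'$ and $h^-\in L^p(\Omega)$ with $p>n>n/2$, the classical Stampacchia $L^\infty$-estimate for subsolutions (\cite[Theorem 8.16]{GT}) yields $\sup_{\omega'}w\le k'+C\|h^-\|_{L^p(\Omega)}$, i.e. $u\ge -k'-M$, with $C$ depending only on $n,p,\vartheta$ and $|\Omega|$, hence independent of $\lambda$ and of $u$.

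It remains to close the upper bound, and this is the step I expect to be the main obstacle. The Cole--Hopf substitution has traded the unfavourable quadratic gradient term for a zeroth order term $\mu h^+ v$ of equally unfavourable sign, so a bare maximum principle is not available. I would instead bound $\sup_\omega v$ (equivalently $\sup_\omega u$) by a De Giorgi--Stampacchia iteration applied to $v$, exploiting that $v=0$ on $\partial\omega$, that $v\in L^\infty$ a priori since $u\in L^\infty(\Omega)$, and that both the coefficient $\mu h^+$ and the datum $h^+$ lie in $L^p(\Omega)$ with $p>n$; the crucial point is that the operator restricted to $\omega$ does not see $c^+$, so the resulting constant is independent of $\lambda$. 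This gives $u\le k+M$, and taking $M$ to be the larger of the two constants produced above completes the proof.
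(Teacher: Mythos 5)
Your lower-bound argument is sound, and in fact sharper than what the paper proves at this point: on $\omega'=\{u<-k'\}$ both the zeroth-order and the quadratic terms have the favourable sign, $w=-u$ satisfies $-\divi(A(x)Dw)\le h^-$ with $w\le k'$ on $\partial\omega'$, and the Stampacchia estimate gives a constant depending only on $n,p,\vartheta,|\Omega|,\|h^-\|_{L^p}$. The genuine gap is in the upper bound, exactly at the step you flag as the main obstacle, and it cannot be closed the way you propose. After the Cole--Hopf substitution you are left with $-\divi(A(x)Dv)\le \mu h^+v+h^+$ on $\omega$ with $v=0$ on $\partial\omega$: the zeroth-order coefficient $\mu h^+\ge 0$ has the sign that \emph{defeats} the maximum principle, and there is no smallness of $\|h^+\|$ or of $|\omega|$ to compensate. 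The De Giorgi--Stampacchia iteration only yields $\sup_\omega v\le C\bigl(\|v\|_{L^2(\omega)}+\|h^+\|_{L^p}\bigr)$, and $\|v\|_{L^2(\omega)}$ is exponential in $\sup_\Omega u$, so the estimate is circular; knowing that each individual $v$ is bounded gives no uniform constant. Worse, an absolute upper bound depending only on the data is not available here: the coercive problem $-\divi(A(x)Du)=-c^-u+(M(x)Du,Du)+h$ on $\Omega\setminus\overline{\Omega}_{c^+}$ may fail to be solvable at all for large $h^+$ (after Cole--Hopf this is the loss of positivity of the principal eigenvalue of $-\divi(A(x)D\cdot)-\mu h^+$), and when it is solvable the size of its solution is not controlled by $\|h^+\|_{L^p}$ alone.

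The paper avoids this by letting $M$ depend on one fixed solution rather than only on the data. If $(P_\lambda)$ has no solution for any $\lambda$ the lemma is vacuous; otherwise it fixes $\widetilde\lambda$ and a solution $\widetilde u$ of $(P_{\widetilde\lambda})$ and takes $M=2\|\widetilde u\|_\infty$. On $\mathcal D=\Omega\setminus\overline{\Omega}_{c^+}$ the function $v=u-\sup_{\partial\mathcal D}u^+$ is a subsolution of the coercive problem $(P_0)$ with $v\le 0$ on $\partial\mathcal D$, while $\widetilde v=\widetilde u+\|\widetilde u\|_\infty$ is a supersolution with $\widetilde v\ge 0$ there (the $\lambda c^+$ term disappears on $\mathcal D$ in both cases), and the comparison principle for coercive quadratic-gradient problems gives $v\le\widetilde v$, hence $u\le\sup_{\Omega_{c^+}}u^++M$. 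To repair your argument you would have to replace the iteration in the final step by such a comparison against a fixed solution (or otherwise allow $M$ to depend on one); a constant depending only on the coefficients and on $\|h^+\|_{L^p}$ is out of reach in the upper direction.
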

\begin{proof}
	In case the problem \eqref{$P_lambda$} has no solutions for any $\lambda\in \mathbb{R}$, there is nothing to prove.
	Hence, we assume the existence of $\widetilde{\lambda}\in \mathbb{R}$ such that $( P_{\scriptscriptstyle{\widetilde{\lambda}}})$ has a solution $\widetilde{u}$.
	We shall prove the result for $M:=2\|\widetilde{u}\|_{\infty}$. Let $u$ be an arbitrary solution to \eqref{$P_lambda$}.
	Setting $\mathfrak{D}:=\Omega \setminus \overline{\Omega}_{c^+}$ and $v=u-\sup\limits_{\partial \mathfrak{D}} u^+$, we have
	\begin{eqnarray*}
		-\divi(A(x)Dv)&=&-c^-(x)v+(M(x)Dv,Dv)+h(x)-c^-(x)\sup\limits_{\partial \mathfrak{D}} u^+\\
		&\leq& -c^-(x)v+(M(x)Dv,Dv)+h(x)\mbox{ in } \mathfrak{D}.
	\end{eqnarray*}
	Since $v\leq 0$ on $\partial \mathfrak{D}$, it follows that $v$ is a subsolution to $(P_0)$.
		On the other hand, setting $\widetilde{v}=\widetilde{u}+\|\widetilde{u}\|_\infty$ we obtain
		\begin{eqnarray*}
			-\divi(A(x)D\widetilde{v})&=&-c^-(x)\widetilde{v}+(M(x)D\widetilde{v},D\widetilde{v})+h(x)+c^-(x)\|\widetilde{u}\|_\infty\\
			&\geq&-c^-(x)\widetilde{v}+(M(x)D\widetilde{v},D\widetilde{v})+h(x)\mbox{ in }\mathfrak{D},
		\end{eqnarray*}
		and thus, as $\widetilde{v}\geq 0$ on $\partial \mathfrak{D}$, it means that $\widetilde{v}$ is a supersolution to $(P_0)$. By standard regularity results, see for instance  \cite[Lemma 2.1]{ACJTuni}, we get $u, \widetilde{u} \in H^1(\Omega)\cap W^{1,n}_{loc}(\Omega)\cap C(\overline{\Omega})$ and hence, ${v, \widetilde{v} \in H_0^1(\mathfrak{D})\cap W^{1,n}_{loc}(\mathfrak{D})\cap C(\overline{\mathfrak{D}})}$ and the right-hand sides of the above inequalities are $L^n$ functions. Therefore, we are able to apply the 
		Comparison Principle \cite[Lemma 2.11]{fio} and conclude that $v\leq \widetilde{v}$ in $\mathfrak{D}$, namely,
		$
			u \leq  \widetilde{u}+\|\widetilde{u}\|_\infty +\displaystyle\sup_{\partial \mathfrak{D}} u^+ \mbox{ in } \mathfrak{D}
	$ and then, $u \leq  M +\displaystyle\sup_{\Omega_{c^+}} u^+ \mbox{ in } \Omega$.
	
		For the other inequality, we now define $v:=u+\sup\limits_{\partial\mathfrak{D}}u^-$ and  hence obtain that $v\geq 0$ on $\partial \mathfrak{D}$,
	and also that $v$ is a supersolution to $(P_0)$. Furthermore, defining $\widetilde{v}=\widetilde{u}-\|\widetilde{u}\|_{\infty}$, we have that $\widetilde{v}\leq 0$ on $\partial\mathfrak{D}$ 
	and also that $\widetilde{v}$ is a subsolution to $(P_0)$. As previously, we have that $v, \widetilde{v} \in H_0^1(\mathfrak{D})\cap W^{1,n}_{loc}(\mathfrak{D})\cap C(\overline{\mathfrak{D}})$, and applying again the Comparison Principle 
		we get $\widetilde{v}\leq v$ in $\mathfrak{D}$. Namely,
		$
	\geq \widetilde{u}-\|\widetilde{u}\|_{\infty}  u-\displaystyle\sup_{\partial\mathfrak{D}}u^- \mbox{ in } \mathfrak{D}.
	$
		Therefore, it yields $u\geq -\displaystyle\sup_{\Omega_{c^+}}u^--M \mbox{ in } \Omega$, ending the proof.
	\end{proof}
	
	Now, let $u\in H^1_0(\Omega)\cap L^\infty(\Omega)$ be a solution to \eqref{$P_lambda$}. We introduce the exponential change of variable
	\begin{align}
		w_i(x):=\frac{1}{\nu_i}(e^{\nu_i u(x)}-1) \quad\mbox{and}\quad g_i(x):=\frac{1}{\nu_i}\ln(1+\nu_i s)\mbox{, }i=1,2
	\end{align}
	where $\nu_1:=\mu_1\vartheta \ \mbox
	{ and }\ \nu_2:=\mu_2\vartheta^{-1}$, for $\mu_1$, $\mu_2$ given in \eqref{5.2} and $\vartheta$ given in the definition of the matrix $A(x)$.
	The following change of variables lemma follows straightway from an algebraic computation and it is going to be useful for proving our results.

	\begin{lemma}[{\bf Exponential Change}]\label{exponentialchange}
		Let $u$ be a weak solution to problem \[-\divi(A(x)Du)=f(x), \quad f\in L^p(\Omega).\]
		For $m>0$ we define
$ v:=\dfrac{e^{mu}-1}{m}$ and $w:=\dfrac{1-e^{-mu}}{m}.$
		Then $Dv=(1+mv)Du$, ${Dw=(1-mw)Du}$, and for each $\vartheta>0$ we have,
		\begin{align*}
			-\divi(A(x)Du)-\vartheta^{-1} m |Du|^2&\leq\frac{-\divi(A(x)Dv)}{1+mv}\leq -\divi(A(x)Du)-\vartheta m |Du|^2,\\
			-\divi(A(x)Du)+\vartheta m |Du|^2&\leq\frac{-\divi(A(x)Dw)}{1-mw}\leq -\divi(A(x)Du)-\vartheta^{-1} m |Du|^2,
		\end{align*}
		and $\{u=0\}=\{v=0\}$ and $\{u>0\}=\{v>0\}$.
		Therefore
		if $u$ is a weak supersolution to
		\begin{align}\label{expo}
			-\divi(A(x)Du)\geq \mu_1 |Du|^2+c_\lambda(x)u+h(x),
		\end{align}
		and for $m=\mu_1\vartheta$, $v$ is a weak supersolution to
		\begin{align*}
			-\divi(A(x)Dv)\geq h(x)(1+mv) +\frac{c_\lambda(x)}{m}(1+mv)\ln(1+mv).
		\end{align*}
	\end{lemma}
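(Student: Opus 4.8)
The plan is to reduce every assertion to one exact distributional identity obtained by differentiating the transformation, and then to sandwich the resulting quadratic form using only the ellipticity bounds $\vartheta I_n \le A(x) \le \vartheta^{-1}I_n$. First I would record the normalizations $1+mv = e^{mu}$ and $1-mw = e^{-mu}$, which are immediate from the definitions of $v$ and $w$. Since $u\in H^1_0(\Omega)\cap L^\infty(\Omega)$ and $t\mapsto (e^{mt}-1)/m$ is Lipschitz on the bounded range of $u$, the chain rule for compositions with Lipschitz functions gives $v,w\in H^1_0\cap L^\infty$ together with $Dv = e^{mu}Du = (1+mv)Du$ and $Dw = e^{-mu}Du = (1-mw)Du$, which is the first assertion.

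Next I would compute the weak divergence of $A(x)Dv$. Since $A(x)Dv = e^{mu}A(x)Du$, the product rule gives $-\divi(A(x)Dv) = e^{mu}[-\divi(A(x)Du)] - m\,e^{mu}(Du,A(x)Du)$; dividing by $1+mv = e^{mu}$ yields the exact identity
\[
\frac{-\divi(A(x)Dv)}{1+mv} = -\divi(A(x)Du) - m(Du,A(x)Du),
\]
and symmetrically $\frac{-\divi(A(x)Dw)}{1-mw} = -\divi(A(x)Du) + m(Du,A(x)Du)$. The two displayed chains of inequalities then follow at once by inserting $\vartheta|Du|^2 \le (Du,A(x)Du)\le \vartheta^{-1}|Du|^2$ and tracking the sign of the factor $\mp m$. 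The set identities $\{u=0\}=\{v=0\}$ and $\{u>0\}=\{v>0\}$ are just the strict monotonicity of $t\mapsto (e^{mt}-1)/m$.

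For the final supersolution statement I would argue directly in the weak formulation rather than pointwise. Given a nonnegative test function $\psi$, I would test the supersolution inequality for $u$ against $\phi := e^{mu}\psi$, an admissible nonnegative element of $H^1_0\cap L^\infty$ because $u$ is bounded. Using $D\phi = e^{mu}(D\psi + m\psi\,Du)$ to split $\int A\,Du\, D\phi = \int A\,Dv\, D\psi + m\int \psi\,e^{mu}(Du,A\,Du)$, the supersolution inequality becomes
\[
\int_\Omega A(x)Dv\, D\psi \ge \int_\Omega \psi\,e^{mu}\big(\mu_1|Du|^2 + c_\lambda u + h\big) - m\int_\Omega \psi\,e^{mu}(Du,A(x)Du).
\]
Choosing $m=\mu_1\vartheta$ and bounding $(Du,A(x)Du)\le \vartheta^{-1}|Du|^2$ makes the gradient-quadratic terms cancel, since $\mu_1|Du|^2 - m(Du,A(x)Du) \ge (\mu_1 - m\vartheta^{-1})|Du|^2 = 0$. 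What survives is $\int A\,Dv\, D\psi \ge \int \psi(1+mv)(c_\lambda u + h)$, and substituting $u = \tfrac{1}{m}\ln(1+mv)$ (equivalently $mu=\ln(1+mv)$) produces exactly the claimed right-hand side.

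The only real subtlety, and the step I would handle most carefully, is the admissibility of the transformed test function $\phi = e^{mu}\psi$ and the validity of the chain and product rules in the weak sense. Everything is legitimate precisely because $u\in L^\infty$: then $e^{mu}$ is bounded with gradient $m\,e^{mu}Du\in L^2$, so $\phi\in H^1_0\cap L^\infty$, the integration by parts passing from $\divi(A\,Dv)$ to $\divi(A\,Du)$ is justified, and the gradient-quadratic terms that must cancel are genuinely integrable. Once this bookkeeping is secured, the remainder is the elementary sign-tracking described above.
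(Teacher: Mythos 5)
Your proof is correct and carries out precisely the algebraic computation that the paper invokes without detail (the lemma is stated as following ``straightway from an algebraic computation''), together with the standard weak-formulation justification via the test function $e^{mu}\psi$, which is indeed the only point requiring care and which your use of $u\in L^\infty(\Omega)$ handles properly. Note only that your exact identity $\frac{-\divi(A(x)Dw)}{1-mw}=-\divi(A(x)Du)+m\,(A(x)Du,Du)$ shows that the upper bound in the second displayed chain of the statement should read $-\divi(A(x)Du)+\vartheta^{-1}m|Du|^2$ (the printed minus sign is a typo, since otherwise the chain would be vacuous for $Du\neq 0$), and your argument establishes the corrected form.
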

	
	By Lemma \ref{exponentialchange} we have,
	\begin{align}\label{w_i}
		-\divi(A(x)Dw_i)&=(1+\nu_iw_i)\left[c_\lambda(x)g_i(w_i)+h(x)+\big([M(x)-\nu_iA(x)]Du,Du\big)\right].
	\end{align}
	Note that the last term is negative for $i=1$ and positive for $i=2$.
	Using \eqref{w_i} we shall obtain a uniform a priori upper bound on $u$ in a neighborhood of any fixed point $\overline{x}\in\overline{\Omega}_{c^+}$. We consider the two cases $\overline{x}\in\overline{\Omega}_{c^+}\cap\Omega$ and $\overline{x}\in\overline{\Omega}_{c^+}\cap\partial\Omega$ separately.
	
	\begin{lemma}\label{interior}
		Assume that \eqref{A} holds and that $\overline{x}\in\overline{\Omega}_{c^+}\cap\Omega$. For each $\Lambda_2>\Lambda_1>0$, there exist $M_1>0$ and $R>0$ such that, for any $\lambda\in[\Lambda_1,\Lambda_2]$, any solution $u$ to \eqref{$P_lambda$} satisfies $\sup\limits_{B_R(\widetilde{x})}u\leq M_1$.
	\end{lemma}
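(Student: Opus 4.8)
The plan is to linearise \eqref{$P_lambda$} through the exponential substitution, to localise around $\overline{x}$ so that the zero order coefficient becomes nonnegative, and then to read the bound off the interior weak Harnack inequality, the superlinear growth of the transformed reaction being the mechanism that rules out large solutions (recall that by Lemma~\ref{lbound} it suffices to control $u$ on $\Omega_{c^+}$). First I would localise: since $\overline{x}\in\overline{\Omega}_{c^+}\cap\Omega$ and, by \eqref{A}, $c^-$ vanishes on the $\varepsilon$-neighbourhood of $\Omega_{c^+}$, I fix $R>0$ so small that $\overline{B_{2R}(\overline{x})}\subset\Omega$ and $B_{2R}(\overline{x})\subset\{d(\cdot,\Omega_{c^+})<\varepsilon\}$. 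On this ball $c_\lambda=\lambda c^+\ge\Lambda_1 c^+\ge 0$ for every $\lambda\in[\Lambda_1,\Lambda_2]$, and since $\overline{x}$ lies in the support of $c^+$ I may also choose $c_0>0$ and shrink $R$ so that $E:=\{c^+\ge c_0\}\cap B_R(\overline{x})$ has positive measure.

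Next I would apply the exponential change of Lemma~\ref{exponentialchange} with $m=\nu_1=\mu_1\vartheta$, setting $w:=w_1=\nu_1^{-1}(e^{\nu_1 u}-1)$. By the supersolution conclusion of Lemma~\ref{exponentialchange} (equivalently, by \eqref{w_i} after discarding the signed gradient term allowed by \eqref{5.2} and $\vartheta I_n\le A\le\vartheta^{-1}I_n$), the function $w$ is a supersolution of
\[
-\divi(A(x)Dw)\ \ge\ \lambda c^+(x)\,(1+\nu_1 w)\,g_1(w)+(1+\nu_1 w)\,h(x)\quad\text{in }B_{2R}(\overline{x}),
\]
with $g_1(w)=u$ and $1+\nu_1 w=e^{\nu_1 u}>0$. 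On $B_{2R}(\overline{x})$ the reaction $\lambda c^+u\,e^{\nu_1 u}$ is nonnegative and grows superlinearly in $w$ on $E$, while the lower order term $(1+\nu_1 w)h$ can be absorbed as the fixed zero order coefficient $\nu_1|h|$ plus the fixed source $-|h|$, so that Theorem~\ref{teoBWHI}, in its interior form ($d$ is bounded below on $B_{2R}(\overline{x})\Subset\Omega$), applies to the nonnegative supersolution $1+\nu_1 w$ with constants depending only on $n,p,\vartheta,R$ and $\|h\|_{L^p}$, hence uniform in $\lambda$ and independent of the particular solution.

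The heart of the proof, and the step I expect to be the main obstacle, is to turn this into an absolute bound in spite of the superlinearity. I would argue by contradiction, assuming solutions $u_k$ with $\lambda_k\in[\Lambda_1,\Lambda_2]$ and $\sup_{B_R(\overline{x})}u_k\to+\infty$. The weak Harnack inequality first serves to \emph{spread} this largeness: comparing the infimum of $1+\nu_1 w_k$ with its $L^\varepsilon$ average over $B_{2R}(\overline{x})$ forces $u_k$ to be large not merely at one point but throughout a fixed sub-ball, in particular on $E$. Testing the displayed supersolution inequality against a fixed nonnegative $\phi\in C_c^\infty(B_R(\overline{x}))$ with $\phi\equiv 1$ on $E$ then bounds $\int A Dw_k\,D\phi$ above by $C_\phi\int_{B_R}e^{\nu_1 u_k}$, while from below the reaction contributes at least $\Lambda_1 c_0\!\int_E u_k\,e^{\nu_1 u_k}$ minus the controlled term $\int\phi|h|e^{\nu_1 u_k}$. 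Since on $E$ we now have $u_k\to\infty$, the superlinear factor $u_k$ makes the lower bound eventually exceed the upper bound, a contradiction. This yields a bound on $\int_{B_{2R}(\overline{x})}e^{\nu_1 u}$, uniform in $\lambda\in[\Lambda_1,\Lambda_2]$ and independent of $u$.

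Finally I would upgrade this averaged control to the stated pointwise bound. The companion substitution $w_2=\nu_2^{-1}(e^{\nu_2 u}-1)$, with $\nu_2=\mu_2\vartheta^{-1}$, is by \eqref{w_i} a \emph{subsolution} because the gradient term now carries the opposite sign; the De Giorgi--Nash--Moser local boundedness estimate gives $\sup_{B_R(\overline{x})}w_2\le C\big(|B_{2R}|^{-1}\int_{B_{2R}}w_2^{s}\big)^{1/s}+C\|g_k\|_{L^q}$ for suitable $s,q$, where the nonnegative source $g_k=\lambda c^+u\,e^{\nu_2 u}+h\,e^{\nu_2 u}$ is controlled by the integral bound of the previous step. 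Translating back through the monotone change of variable produces $\sup_{B_R(\overline{x})}u\le M_1$ with $M_1$ depending only on the data and on $\Lambda_1,\Lambda_2$. The two genuine difficulties are exactly these: making the weak Harnack ``spreading'' from a single large value to the whole set $E$ rigorous (which ties the subsolution and supersolution estimates together in a bootstrap), and keeping every constant uniform in $\lambda$ and independent of the solution, which is what dictates treating the $h$-contribution as a fixed zero order coefficient rather than as a solution dependent source.
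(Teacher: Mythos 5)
Your setup\mdash localising so that $c^-\equiv 0$ and $c_\lambda=\lambda c^+\ge \Lambda_1 c^+\ge 0$ near $\overline{x}$, passing to the two exponential substitutions, and exploiting that $w_1$ is a supersolution of a superlinear problem while $w_2$ is a subsolution of a problem with controlled growth\mdash matches the paper's. But the core quantitative step does not close. First, the weak Harnack inequality of Theorem~\ref{teoBWHI} bounds the \emph{infimum} from below by the $L^\varepsilon$ average: it converts ``large on a set of positive measure'' into ``large everywhere'', not ``large at one point'' into ``large on $E$''. To pass from $\sup_{B_R}u_k\to\infty$ to largeness of the average you would need the local maximum principle for the subsolution, but its source $\lambda c^+u\,e^{\nu_2 u}+h\,e^{\nu_2 u}$ is itself superlinear and a priori uncontrolled, so the implication is circular\mdash this is precisely the difficulty that superlinear a priori bound methods exist to break. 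Second, the testing step is unjustified: with $A$ merely bounded measurable you cannot integrate by parts to bound $\int A\,Dw_k\,D\phi$ by $C_\phi\int e^{\nu_1 u_k}$, and after the substitution there is no longer a coercive quadratic gradient term on the right-hand side into which to absorb the cross term (the leftover $\big((M-\nu_1 A)Du,Du\big)$ is nonnegative but may vanish, since $\nu_1 A\le\mu_1 I\le M$ can be an equality). Even granting both points, the contradiction requires $\int_E u_k e^{\nu_1 u_k}$ to dominate $\int_{B_R}e^{\nu_1 u_k}$, which does not follow from $u_k\to\infty$ on $E$, because the mass of $e^{\nu_1 u_k}$ may concentrate on $B_R\setminus E$. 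Third, the final upgrade feeds a bound on $\int e^{\nu_1 u}$ into a local boundedness estimate whose source involves $e^{\nu_2 u}$ with $\nu_2=\mu_2\vartheta^{-1}\ge\nu_1=\mu_1\vartheta$; the former does not control the latter.

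The paper closes the argument by a different mechanism: it introduces the auxiliary solution $z_0$ of \eqref{6.6} to absorb the negative part $-e^{-1}/\nu_1$ of $(1+\nu_1 s)g_1(s)$, sets $v_1=w_1-z_0+1/\nu_1>0$, records the algebraic identity $w_2=\xi(v_1+z_0)$ with $\xi(s)=[(\nu_1 s)^{\nu_2/\nu_1}-1]\nu_2^{-1}$, and then invokes Theorem~\ref{newmethod} (an adaptation of \cite[Theorem 2]{Anew}), whose hypotheses are exactly the supersolution inequality \eqref{v_1} with superlinear $f$, the subsolution inequality \eqref{w_2} with polynomially controlled $g$, and the exponent restriction $r<\frac{n+1}{n-1}+\left(\frac{1}{\beta}-1\right)\frac{2}{n-1}$ with $\beta=\nu_2/\nu_1$. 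That theorem is where your ``spreading plus superlinearity'' heuristic is made rigorous, including the mismatch between the two exponential scales; your proof needs either to cite it (or an equivalent) or to reconstruct its proof, and the sketch as written does not substitute for it.
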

	\begin{proof}
		Under the assumption \eqref{A} we can find a $R>0$ such that $M(x)\geq \mu_1I_n>0, \ c^-\equiv 0$ in $B_{4R}(\widetilde{x})$ and $c^+\gneqq 0$ in $B_{R}(\overline{x})$.
		Observe that from \eqref{w_i} for $i=1$, we get
		\begin{align*}
			-\divi(A(x)Dw_1)
			&\geq(1+\nu_1w_1)[\lambda c^+(x)g_1(w_1)+h^+(x)]-h^-(x)-\nu_1h^-(x)w_1\\
			&+(1+\nu_1w_1)(\mu_1-\vartheta^{-1}\nu_1)|Du|^2.
		\end{align*}
		Therefore, in $B_{4R}(\overline{x})$ it yields,
		\begin{align}\label{6.5}
			-\divi(A(x)Dw_1)+\nu_1h^-(x)w_1&\geq(1+\nu_1w_1)[\lambda c^+(x)g_1(w_1)+h^+(x)]-h^-(x).
		\end{align}
		Let $z_0$ be the solution to
		\begin{align}\label{6.6}
			-\divi(A(x)Dz_0)+\nu_1h^-(x)z_0&=-\Lambda_2c^+(x)\frac{e^{-1}}{\nu_1},\mbox{ } z_0\in H_0^1(B_{4R}(\widetilde{x})).
		\end{align}
		By the classical regularity, see \cite[Theorem III-14.1]{MR0244627}, $z_0\in C(\overline{B_{4R}(\widetilde{x})})$ and there exists a positive constant $\overline{C}= \overline{C}(\overline{x},\nu_1, \Lambda_2, p, R, \|h^-\|_{L^p(B_{4R})}, \|c^+\|_{L^p(B_{4R})})$ such that
		$ \displaystyle z_0\geq-\overline{C} \mbox{ in } B_{4R}$. 
		Further, by the Weak Maximum Principle 
		we know that $z_0\leq 0$. Since
		\begin{align*}
		\min_{(-\scriptstyle\frac{1}{\nu_i},\infty)}(1+\nu_is)g_i(s)=-\frac{e^{-1}}{\nu_i}, \quad \text{setting} \quad v_1:=w_1-z_0+\frac{1}{\nu_1} \quad \text{it \ satisfies},
		\end{align*}
		\begin{equation}
			\begin{array}{rcl}\label{v_1}
				\displaystyle-\divi(A(x)Dv_1)+\nu_1h^-(x)v_1
				&\geq&(1+\nu_1w_1)[\lambda c^+(x)g_1(w_1)+h^+(x)]+\Lambda_2c^+(x)\displaystyle\frac{e^{-1}}{\nu_1}\\
				\displaystyle&\geq& (1+\nu_1w_1)(\Lambda_2- \Lambda_2)[ c^+(x)g_1^-(w_1)]\\
				\displaystyle &+& (1+\nu_1w_1)\Lambda_1 c^+(x)g_1^+(w_1)\\
				\displaystyle&\geq&\displaystyle\frac{\Lambda_1 c^+(x)}{\nu_1}(1+\nu_1w_1)\ln(1+\nu_1w_1)\\
				&=&f(x,v_1), \quad \text{in} \quad B_{4R}(\overline{x}),
			\end{array}
		\end{equation}	
		\begin{eqnarray}\label{f}
		\text{where} \qquad  \qquad \qquad		f:\Omega\times\mathbb{R}&\rightarrow&\mathbb{R}\\\nonumber
				(x,s)&\rightarrow& f(x,s):=\Lambda_1c^+(x)\big([s+z_0][\ln(\nu_1)+\ln(s+z_0(x))] \big)
		\end{eqnarray}
		is a superlinear function in the variable $s$. Since $w_1>-1/\nu_1$, we have $v_1>0$ in $\overline{B_{4R}(\overline{x})}$.
	
		On the other hand, for $i=2$, in view of  \eqref{5.2} and $w_2>-1/\nu_2$, by \eqref{w_i} in a similar way we conclude that $w_2$ satisfies
		\begin{equation}
			\begin{array}{rrl}\label{w_2}
				\displaystyle-\divi(A(x)Dw_2)&\leq&[1+\nu_2w_2](\lambda c^+(x)g_2(w_2)+h^+(x))\\
				&+& (\nu_1-\nu_2)h^-(x)w_2-h^-(x)-\nu_1h^-(x)w_2 \quad \text{and}\\
				\displaystyle-\divi(A(x)Dw_2)+\nu_1h^-(x)w_2&\leq&[1+\nu_2w_2]\left(\displaystyle\frac{\Lambda_2 c^+(x)}{\nu_2}\ln(1+\nu_2w_2)+h^+(x)\right)\\
				\displaystyle&=:& g(x,w_2) \quad \text{in} \quad B_{4R}(\overline{x}),
			\end{array}
		\end{equation}
		 where $g:\Omega\times\mathbb{R}\rightarrow\mathbb{R}$ satisfies
		\begin{align}\label{g}
			g(x,s)\leq a_0[1+(\nu_2s)^{\alpha+1}], \quad\mbox{for each} \quad \alpha>0, \quad \mbox{where \ } a_0(x)\in L^p(\Omega).
		\end{align}
		In fact, in order to obtain \eqref{g}, let $c_\alpha>0$ be a constant such that
		\begin{align*}
			\ln(1+x)&\leq (1+x)^\alpha+c_\alpha, \mbox{ for all } x\geq 0.\\
			\mbox{Then,} \qquad \quad	g(x,w_2)
			\leq[1+\nu_2w_2]&\left(\frac{\Lambda_2}{\nu_2}c^+(x)(1+\nu_2w_2)^\alpha+c_\alpha\frac{\Lambda_2}{\nu_2}c^+(x)+h^+(x)\right)\\
			\leq [1+\nu_2w_2]&^{\alpha+1}\left(\frac{\Lambda_2}{\nu_2}c^+(x)(1+c_\alpha)+h^+(x)\right)\leq[1+(\nu_2w_2)^{\alpha+1}]a_0(x).
		\end{align*}
			In addition, we note that $[1+\nu_2w_2]^{\frac{\nu_1}{\nu_2}}=(e^{\nu_2u})^{\frac{\nu_1}{\nu_2}}=(e^{\nu_1u})=1+\nu_1w_1=\nu_1[v_1+z_0],$
	which means that $w_2=\xi(v_1+z_0)$, where $\xi(s):=[(\nu_1s)^{\frac{\nu_2}{\nu_1}}-1]\nu_2^{-1}$ is an increasing function satisfying
		\begin{align}\label{xi}
			\lim_{s\rightarrow\infty}\frac{\xi(s)}{s^\beta}=\lim_{s\rightarrow\infty}\frac{(\nu_1s)^{\nu_2/\nu_1}-1}{\nu_2s^{\nu_2/\nu_1}}=\lim_{s\rightarrow\infty}\frac{\nu_1^{\nu_2/\nu_1}-\frac{1}{s^{\nu_2/\nu_1}}}{\nu_2}=\frac{\nu_1^{\nu_2/\nu_1}}{\nu_2}<\infty, \ \mbox{for \ }  \beta=\nu_2/\nu_1.
		\end{align}		
		Thus, we are in position to apply the following theorem, which under our assumptions is a straightforward generalization of  \cite[Theorem 2]{Anew}. In fact, as a consequence of the Boundary Weak Harnack Inequality, Theorem \ref{bwhi} the Theorems 3-6 stated in \cite{Anew} are valid under our assumptions
		on the domain and on the coefficients of \eqref{$P_lambda$}. Hence, it remains to observe that the other
		generalizations on the hypotheses of Theorem 5.3, in comparison with  \cite[Theorem 2]{Anew}, are natural,
		in view of  \cite[Remark 4]{Anew}. As a matter of completeness, we state here our adapted version. 
		
		\begin{theorem}\label{newmethod}
			Let $\Omega\subset\mathbb{R}^n$, $n\geq 2$ be a $C^{1,\mathcal{D}ini}$  bounded domain and consider the coefficients of the problems $(P_\lambda)$ under our assumptions. Assume that $z_0$ is a bounded function, $v\geq 0$ and $\xi(v+z_0)$, where $\xi$ satisfies \eqref{xi}, are functions in $H^{1}(\Omega)$ satisfying the following inequalities in the weak sense
			\begin{align*}
				-\divi(A(x)Dv)+\nu_1h^{-1}(x)v&\geq f(x,v)\\
				-\divi(A(x)D\xi(v+z_0))+\nu_1h^{-1}(x)\xi(v+z_0)&\leq g(x,\xi(v+z_0)),
			\end{align*}
			where $f$  satisfies \eqref{f} and $g$ satisfies \eqref{g}  for some $r=\alpha+1$ with
			\begin{align*}
				r<\frac{n+1}{n-1}+\left(\frac{1}{\beta}-1\right)\frac{2}{n-1}.
			\end{align*}
			Then, for some $C$ depending on the concerned quantities we have
			\begin{align*}
				\xi(v(x)+z_0)\leq Cd(x) \mbox{ \ in \ } \Omega \quad \mbox{ and \ hence }\quad  v(x)\leq C.
			\end{align*}
		\end{theorem}		
		In view of \eqref{v_1} and \eqref{w_2} we are able to apply Theorem \ref{newmethod} for $v=v_1$ and $w_2=\xi(v_1+z_0)$ and conclude that $v_1$ and $w_2$ have upper bounds in $B_{4R}(\overline{x})$. As a consequence, the same holds for $w_1$ and also for $u$, as desired.
	\end{proof}
	\begin{lemma}\label{exterior}
		Assume that \eqref{A} holds and that $\overline{x}\in\overline{\Omega}_{c^+}\cap\partial\Omega$. For each $\Lambda_2>\Lambda_1>0$, there exist $R>0$ and $M_2>0$ such that, for any $\lambda\in[\Lambda_1,\Lambda_2]$, any solution $u$ to \eqref{$P_lambda$} satisfies $\sup\limits_{B_R(\overline{x})\cap\Omega}u\leq M_2$.
	\end{lemma}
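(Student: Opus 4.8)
The plan is to reproduce, almost verbatim, the argument of Lemma \ref{interior}, the only structural changes being that the full ball is replaced by the semiball $B_{4R}(\overline{x})\cap\Omega$ and that the Interior Weak Harnack estimate is replaced by its boundary counterpart, the Boundary Weak Harnack Inequality of Theorem \ref{teoBWHI}; this replacement is legitimate precisely because $\partial\Omega$ is of class $C^{1,Dini}$ near $\overline{x}$. First I would invoke hypothesis \eqref{A} to fix $R>0$ so small that $M(x)\geq\mu_1 I_n$ and $c^-\equiv 0$ throughout $B_{4R}(\overline{x})\cap\Omega$, while $c^+\gneqq 0$ on $B_{R}(\overline{x})\cap\Omega$; the latter is available since $\overline{x}\in\overline{\Omega}_{c^+}$, so every neighbourhood of $\overline{x}$ meets $\mathrm{supp}(c^+)$ in positive measure.

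Next, applying the exponential change of Lemma \ref{exponentialchange}, the functions $w_1$ and $w_2$ of \eqref{w_i} satisfy, on the semiball, exactly the differential inequalities \eqref{6.5} and \eqref{w_2}, with the superlinear $f$ of \eqref{f} and the function $g$ of \eqref{g}; these computations are purely local and insensitive to whether $\overline{x}$ lies inside $\Omega$ or on $\partial\Omega$. The genuinely new ingredient is the boundary datum: since every solution $u$ to \eqref{$P_lambda$} lies in $H_0^1(\Omega)\cap C(\overline{\Omega})$ and hence vanishes on $\partial\Omega$, we have $w_1=w_2=0$ on $B_{4R}(\overline{x})\cap\partial\Omega$, which supplies the homogeneous Dirichlet condition on the flat face of the semiball that the Boundary Weak Harnack Inequality requires. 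I would then take $z_0$ to be the solution of the Dirichlet problem \eqref{6.6} posed on $B_{4R}(\overline{x})\cap\Omega$, obtaining as before $z_0\in C(\overline{B_{4R}(\overline{x})\cap\Omega})$ with $-\overline{C}\leq z_0\leq 0$, via classical regularity up to the $C^{1,Dini}$ boundary together with the weak maximum principle.

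Setting $v_1:=w_1-z_0+\tfrac{1}{\nu_1}>0$ and $w_2=\xi(v_1+z_0)$ with $\xi$ as in \eqref{xi}, the inequalities \eqref{v_1} and \eqref{w_2} hold on the semiball, so the pair $(v_1,w_2)$ meets the hypotheses of Theorem \ref{newmethod}. Applying that theorem on $B_{4R}(\overline{x})\cap\Omega$ yields $\xi(v_1+z_0)\leq C\,d(x)$, whence $v_1\leq C$ there; reverting the exponential change then gives $\sup_{B_R(\overline{x})\cap\Omega}u\leq M_2$ for a constant $M_2$ depending only on the quantities already isolated in the interior case, uniformly in $\lambda\in[\Lambda_1,\Lambda_2]$.

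The main obstacle, and the point where this lemma genuinely departs from Lemma \ref{interior}, is the boundary bookkeeping underlying Theorem \ref{newmethod}: one must verify that near $\overline{x}$ the domain $\Omega$ satisfies the interior $C^{1,Dini}$-paraboloid condition demanded there, that the distance $d(x)$ is measured to $\partial\Omega$ rather than to the artificial curved face $\partial B_{4R}(\overline{x})\cap\Omega$, and that Theorem \ref{teoBWHI} is invoked on half-balls $B_\rho'$ centred on $\partial\Omega\cap B_{R}(\overline{x})$. Since that estimate controls $\inf_{B_\rho'}(w_2/d)$ by an integral norm minus a term $C\|\,\cdot\,\|_{L^p}$, the inhomogeneous and zero-order contributions must be absorbed through the comparison function $z_0$ exactly as in the interior proof; the real care is in making the geometric constants $R$ and $C$ uniform over the compact boundary piece $\overline{\Omega}_{c^+}\cap\partial\Omega$, which is ultimately what lets a single $M_2$ serve for all $\lambda$ in the interval.
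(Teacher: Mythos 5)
Your outline reproduces the paper's strategy almost exactly, and you correctly isolate the one genuinely new difficulty of the boundary case; but you leave that difficulty unresolved, and resolving it is precisely the content of this lemma. You propose to run the argument of Lemma \ref{interior} on the semiball $B_{4R}(\overline{x})\cap\Omega$ and then apply Theorem \ref{newmethod} there. That application is not justified: the set $B_{4R}(\overline{x})\cap\Omega$ has an edge along $\partial B_{4R}(\overline{x})\cap\partial\Omega$, so its boundary is not of class $C^{1,Dini}$, and Theorem \ref{newmethod} (resting on the Boundary Weak Harnack Inequality, Theorem \ref{teoBWHI}, whose constants depend on the $C^{1,Dini}$ representation of the boundary) is only available on domains satisfying the interior $C^{1,Dini}$-paraboloid condition with uniform constants. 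You flag this yourself as ``the main obstacle'' and say ``one must verify'' the paraboloid condition and sort out which distance $d(x)$ is meant, but you never supply the verification, and as posed on the semiball it does not go through.

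The paper's fix is a single geometric move that is absent from your proposal: one chooses a subdomain $\Omega_1\subset\Omega$ with $\partial\Omega_1$ of class $C^{1,Dini}$ such that $B_{2R}(\overline{x})\cap\Omega\subset\Omega_1$ and such that $M(x)\geq\mu_1 I_n$, $c^-\equiv 0$ and $c^+\gneqq 0$ hold throughout $\Omega_1$ (possible by \eqref{A} and the regularity of $\partial\Omega$), and then runs the entire interior argument with $\Omega_1$ in place of $B_{4R}(\overline{x})$: the comparison function $z_0$ is taken as the solution of \eqref{6.6} in $H_0^1(\Omega_1)$, the inequalities \eqref{v_1} and \eqref{w_2} are derived in $\Omega_1$, and Theorem \ref{newmethod} is applied on $\Omega_1$, whose boundary is genuinely $C^{1,Dini}$ so that $d(x)$ unambiguously denotes the distance to $\partial\Omega_1$ and the constants are controlled. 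Since $B_R(\overline{x})\cap\Omega\subset\Omega_1$, the resulting bound restricts to the claimed $M_2$. Everything else in your write-up (the local sign conditions from \eqref{A}, the exponential changes, the role of $z_0$, uniformity in $\lambda\in[\Lambda_1,\Lambda_2]$) matches the paper; the missing subdomain construction is the gap you need to fill.
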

	\begin{proof}
		This proof is very similar to the previous one, we only need to observe that our assumptions allow us to find $\Omega_1\subset\Omega$ with $\partial\Omega_1$ of class $C^{1,\mathcal{D}ini}$ such that $B_{2R}(\overline{x})\cap \Omega\subset\Omega_1$ and $M(x)\geq \mu_1 I_n>0$, $c^-(x)\equiv 0$ and $c^+(x)\gneqq 0$ in $\Omega_1$.
		Hence, for $i=1$ note that \eqref{w_i} turn into \eqref{6.5} in $\Omega_1$ instead of $B_{4R}(\overline{x})$. Then, if $z_0$ is the solution to \eqref{6.6} in $H^1_0(\Omega_1)$ instead of $H^1_0(B_{4R}(\overline{x}))$, as in Lemma \ref{interior}, we get $z_0\in C(\overline{\Omega}_1)$ and $\overline{C}>0$ depending on the usual quantities such that $-\overline{C}\leq z_0\leq 0$ in $\Omega_1$.
		In addition, defining $v_1$ as before, we observe  that $v_1$ satisfies the equation \eqref{v_1} in $\Omega_1$ and $v_1>0$ on $\overline{\Omega}_1$. Therefore, arguing exactly as Lemma \ref{interior} we deduce \eqref{v_1},\eqref{w_2} and then we are able to apply Theorem \ref{newmethod} getting an upper bound to $u$ in $\Omega_1$.
	\end{proof}
	
	\begin{proof}[Proof of Theorem \ref{6.3}] In view of the Lemmas \ref{interior} and \ref{exterior} we have the existence of a uniform a priori upper bound on $u$ in a neighborhood of any fixed point $\overline{x} \in\overline{\Omega}_{c^+}$. Then, by applying a topological approach relying on the derivation of a priori bounds, this proof follows the same lines as the proof of the Interior Weak Harnack inequality - IWHI, for details see \cite{MR4030257}.
	\end{proof}
	We will now see that, under our assumptions, any solution to problem \eqref{$P_lambda$} is bounded from below, even when $\lambda\to 0$, $\lambda>0$.
	\begin{theorem}[\bf A Priori Lower Bound]\label{lowerbound}
		Under the standing assumptions on problem \eqref{$P_lambda$}, including hypothesis \eqref{A},  let $\Lambda_2>0$.
		Then, every supersolution $u$ to \eqref{$P_lambda$} satisfies
		\begin{align*}
			\|u^-\|_{L^\infty}\leq C \mbox{ for all } \lambda\in [0,\Lambda_2], \quad 	\mbox{where\ } C= C(n,p,\nu_1, \Omega, \Lambda_2,\|c\|_{L^p(\Omega)},\|h^-\|_{L^p(\Omega)}).
		\end{align*}
	\end{theorem}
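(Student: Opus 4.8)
The plan is to reduce the global lower bound to a local estimate on $\overline{\Omega}_{c^+}$ and to run there a Harnack-type analysis, but now tracking the behaviour of $u$ as $u\to-\infty$. First I would invoke Lemma \ref{lbound}: since it yields $u\geq -\sup_{\Omega_{c^+}}u^- - M$ with a universal constant $M$, one has $\|u^-\|_{L^\infty(\Omega)}\leq \sup_{\Omega_{c^+}}u^- + M$, so it suffices to bound $u^-$ on $\Omega_{c^+}$. The decisive gain is that, by hypothesis \eqref{A}, $c^-\equiv 0$ on a neighbourhood $N$ of $\Omega_{c^+}$; there $c_\lambda=\lambda c^+\geq 0$, so only the ``bad'' zero-order term survives, and this is precisely the region where a naive maximum principle fails once $\Lambda_2>\gamma_1$.

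On $N$ I would apply the exponential change of Lemma \ref{exponentialchange}, adapted to the lower tail, setting $q:=e^{\nu_1 u}$ (equivalently $q=1+\nu_1 w_1$). A computation as in \eqref{w_i}, using $(M(x)Du,Du)\geq\mu_1|Du|^2\geq\nu_1(A(x)Du,Du)$, shows that the quadratic gradient term is absorbed and that $q>0$ is a supersolution of
\[
-\divi(A(x)Dq)\geq \lambda c^+(x)\,q\ln q + \nu_1\,q\,h(x)\qquad\text{in } N,
\]
with $q=1$ on $\partial\Omega$ (where $u=0$). The purpose of this substitution is twofold: the right-hand side now degenerates as $q\to 0^+$ (since $q\ln q\to0$ and $qh\to0$), so that a lower bound for $u$ becomes equivalent to keeping the positive supersolution $q$ away from $0$; and the genuinely superlinear nature of the problem is displayed cleanly through $q\ln q$.

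I would then localise around a fixed $\overline{x}\in\overline{\Omega}_{c^+}$, distinguishing $\overline{x}\in\Omega_{c^+}\cap\Omega$ (a ball $B_R(\overline{x})$) from $\overline{x}\in\Omega_{c^+}\cap\partial\Omega$ (a semiball), exactly as in Lemmas \ref{interior} and \ref{exterior}, and apply the weak Harnack inequality: interiorly the IWHI, and up to the boundary the Boundary Weak Harnack Inequality of Theorem \ref{teoBWHI}, which is the essential tool here. Since the source is controlled by $\|c\|_{L^p}$ and $\|h^-\|_{L^p}$ and vanishes where $q$ is small, the estimate is meant to propagate the normalisation $q=1$ inward along a finite Harnack chain anchored at $\partial\Omega$, forbidding $q$ from collapsing to $0$ on $\Omega_{c^+}$; this would give $\inf_{\Omega_{c^+}}q\geq\delta>0$, i.e. $u\geq\nu_1^{-1}\ln\delta$ there, with $\delta$ depending only on $n,p,\nu_1,\Omega,\Lambda_2,\|c\|_{L^p},\|h^-\|_{L^p}$. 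The estimate is uniform for $\lambda\in[0,\Lambda_2]$: at $\lambda=0$ the source reduces to $\nu_1 qh$ and the problem is coercive, while for $\lambda\in(0,\Lambda_2]$ the parameter enters only through $\lambda\leq\Lambda_2$. Combining with Lemma \ref{lbound} then yields the global bound.

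The main obstacle is exactly the control of the superlinear source $\lambda c^+ q\ln q$ on $\Omega_{c^+}$, where the zero-order term has the wrong sign for a comparison argument: an additive Stampacchia-type estimate bounds $u^-$ only for small data, and the eigenvalue obstruction ($\Lambda_2>\gamma_1$) rules out a linear maximum principle. Overcoming it relies on the boundary weak Harnack inequality providing a \emph{multiplicative} lower bound for the positive supersolution $q$ whose source degenerates as $q\to0$, so that the boundary value $q=1$ can be transported into $\Omega_{c^+}$ quantitatively. The most delicate point is the boundary case $\overline{x}\in\Omega_{c^+}\cap\partial\Omega$, which requires the $C^{1,Dini}$ boundary version of the inequality (Theorem \ref{teoBWHI}), handled as in Lemma \ref{exterior}; this is the step that had not previously been carried out for $\overline{x}\in\partial\Omega$.
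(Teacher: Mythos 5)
Your overall strategy is not the one the paper uses, and it contains a genuine gap at its central step. The paper's proof does not localise on $\Omega_{c^+}$ and does not use the Boundary Weak Harnack Inequality at all for the lower bound: it works globally, sets $U=u^-$, performs the exponential change $w=(1-e^{-\nu_1 U})/\nu_1\in[0,1/\nu_1)$, takes the \emph{supremum} $\overline{w}$ of all such solutions of the resulting inequality, uses a boundary Lipschitz bound to show $\overline{w}\leq C\|f^+\|_{L^p}d(x)$ (so $\overline{w}$ stays away from $1/\nu_1$ near $\partial\Omega$), and then derives a contradiction from an unbounded sequence of negative parts by applying the V\'azquez (nonlinear) strong maximum principle to $z=1-\nu_1\overline{w}$, which is a nonnegative supersolution of $-\divi(A(x)Dz)+\nu_1h^-z\geq -c_\lambda|\ln z|z$ vanishing at an interior point. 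The uniform constant comes from this compactness/contradiction argument, not from a quantitative Harnack chain.

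The gap in your version is the claim that the weak Harnack inequality ``transports'' the boundary normalisation $q=1$ inward and yields $\inf_{\Omega_{c^+}}q\geq\delta>0$. Theorem \ref{teoBWHI} bounds $\inf q/d$ from below by an $L^\varepsilon$-average of $q/d$ over the \emph{same} region minus the $L^p$-norm of the source; it gives no mechanism to anchor at the boundary value when that average may itself be small, which is exactly what you are trying to exclude. Worse, the effective zero-order coefficient in your equation for $q$ is $\lambda c^+(x)\ln q$, which is unbounded precisely where $q\to 0$, so the hypotheses of the Harnack inequality (coefficients in $L^p$) fail, and the constants along any Harnack chain would depend on $\inf q$ — the quantity being estimated. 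This logarithmic degeneracy is exactly why the paper must invoke the nonlinear SMP of \cite{multiplicidade} and \cite{MR4274882} (an Osgood-type condition on $s|\ln s|$) rather than a Harnack argument. A secondary issue: reducing to $\Omega_{c^+}$ via Lemma \ref{lbound} imports a constant $M=2\|\widetilde u\|_\infty$ depending on a particular solution, which is not among the dependencies claimed in the statement; the paper's global argument avoids this.
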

	\begin{proof}
		First observe that both $U_1=-u$ and $U_2=0$ are subsolution of
		\begin{align*}
			-\divi(A(x)DU)\leq c_\lambda U -(M(x)DU,DU)+h^-(x) \mbox{ in }\Omega.
		\end{align*}
		Then, these functions are also subsolutions of
		\begin{eqnarray*}
			\left\{
			\begin{array}{rll}
				-\divi(A(x)DU)+ \mu_1|Du|^2&\leq c_\lambda U +h^-(x) &\mbox{ in } \Omega\\
				U&\leq 0 &\mbox{ on } \partial\Omega
			\end{array}
			\right.
		\end{eqnarray*}
		and so is $U:=u^-=\max\{U_1,U_2\}$, as the maximum of subsolutions. Moreover, $U\geq 0$ in $\Omega$ and $U=0$ on $\partial\Omega$.
		We make the following exponential change of variables
		$			w:=\dfrac{1-e^{-\nu_1U}}{\nu_1}.
	$
		From Lemma \ref{exponentialchange},
		\[-\divi(A(x)Dw)\leq (1-\nu_1 w) \left[c_\lambda(x) U +h^-(x) \right],\]
	hence,	we know that $w$ is a weak solution to
		\begin{align}\tag{$Q_\lambda$}\label{w}
			\left\{
			\begin{array}{rll}
				-\divi(A(x)Dw)+\nu_1h^-(x)w&\leq  h^-(x)+\dfrac{c_\lambda(x)}{\nu_1}\ln (1-\nu_1w)(1-\nu_1w)  &\mbox{ in } \Omega\\
				w&= 0 &\mbox{ on } \partial\Omega.
			\end{array}
			\right.
		\end{align}
		Now set $w_1:=\textstyle\dfrac{1-e^{-\nu_1u_1^-}}{\nu_1}$, where $u_1$ is some fixed supersolution to \eqref{$P_lambda$}, $\lambda\geq 0$, Note that if there was not such supersolution, we had nothing to prove. Then, from the above conclusions, $w_1\in[0,1/\nu_1)$ is a solution to \eqref{w}.
		Defining
		\begin{align*}
			\overline{w}:=\sup \mathcal{A}, \mbox{ where } \mathcal{A}:=\{w: w \mbox{ is a solution to \eqref{w}; } 0\leq w <1/\nu_1 \mbox{ in } \Omega \},
		\end{align*}
		we observe that $\mathcal{A}\ne\emptyset$, since $w_1\in \mathcal{A}$, and also that $w_1\leq \overline{w}\leq 1/\nu_1$ in $\Omega$. Further, as a supremum of supersolutions, $\overline{w}$  is a supersolution, with $\overline{w}=0$ on $\partial \Omega$, i.e. $\overline{w}$ is a weak solution to \eqref{w}.
		Then,
		\begin{align*}
			f(x)&:=h^-(x)+\frac{c_\lambda(x)}{\nu_1}|\ln (1-\nu_1\overline{w})|(1-\nu_1\overline{w})\in L^p_+(\Omega)\\
			\mbox{with }
			\|f^+\|_{L^p(\Omega)}&\leq\|h^-\|_{L^p(\Omega)}+\frac{1}{\nu_1}\left(\Lambda_2\|c^+\|_{L^p(\Omega)}+\|c^-\|_{L^p(\Omega)}\right)C_0,
		\end{align*}
		since $A(\overline{w}):=|\ln (1-\nu_1\overline{w})|(1-\nu_1\overline{w})\leq C_0$.
		Therefore, by applying the Boundary Lipschitz Bound \cite[Lemma 2.14]{fio}, we conclude that
		\begin{align*}
			\overline{w}\leq C\|f^+\|_{L^p(\Omega)}d(x)\rightarrow 0 \mbox{ as }x\rightarrow\partial\Omega,
		\end{align*}
	since $d(x)$ denotes the distance between $x$ and $\partial \Omega$. Hence, $\overline{w}\not\equiv 1/\nu_1$, however,  $\overline{w}$ may be equal to $1/\nu_1$ at some interior points.
		In order to complete the proof, we argue by contradiction. Assume that there is a sequence of supersolutions $u_k$ to \eqref{$P_lambda$} in $\Omega$ with unbounded negative parts, then there exists a subsequence such that
		\begin{align*}
			u_k^-(x_k)=\|u_k^-\|_{L^\infty}\rightarrow +\infty, \quad x_k\in \overline{\Omega}, \quad x_k\rightarrow x_0\in\overline{\Omega} \mbox{\ as \ }k\rightarrow\infty,
		\end{align*}
		with $x_k\in \Omega$ for large $k$, since $u_k\geq 0$ on $\partial\Omega$. It implies that the respective sequence $(w_k(x_k))$ satisfies
		\begin{align*}
			w_k(x_k)=\frac{1-e^{-\nu_1 u_k^-(x_k)}}{\nu_1}\rightarrow \frac{1}{\nu_1}, \qquad w_k\in \mathcal{A}.
		\end{align*}
	Hence, for every $\varepsilon>0$, there exists some $k_0\in \mathbb{N}$ such that
		\begin{align*}
			\frac{1}{\nu_1}-\varepsilon \leq w_k(x_k)\leq \overline{w}(x_k)\leq \frac{1}{\nu_1},\mbox{ for all } k\geq k_0.
		\end{align*}
		Thus,
	$
		\overline{w}(x_0)\geq \displaystyle\lim_{x_k\rightarrow x_0}\overline{w}(x_k)=\displaystyle\lim_{k\rightarrow \infty} \overline{w}(x_k) ={1}/{\nu_1},
	$
	and $x_0 \in\Omega$, since $\overline{w}=0$ on $\partial\Omega$. Moreover, $\overline{w}(x_0)={1}/{\nu_1}$.
		Finally, set $z:=1-\nu_1\overline{w}$ and observe that
		\begin{align*}
			\divi(A(x)Dz)&=-\nu_1 \divi(A(x)D\overline{w})\leq \nu_1(1-\nu_1\overline{w}) \left[\frac{c_\lambda(x)}{\nu_1} |\ln(1-\nu_1\overline{w})| +h^-(x) \right] \\
			&= c_\lambda(x)|\ln z|z +\nu_1h^-(x)z.
		\end{align*}
		Then $z$ is a supersolution to
		\begin{align*}
			\left\{
			\begin{array}{rll}
				-\divi(A(x)Dz)+\nu_1h^-(x)z&\geq  -c_\lambda(x)|\ln z|z &\mbox{ in } \Omega\\
			z(x_0)=0 \quad \text{and} \quad	z&\gneqq 0 &\mbox{ in } \Omega.\\
			\end{array}
			\right.
		\end{align*}
		But this contradicts the nonlinear version of the SMP \cite{MR4274882}[Theorem 1.1], see also \cite[Lemma 5.3]{multiplicidade}, for equations in nondivergence form,
		which says that  $z\equiv 0$ or $z>0$ in $\Omega$.
	\end{proof}
	
	\section{Main Results}\label{results}
\quad \	This section is devoted to prove  our main results. We start by proving a lemma, which is going be useful in order to deal with degree arguments.
	
	\begin{lemma}\label{existssub}
		Under assumption \eqref{A} for every $\lambda>0$, there exists a strict subsolution $v_\lambda$ to \eqref{$P_lambda$} such that, every supersolution $\beta$ to \eqref{$P_lambda$} satisfies $v_\lambda\leq\beta$.
	\end{lemma}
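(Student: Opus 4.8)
The plan is to produce $v_\lambda$ as the inverse exponential image of the maximal solution of the auxiliary problem \eqref{w} constructed in the proof of Theorem \ref{lowerbound}. Fix $\lambda>0$ and apply that theorem with $\Lambda_2=\lambda$: it provides a maximal weak solution $\overline{w}$ of \eqref{w} with $0\le\overline{w}<1/\nu_1$ in $\Omega$ and $\overline{w}=0$ on $\partial\Omega$, where $\nu_1=\mu_1\vartheta$. Since $\overline{w}<1/\nu_1$ in $\Omega$, the function $v_\lambda:=\tfrac{1}{\nu_1}\ln(1-\nu_1\overline{w})$ is a well-defined bounded function with $v_\lambda\le 0$ and $v_\lambda=0$ on $\partial\Omega$; equivalently $\overline{U}:=-v_\lambda\ge 0$ is exactly the negative-part envelope appearing in Theorem \ref{lowerbound}, related to $\overline{w}$ through the change $\overline{w}=\nu_1^{-1}(1-e^{-\nu_1\overline{U}})$.

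First I would check that $v_\lambda$ is a subsolution to \eqref{$P_lambda$}. Since $\overline{w}$ solves \eqref{w} with equality, dividing by $1-\nu_1\overline{w}>0$ gives $(1-\nu_1\overline{w})^{-1}(-\divi(A(x)D\overline{w}))=h^-+c_\lambda\overline{U}$. Applying the right-hand inequality of Lemma \ref{exponentialchange} with $u=\overline{U}$ and $m=\nu_1$ (so that the associated $w$ is exactly $\overline{w}$) turns this identity into $-\divi(A(x)D\overline{U})\ge c_\lambda\overline{U}+\vartheta^{-1}\nu_1|D\overline{U}|^2+h^-$. Negating and using $Dv_\lambda=-D\overline{U}$ together with $c_\lambda\overline{U}=-c_\lambda v_\lambda$, I obtain $-\divi(A(x)Dv_\lambda)\le c_\lambda v_\lambda-\vartheta^{-1}\nu_1|Dv_\lambda|^2-h^-$. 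Because $(M(x)Dv_\lambda,Dv_\lambda)\ge 0\ge-\vartheta^{-1}\nu_1|Dv_\lambda|^2$ and $h\ge-h^-$, the right-hand side is bounded above by $c_\lambda v_\lambda+(M(x)Dv_\lambda,Dv_\lambda)+h$, which is precisely the subsolution inequality for \eqref{$P_lambda$}.

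Next I would show $v_\lambda\le\beta$ for every supersolution $\beta$. As in the proof of Theorem \ref{lowerbound}, $\beta^-$ is a subsolution of the associated $U$-problem, so its exponential image $w_\beta:=\nu_1^{-1}(1-e^{-\nu_1\beta^-})$ is a subsolution of \eqref{w} with $0\le w_\beta<1/\nu_1$. Maximality of $\overline{w}$ (equivalently, comparison for \eqref{w}) gives $w_\beta\le\overline{w}$, and since $s\mapsto\nu_1^{-1}(1-e^{-\nu_1 s})$ is increasing this yields $\beta^-\le\overline{U}$, i.e. $\beta\ge-\overline{U}=v_\lambda$ on $\{\beta<0\}$; on $\{\beta\ge 0\}$ the inequality is trivial since $v_\lambda\le 0$. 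Hence $v_\lambda\le\beta$ throughout $\Omega$.

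Finally, for strictness, given a solution $u$ with $u\ge v_\lambda$ the difference $\phi:=\overline{w}-w_u\ge 0$ (with $w_u$ the exponential image of $u^-$) satisfies, after linearising the semilinear zero-order term of \eqref{w}, a linear inequality $-\divi(A(x)D\phi)+b(x)\phi\ge 0$ with $b\in L^p(\Omega)$; by the Strong Maximum Principle (Theorem \ref{SMP}), combined with the Boundary Weak Harnack Inequality (Theorem \ref{teoBWHI}) to reach $\partial\Omega$, either $\phi\equiv 0$ or $\phi>0$ in $\Omega$, and translating back through the monotone change of variables gives $v_\lambda\ll u$ whenever $u\not\equiv v_\lambda$. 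I expect the main obstacle to be this last step, namely justifying that $\overline{w}$ is a genuine solution of \eqref{w} (not merely a subsolution), so that both inequalities of Lemma \ref{exponentialchange} are simultaneously available, and controlling the linearised coefficient $b$ up to the boundary so that Theorem \ref{teoBWHI} indeed delivers the strong ordering $\ll$ rather than a mere inequality.
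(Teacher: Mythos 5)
Your construction has two genuine gaps, and the first is fatal already at the very first step, not only at the strictness step where you flag it. You need $\overline{w}$ to solve \eqref{w} \emph{with equality} in order to invert the exponential change of variables, but \eqref{w} is a one-sided (subsolution) inequality and $\overline{w}=\sup\mathcal{A}$ is a supremum of functions satisfying that inequality; the proof of Theorem \ref{lowerbound} establishes nothing more than that $\overline{w}$ again satisfies the ``$\leq$'' direction. That direction is useless for your purpose: to conclude $-\divi(A(x)D\overline{U})\geq c_\lambda(x)\overline{U}+\mu_1|D\overline{U}|^2+h^-(x)$, which is what makes $v_\lambda=-\overline{U}$ a subsolution of $(P_\lambda)$, you need a \emph{lower} bound on $(1-\nu_1\overline{w})^{-1}\left(-\divi(A(x)D\overline{w})\right)$, i.e.\ the supersolution half of \eqref{w}, and a supremum of subsolutions does not satisfy it without a Perron-type argument that appears nowhere in the paper. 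With only the available inequality, Lemma \ref{exponentialchange} tells you that $\overline{U}$ is a subsolution of the $U$-problem, which after changing sign makes $v_\lambda$ a supersolution-type object --- the wrong direction entirely.

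Second, even granting equality, your $v_\lambda$ carries no strict perturbation, so strictness can fail: when $h\equiv 0$ (a case in which the lemma is explicitly invoked later, in the proof of Theorem \ref{hequiv0}) one can have $\overline{w}\equiv 0$ and hence $v_\lambda\equiv 0$, which is itself a solution of $(P_\lambda)$ and therefore cannot be a strict subsolution. The paper's proof avoids both problems by a different route: it builds a linear barrier $\alpha_k\ll 0$ lying below every supersolution, truncates the zero-order term at level $-k$, and --- crucially --- replaces $h(x)$ by $-h^-(x)-1$. The minimal solution $v_k$ of this truncated problem between $\alpha_k$ and any supersolution is a subsolution of $(P_\lambda)$ because $-h^--1\leq h$ and $v_k\geq -C>-k$, lies below every supersolution by minimality, and is \emph{strict} precisely because of the extra $-1$, which yields the strict inequality $-\divi(A(x)Dw)-(M(x)[Du+Dv_k],Dw)\geq c_\lambda(x)w+h^+(x)+1$ for $w=u-v_k$ and hence $u\gg v_k$ by the Maximum Principle. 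To repair your argument you would have to inject an analogous strict perturbation into the auxiliary problem and solve it exactly, at which point you are no longer using the objects from Theorem \ref{lowerbound} at all.
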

	
	\begin{proof}
		Let $C>0$ be given by Theorem \ref{lowerbound} and $\overline{M}$ be given by Theorem \ref{6.3} such that for every supersolution $\beta$ of
		\begin{align*}
			\left\{
			\begin{array}{rll}
				-\divi(A(x)Du)& = c_\lambda(x)u+(M(x)Du,Du)-h^-(x)-1 &\mbox{ in } \Omega\\
				u&=0 &\mbox{ on } \partial\Omega,
			\end{array}
			\right.
		\end{align*}
		we have $\beta\geq-C$.
		Let $k>C$ and consider $\alpha_k$ the solution to
		\begin{align*}
			\left\{
			\begin{array}{rll}
				-\divi(A(x)Dv)+c^-(x)v& = -\lambda k c^+(x)-h^-(x)-1 &\mbox{ in } \Omega\\
				v&=0 &\mbox{ on } \partial\Omega.
			\end{array}
			\right.
		\end{align*}
		As $-\lambda k c^+(x)-h^-(x)-1<0$ we have $\alpha_k\ll 0 $ by the SMP and the Hopf lemma.
		
		We claim that every supersolution $\beta$ to \eqref{$P_lambda$} satisfies $\beta\geq \alpha_k$. In fact, taking regular supersolutions $\beta_1,\cdots,\beta_l$ to \eqref{$P_lambda$} such that $\beta=\min\{\beta_j: 1\leq j\leq l\}$ and
		setting $w=\beta_j-\alpha_k$ for some $1\leq k\leq l$ we have
		\begin{align*}
			\left\{
			\begin{array}{rll}
				-\divi(A(x)Dw)+c^-(x)w&\geq \lambda c^+(x)(\beta_j+k)+\mu_1|D\beta_j|^2\geq 0 &\mbox{ in } \Omega\\
				w&=0 &\mbox{ on } \partial\Omega.
			\end{array}
			\right.
		\end{align*}
		Hence, by the Maximum Principle $w\geq 0 $ i.e. $\beta_j\geq \alpha_k$ and this proves the claim.
		
		Now, consider the problem
		\begin{align}\label{t_k}
			-\divi(A(x)Dv)= &c_\lambda(x)T_k(v)+(M(x)Dv,Dv)-h^-(x)-1,\\
			\text{where} \qquad \qquad \qquad \qquad
			T_k(v)&=
			\left\{
			\begin{array}{rll}
				-k, & \mbox{ if } v\leq -k,\\
				v,& \mbox{ if } v>-k.
			\end{array}
			\right. \nonumber \qquad\qquad \qquad \qquad \qquad\qquad\qquad\qquad
		\end{align}
	
		Observe that $\beta=T_k(\beta)$  is a supersolution to \eqref{t_k}  and $\alpha_k$ is a subsolution to \eqref{t_k}. Note that $-\lambda k c^+(x)=\lambda c^+(x) T_k(\alpha_k)$, 
		$c^-(x)k=-c^-(x)T_k(\alpha_k)$ and hence, by the standard method of sub and supersolutions \eqref{t_k} has a minimal solution $v_k$ with $\alpha_k\leq v_k\leq \beta$. Furthermore, we also observe  that every supersolution $\widetilde \beta$ to \eqref{$P_lambda$} satisfies $\widetilde \beta\geq v_k$.
		In fact, since $\widetilde \beta$ is a supersolution to \eqref{$P_lambda$}, we have $\widetilde \beta \geq \alpha_k$ and by the construction of \eqref{t_k}, every supersolution $\widetilde \beta$ to \eqref{$P_lambda$} is also a supersolution to \eqref{t_k}, then the minimality  of $v_k$ implies that $v_k\leq \widetilde \beta$.
		
		Now we observe that $v_k$ is a subsolution to \eqref{$P_lambda$}, since $v_k\geq -C>-k$ and it satisfies
		\begin{align*}
			-\divi(A(x)Dv_k)&=c_\lambda(x)T_k(v_k)+(M(x)Dv_k,Dv_k)-h^-(x)-1\\
			&\leq  c_\lambda(x)v_k+(M(x)Dv_k,Dv_k)+h(x).
		\end{align*}
		Furthermore, we claim that $v_k$ is strict subsolution to \eqref{$P_lambda$}.
		In order to see this, let $u$ be a solution to \eqref{$P_lambda$} with $u\geq v_k$.
		Then, $w=u-v_k$ satisfies
		\begin{align*}
			-\divi(A(x)Dw)&\geq c_\lambda(x)u+(M(x)Du,Du)+h(x)-c_\lambda(x)v_k-(M(x)Dv_k,Dv_k)+h^-(x)+1\\
			&=c_\lambda(x)w+(M(x)[Du+Dv_k],Dw)+h^+(x)+1,
		\end{align*}
		which means that
		\begin{align*}
			\left\{
			\begin{array}{rll}
				-\divi(A(x)Dw) -(M(x)[Du+Dv_k],Dw) &\geq c_\lambda(x)w+h^+(x)+1 &\mbox{ in } \Omega\\
				w&=0 &\mbox{ on } \partial\Omega.
			\end{array}
			\right.
		\end{align*}
	Therefore, by the Maximum Principle, we deduce that $w\gg 0$, namely, $u\gg v_k$.
	\end{proof}
	
	By adapting \cite[Lemma 5.1]{MR4030257} to our setting, we obtain the following auxiliary result, which is going to be useful for proving Theorem \ref{teo5.2}.
	\begin{lemma}\label{nosolution}
		Under the assumptions of Theorem \ref{teo5.2}, assume that ($P_0$) has a solution $u_0$ such that $c^+(x)u_0\gneqq 0$. Then, there exists $\overline{\Lambda}\in(0,\infty)$ such that, for $\lambda\geq \overline{\Lambda}$, the problem \eqref{$P_lambda$} has no solution $u$ with $u\geq u_0$ in $\Omega$.
	\end{lemma}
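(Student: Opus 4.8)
The plan is to argue by contradiction and extract an \emph{a priori} upper bound on $\lambda$ by testing the equation against the first eigenfunction, exploiting that the quadratic gradient term has a fixed sign. Since here $c^-\equiv 0$, we have $c_\lambda=\lambda c^+$, so that $\gamma_1>0$ together with its eigenfunction $\varphi_1$ (with $\varphi_1>0$ in $\Omega$, $\varphi_1=0$ on $\partial\Omega$ and $-\divi(A(x)D\varphi_1)=\gamma_1 c^+\varphi_1$) are available from \eqref{eig1}. Suppose, for some $\lambda$, that $u$ is a solution to \eqref{$P_lambda$} with $u\geq u_0$ in $\Omega$; by regularity $u,\varphi_1\in H_0^1(\Omega)\cap L^\infty(\Omega)$, so each is an admissible (bounded) test function for the other's equation.

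First I would test the weak formulation of \eqref{$P_lambda$} with $\varphi_1$ and the eigenvalue equation \eqref{eig1} with $u$. Using that $A$ is symmetric (or, if not, replacing $\varphi_1$ by the principal eigenfunction of the adjoint operator, which shares the eigenvalue $\gamma_1$), the two bilinear terms $\int_\Omega A\,Du\cdot D\varphi_1$ coincide, and subtracting yields the identity
\[
(\gamma_1-\lambda)\int_\Omega c^+\varphi_1 u=\int_\Omega (M(x)Du,Du)\,\varphi_1+\int_\Omega h\varphi_1 .
\]
The decisive structural observation is that the quadratic term is nonnegative: since $M(x)\geq\mu_1 I_n>0$ and $\varphi_1\geq 0$, we have $\int_\Omega (M(x)Du,Du)\varphi_1\geq 0$, whence
\[
(\gamma_1-\lambda)\int_\Omega c^+\varphi_1 u\geq\int_\Omega h\varphi_1 .
\]

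Next I would quantify the left-hand side. Set $a:=\int_\Omega c^+\varphi_1 u_0$. Because $c^+u_0\gneqq 0$ and $\varphi_1>0$ in $\Omega$, the integrand is nonnegative and strictly positive on a set of positive measure, so $a>0$; moreover $u\geq u_0$ and $c^+\varphi_1\geq 0$ give $\int_\Omega c^+\varphi_1 u\geq a>0$. Assuming $\lambda>\gamma_1$, the factor $\gamma_1-\lambda$ is negative, so multiplying $\int_\Omega c^+\varphi_1 u\geq a$ by it reverses the inequality and, combined with the previous display, forces
\[
\int_\Omega h\varphi_1\leq(\gamma_1-\lambda)a,\qquad\text{that is}\qquad \lambda\leq\gamma_1-\frac{1}{a}\int_\Omega h\varphi_1 .
\]
Thus a solution $u\geq u_0$ can exist only for $\lambda$ below this finite threshold. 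Choosing any $\overline{\Lambda}\in(0,\infty)$ with $\overline{\Lambda}>\max\bigl\{\gamma_1,\ \gamma_1-a^{-1}\int_\Omega h\varphi_1\bigr\}$ then makes the bound fail for every $\lambda\geq\overline{\Lambda}$, which is exactly the asserted nonexistence.

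The routine points to verify are the admissibility of $\varphi_1$ and $u$ as test functions for the respective equations (handled by the $L^\infty$ bounds together with a density argument adapted to the quadratic-gradient weak formulation) and the availability of the principal eigenpair $(\gamma_1,\varphi_1)$ in the present low-regularity setting. The only genuinely delicate point is the symmetry issue in the integration by parts: if $A$ is not symmetric one must work with the eigenfunction of the adjoint operator $-\divi(A^{T}D\,\cdot)$, which has the same principal eigenvalue and a positive eigenfunction, so that the displayed identity still holds with $\varphi_1$ replaced by this adjoint eigenfunction. Everything else reduces to an elementary sign analysis, the crux being the use of $\int_\Omega(M(x)Du,Du)\varphi_1\geq 0$ to discard the nonlinear term while retaining the linear eigenvalue balance.
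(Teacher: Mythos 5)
Your proposal is correct and follows essentially the same route as the paper: test the equation against the first eigenfunction $\varphi_1$ of \eqref{eig1}, use $u\geq u_0$ and $c^+u_0\gneqq 0$ to bound $\int_\Omega c^+\varphi_1 u$ away from zero, and discard the nonnegative term $\int_\Omega(M(x)Du,Du)\varphi_1$ to force an upper bound on $\lambda$. Your additional care about the symmetry of $A$ (passing to the adjoint eigenfunction if needed) and about the admissibility of the test functions is a refinement the paper leaves implicit, but the argument is the same.
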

	\begin{proof}
		Let $\varphi_1>0$ the first eigenfunction of \eqref{eig1}. If \eqref{$P_lambda$} has a solution $u$ with $u\geq u_0$, multiplying
		\eqref{$P_lambda$} by $\varphi_1$ and integrating we obtain
		\begin{align*}
			\displaystyle\int_{\Omega}c_{\gamma_1}(x)u\varphi_1dx&= \displaystyle\int_{\Omega}A(x)D\varphi_1 Dudx=\displaystyle\int_{\Omega} c_\lambda(x)u\varphi_1 dx+\int_{\Omega}(M(x)Du,\varphi_1 Du)dx +\int_{\Omega} h(x)\varphi_1 dx,
		\end{align*}
		and hence $\lambda>\gamma_1$. As $u\geq u_0$, we have
		\begin{align*}
			0&\geq (\lambda-\gamma_1)\displaystyle\int_{\Omega} c^+(x)u\varphi_1 dx + \mu_1\int_{\Omega}\varphi_1 |Du|^2 dx+\int_{\Omega} h(x)\varphi_1 dx\\
			&\geq (\lambda-\gamma_1)\displaystyle\int_{\Omega} c^+(x)u_0\varphi dx + \mu_1\int_{\Omega}\varphi_1 |Du|^2 dx+\int_{\Omega} h(x)\varphi_1 dx,
		\end{align*}
		which gives a contradiction for $\lambda$ large enough.
	\end{proof}

	In view of the previous results, we now are able to prove Theorem \ref{teo5.2}.
	
	\begin{proof}[Proof of Theorem \ref{teo5.2}]
	Applying all previous results and adopting strategies presented in \cite{ACJT, ACJTuni, MR4030257} we give the proof of Theorem \ref{teo5.2} treating separately the cases $\lambda\leq 0$ and $\lambda>0$.\medskip
	\newline
	{\bf Case (i): $\lambda\leq 0$.} This case has been studied in previous works. We briefly recall the following argument. If ($P_0$) has a solution $u_0$, then $u_0$ is a supersolution to  \eqref{$P_lambda$}. By applying Lemma \ref{lbound} and the arguments found in \cite{ACJT}, we obtain the existence of a solution $u_\lambda$ of (\ref{$P_lambda$}) for any $\lambda<0$. We observe that the uniqueness of solutions for $\lambda\leq 0$ is ensured by \cite[Proposition 4.1]{ACJT}.
	On the other hand, for $\lambda \leq 0$, we have $c_\lambda(x)=\lambda c^+(x)-c^-(x)\leq -c^-(x)$ so by applying the Comparison Principle, we get $u_\lambda\leq u_0$. Moreover, setting $v_0=u_0-\|u_0\|_{\infty}$, by Lemma \ref{lbound} we see that $v_0$ is a subsolution to \eqref{$P_lambda$} for $\lambda<0$, so again by the Comparison Principle we get $u_0-\|u_0\|_{\infty}\leq u_\lambda.$ \medskip
	\newline{\bf Case (ii): $\lambda>0$}.
	With the aim of showing the existence of a continuum of solution to \eqref{$P_lambda$}, for $\lambda\geq 0$ we introduce the auxiliary problem
	\begin{equation}\label{P2}\tag{$\overline{P_\lambda}$}\begin{cases}
		-\divi(A(x)Du)+u&=[c_\lambda(x)+1][(u-u_0)^++u_0]+\big(M(x)Du,Du\big)+h(x) \quad \mbox{ in } \Omega \\
\qquad \qquad \qquad \qquad  u &= 0 \quad \mbox{ on } \partial \Omega.
\end{cases}	\end{equation}
	In view of  \cite{DE}, more specifically,   \cite[Theorem 3,1]{RSS}, we know that, as in the case of problem (\ref{$P_lambda$}), any solution to problem \eqref{P2} belongs to $C^{1,\mathcal{D}ini}(\Omega)$.	
	Moreover, observe that $u$ is a solution to \eqref{P2} if and only if it is a fixed point of the operator $\overline{T}_\lambda$ defined by $\overline{T}_\lambda:C^1(\overline{\Omega})\rightarrow C^1(\overline{\Omega}):v\rightarrow u$ with $u$ the solution to
	\begin{align*}
		-\divi(A(x)Du)+u-\big(M(x)Du,Du\big)=[c_\lambda(x)+1][(v-u_0)^++u_0]+h(x).
	\end{align*}
	Applying  \cite[ Lemma 5.2]{ACJT}, we see that $\overline{T}_\lambda$ is completely continuous. Now, we define
	\begin{align*}
		\overline{\Sigma}:=\{(\lambda,u)\in\mathbb{R}\times \mathcal{C}(\overline{\Omega}), u \mbox{ solves \eqref{P2}}   \} \ \mbox{and split the rest of the proof into three steps.}
	\end{align*}
	{\bf Step 1:} {\it If $u$ is a solution to \eqref{P2}, then $u\geq  u_0$ and hence, it is a solution to \eqref{$P_lambda$}.}	Observe that $(u-u_0)^++u_0-u\geq 0$ and $\lambda c^+(x)[(u-u_0)^++u_0]\geq \lambda c^+(x)u_0\geq 0$.
	Then, we deduce that a solution $u$ to \eqref{P2} is a supersolution to
	\begin{equation}\label{7.1}
		-\divi(A(x)Du)=[c_\lambda(x)+1][(u-u_0)^++u_0]+\big(M(x)Du,Du\big)+h(x).
	\end{equation}
	Since $u_0$ is a solution to \eqref{$P_lambda$}, it implies that $u_0$ solves \eqref{7.1}. Thus, applying again the Comparison Principle  we get $u\geq u_0$.\medskip
	\newline
	{\bf Step 2:} {\it $u_0$ is the unique solution to ($\overline{P_0}$) as well as to the problem ($P_0$) and $i(I-\overline{T}_0,u_0)=1$.} For $\lambda=0$, if  $u$ is a solution to (\ref{7.1}), then by Step 1, $u\geq u_0$ and $u$ solves (\ref{$P_lambda$}). From Case (i), we conclude that $u=u_0$.
	In order to prove that $i(I-\overline{T}_0,u_0)=1$, we consider the operator  $S_t$ defined by
	$	S_t:C^1(\overline{\Omega})\rightarrow C^1(\overline{\Omega})$, given by  $S_t(v)=t\overline{T}_0v=u
$, where $u$ is the solution to
{\small	\begin{align*}
		\textstyle-\divi(A(x)Du)+u&=(M(x)Du,Du)+th(x) +t\big([-c^-(x)+1][u_0+(v-u_0)^+-(v-u_0-1)^+]\big).
	\end{align*}}
First, note that the complete continuity of $\overline{T}_\lambda$ follows from the fact that every solution $u$ to \eqref{P2} is $C^{\alpha}$ up to the boundary, and hence there exists $R>0$ such that for all $t \in [0,1]$ and all $v \in C^1(\overline{\Omega})$, it follows that
	$	\|S_tv\|_{L^\infty}<R.$
	Then, $I-S_t$ does not vanish on $\partial B_R(0)$ and
$$
	{	\deg(I-\overline{T}_0,B_R(0))= \deg(I-S_1,B_R(0))=\deg(I-S_0,B_R(0))=\deg(I,B_R(0))=1.}
$$
	Therefore, $\overline{T}_0$ has a fixed point $u_0$, which is a solution to $(\overline{P}_0)$. Applying the degree's properties, for all $\varepsilon>0$ small enough, it follows that
	$
	\displaystyle	\deg(I-\overline{T}_0,B_\varepsilon(0))=\deg(I-\overline{T}_0,B_R(0))=1.
	$
	Thus, for $\varepsilon \ll 1$, we conclude that
	$
		\displaystyle i(I-\overline{T}_0,u_0)=\lim_{\varepsilon\rightarrow 0}\deg(I-\overline{T}_0,B_\varepsilon(0))=1.
	$
	\medskip
	\newline
	{\bf Step 3:} {\it Existence and behavior of the continuum.} 	Proceeding as in \cite[Theorem 1.2]{CJ}, we are able to apply Theorem \ref{continuum}, which gives us a continuum $\mathcal{C}=\mathcal{C}^+\cup \mathcal{C}^-\subset \overline{\Sigma}$ such that
	\begin{align*}
		\mathcal{C}^+=\mathcal{C}\cap ([0,\infty)\times C(\overline{\Omega}))\mbox{ and } \mathcal{C}^-=\mathcal{C}\cap ((-\infty,0]\times C(\overline{\Omega})) \ \mbox{ are unbounded in} \ \mathbb{R}^{\pm}\times C(\overline{\Omega}).
	\end{align*}
	By Step 1, if $u \in \mathcal{C}^+$, then $u\geq u_0$ and it is a solution to \eqref{$P_lambda$}. Thus, applying Lemma \ref{nosolution} we infer that the projection of  $\mathcal{C}^+$ on $\lambda$-axis is $[0,\overline{\Lambda}]$, a bounded interval.
	A consequence  of Case (i) is that none of $\lambda\in (-\infty,0] $ is a bifurcation point from infinity to problem (\ref{$P_lambda$}), and then, we deduce that the projection of $\mathcal{C}^-$ on $\lambda$-axis is $(-\infty,0]$.
	Hence,
	$
		\mbox{Proj}_{\mathbb{R}}\mathcal{C}=\mbox{Proj}_{\mathbb{R}}\mathcal{C}^-\cup\mbox{Proj}_{\mathbb{R}}\mathcal{C}^+=(-\infty,\overline{\Lambda}],$ for  some $\overline{\Lambda}>0.
	$
	Finally, by Theorem \ref{6.3} for any $0<\Lambda_1<\Lambda_2$ there exists a priori bound for the solutions to \eqref{$P_lambda$}, for all $\lambda\in[\Lambda_1,\Lambda_2]$, then the projection of $\mathcal{C}\cap ([\Lambda_1,\Lambda_2]\times C(\overline{\Omega}))$ on $C(\overline{\Omega})$ is bounded.
	Since the component $\mathcal{C}^+$ is unbounded in $\mathbb{R}^+\times C(\overline{\Omega})$, its projection on the $C(\overline{\Omega})$ axis must be unbounded. In view of Case (i), the projection $\mathcal{C}^-$ on the $C(\overline{\Omega})$ is bounded.
	Hence,
$
		\mbox{Proj}_{C(\overline{\Omega})}\mathcal{C}=\mbox{Proj}_{C(\overline{\Omega})}\mathcal{C}^-\cup\mbox{Proj}_{C(\overline{\Omega})}\mathcal{C}^+=[0,+\infty).
$
	Therefore, we deduce that $\mathcal{C}$ must emanate from infinity on the right of axis $\lambda=0$.
	
	Now, we prove our multiplicity results in (iii). Since $\mathcal{C}$ contains $(0,u_0)$, with $u_0$ being the unique solution to ($P_0$),  from Case (ii) we know that $\mathcal{C}$ also emanates from infinity on the right of axis $\lambda=0$ and then, we conclude that there exists $\lambda_0\in (0,\overline{\Lambda})$ such that the problems (\ref{P2}) and \eqref{$P_lambda$} have at least two solutions satisfying $u\geq u_0$ for $\lambda\in(0,\lambda_0)$. Hence, the quantity
	\begin{align*}
		\overline{\lambda}:=\sup \{\mu>0: \forall \ \lambda\in(0, \mu), (P_\lambda) \mbox{ has at least two solutions} \} \quad \mbox{is well defined.}
	\end{align*}
	We claim that for all $\lambda\in (0, \overline{\lambda})$, the problem (\ref{$P_lambda$}) has at least two solutions with $u_{\lambda,1}\ll u_{\lambda,2}$.	Let us consider the strict subsolution $\alpha_\lambda$ given by Lemma \ref{existssub}. As $\alpha_\lambda\leq u$ for all $u$ solution to \eqref{$P_lambda$}, we can choose $u_{\lambda,1}$ as the minimal solution with $u_{\lambda,1}\geq \alpha$.
	Hence, we have $u_{\lambda,1}\lneqq u_{\lambda,2}$, otherwise there would exist a solution $u$ with $\alpha\leq u \leq \min\{u_{\lambda,1}, u_{\lambda,2}\}$, which contradicts the minimality of $u_{\lambda,1}$.
	Observe that, the function $\beta=(u_{\lambda,1}+u_{\lambda,2})/2$ is a supersolution to \eqref{$P_lambda$} which is not a solution.
	Now, for each $\xi\in \mathbb{R}^n$, we can define the function $\varphi(\xi):=(M(x)\xi,\xi)$ and observe that by assumption \eqref{A} we have $D^2(\varphi)>0$, and thus $\varphi$ is convex. With this, we obtain
	\begin{align*}
		-\divi(A(x)D\beta) &=-\frac{1}{2}\divi(A(x)Du_{\lambda,1})-\frac{1}{2}\divi(A(x)Du_{\lambda,2})\\
		&=c_\lambda(x)\beta+ \frac{1}{2}(M(x)Du_{\lambda,1},Du_{\lambda,1})+\frac{1}{2}(M(x)Du_{\lambda,2},Du_{\lambda,2})+h(x)\\
		&\gneqq c_\lambda(x)\beta+\varphi\big(\frac{Du_{\lambda,1}}{2}+\frac{Du_{\lambda,2}}{2}\big)+h(x) = c_\lambda(x)\beta+(M(x)D\beta,D\beta)+h(x).
	\end{align*}
	Let us prove that $\beta$ is a strict supersolution to (\ref{$P_lambda$}).
If  $u$ is a solution to (\ref{$P_lambda$}) with $u\leq \beta$, then $v:=\beta-u$ satisfies
	\begin{align*}
		-\divi(A(x)Dv)\gneqq c_\lambda(x)\beta&+(M(x)D\beta,D\beta)+h(x)-(M(x)Du,Du)-c_\lambda u -h(x)\\
		=(M(x)&[D\beta+Du],Dv)+c_\lambda v,\\
\text{and \ hence} \qquad \quad 	-\divi(A(x)Dv) &-(M(x)D\beta+Du,Dv)+c^-(x)v\gneqq \lambda c^+(x) v\geq 0. \quad \quad
	\end{align*}
	By Theorem \ref{SMP}, either $v\gg 0$ or $v\equiv 0$.
	If $v \equiv 0$, then $\beta=u$ is solution, which contradicts the definition of $\beta$.
	Then,  $\beta\gg u$. As $u_{\lambda,1}\lneqq \beta\lneqq u_{\lambda,2}$ we have, $u_{\lambda,1}\ll \beta\lneqq u_{\lambda,2}$ and hence  $u_{\lambda,1}\ll u_{\lambda,2}$.

	We finish the proof claiming that
if $\overline{\lambda}<\infty$, then the solution $u_{\overline{\lambda}}$ of ($P_{\overline{\lambda}}$) is unique.
In order to prove that ($P_{\overline{\lambda}}$) has at least one solution, take $\{\lambda_n\}\subset (0,\overline{\lambda})$ such that $\lambda_n\rightarrow \overline{\lambda}$ and by the regularity result \cite[Lemma 2.1]{ACJTuni} let $\{u_n \}\subset H^1(\omega)\cap W^{1,n}_{loc}(\Omega)\cap C(\overline{\Omega})$ be a sequence of corresponding solutions.
		By Theorem \ref{6.3}, there exists $M>0$ such that $\|u_n\|_{L^\infty}<M$ for all $n\in \mathbb{N}$, and hence in view of  the $C^{1,\mathcal{D}ini}$ global estimates \cite{DE}, we get $\|u_n\|_{C^{1,\mathcal{D}ini}(\overline{\Omega})}\leq C$.
		Then, up to a subsequence, $u_n\rightarrow u$ in $C^{1}_0(\Omega)$.
		From this strong convergence we easily observe that $u$ is a solution to  ($P_{\overline{\lambda}}$).
		
		Now we prove the uniqueness of the solution to  ($P_{\overline{\lambda}}$).
	We assume by contradiction that there exist two distinct solutions, $u_1$ and $u_2$ to problem ($P_{\overline{\lambda}}$), we prove that $\beta=(u_1+u_2)/2$ is a strict supersolution to  ($P_{\overline{\lambda}}$). Let us consider the strict subsolution $\alpha_{\overline{\lambda}}\ll \beta$ to problem ($P_{\overline{\lambda}}$) given by Lemma \ref{existssub} and look at the set
		\begin{align*}
			\overline{\mathcal{S}}=\{u\in C_0^1(\overline{\Omega})\mbox{; } \alpha\ll u\ll \beta,\|u\|_{C^1_0}<R\}
		\end{align*}
		for some $R>C>0$. Again, by the $C^{1,\mathcal{D}ini}$ estimates,
		\begin{align}\label{overlambda}
			\|u\|_{C^{1,\mathcal{D}ini}}\leq C \mbox{ for all } \mbox{ solution $u$ to } \eqref{$P_lambda$}\mbox{, } \lambda\in[\overline{\lambda}, \overline{\lambda}+1]
		\end{align}
		such that
		$deg(I-T_{\overline{\lambda}},\mathcal{\overline{\mathcal{S}}})=1$.
		Now, we prove the existence of $\varepsilon>0$ such that
		\begin{align}\label{existsvarepsilonover}
			deg(I-T_\lambda,\overline{\lambda})=1\mbox{, for all } \lambda\in[\overline{\lambda} ,\overline{\lambda}+\varepsilon].
		\end{align}
		There exists some $\varepsilon\in(0,1)$ such that there is no fixed points of $T_\lambda$ on the boundary of $\overline{\mathcal{S}}$ for all $\lambda$ in the preceding interval. Indeed, if this was not the case, there would exist a sequence $\lambda_k\rightarrow\overline{\lambda}$  with the respective solutions $u_k$ to problem ($P_{\lambda_k}$) belonging to $\overline{\mathcal{S}}$.
		Say $\lambda_k \in[\overline{\lambda} ,\overline{\lambda}+1]$  for $k \geq k_0$. Then, since $\alpha\ll u_k \ll \beta$ in $\Omega$, by \eqref{overlambda} we must have $u_k\in \partial \overline{\mathcal{S}}$ for $k\geq k_0$, which means that for each such $k$,
		\begin{align}\label{touchover}
			\max_{\overline{\Omega}}(\alpha-u_k)=0\mbox{ or }  \min_{\overline{\Omega}}(u_k-\beta)=0.
		\end{align}
		By \eqref{overlambda} and the compact inclusion $C^{0,\mathcal{D}ini}(\overline{\Omega})\subset \subset C(\Omega)$, we know that $u_k\rightarrow u$ in $\Omega$ for some $u\in C^1(\Omega)$, up to subsequences. Then, $u$ is a solution to ($P_{\overline{\lambda}}$) and  by taking the limit as $k\rightarrow+\infty$ in the corresponding  inequalities  for $u_k$, it follows that $\alpha \leq \beta$ in $\Omega$. Thus, $\alpha\ll u \ll \beta$ in $\Omega$, since $\alpha$ and $\beta$ are strict. Passing \eqref{touchover} to the limit, we obtain that $u(x)=\alpha(x)$ or $u(x)=\beta(x)$ for $x\in \overline{\Omega}$, which contradicts the definition of $\alpha\ll u\ll \beta$.
		Hence, for obtaining \eqref{existsvarepsilonover} it is just necessary to apply the homotopy invariance in $\lambda$ in the interval $[\overline{\lambda} ,\overline{\lambda}+\varepsilon]$. With \eqref{existsvarepsilonover} in hand, we repeat exactly the same argument used above to obtain the existence of a second solution $u_{\lambda,2}$ to problem \eqref{$P_lambda$} for all $\lambda\in [\overline{\lambda} ,\overline{\lambda}+\varepsilon]$, which contradicts the definition of $\overline{\lambda}$.
	\end{proof}

In order to prove Theorem \ref{teo5.3}, we start by constructing an auxiliary problem $(P_{\lambda,k})$, for which we can assume that there is no solution for large $k$. This is a typical but essential argument that allows us to find a second solution via degree theory, by homotopy invariance in $k$.
	Fix $\Lambda_2>0$ and recall that Theorem  \ref{lowerbound} gives us an a priori lower uniform bound $C_0$ such that
$u\geq -C_0,$ for every weak supersolution $u$ of the problem \eqref{$P_lambda$}, for all  $\lambda\in[0,\Lambda_2].$
	Consider, the problem
	\begin{align*}\label{P3}\tag{$P_{\lambda,{k}}$}
		\left\{
		\begin{array}{rll}
			-\divi(A(x)Du)=c_\lambda(x)u+(M(x)Du,Du)+h(x)+k\widetilde{c}(x) \mbox{ in } \Omega\\
			\qquad\qquad\qquad u= 0\qquad\qquad\qquad\qquad\qquad\qquad\qquad\qquad \mbox{ on } \partial\Omega
		\end{array}
		\right.
	\end{align*}
	for $k\geq 0$, $\lambda\in [0,\Lambda_2]$ and $\widetilde{c}$  defined as
	\begin{align}\label{5.5}
		\widetilde{c}(x):=\widetilde{c}_{\Lambda_2}(x)=h^-(x)+\Lambda_2C_0c^+(x)+\widetilde{M}c^-(x)+Bc^+(x)
	\end{align}
	with $B=\gamma_1/ \nu_1$, where $\gamma_1=\gamma_1^+>0$ is the first eigenvalue, with weight $c$, associated to the eigenfunction $\varphi_1\in W^{2,p}(\Omega)$, given by \eqref{eig1}.
	Note that every solution to \eqref{P3} is also a supersolution to \eqref{$P_lambda$} since $k\widetilde{c}(x)\geq 0$.
	Then, in virtue of (\ref{5.5}), for all $k \geq 1$, we have that
	\begin{align*}
		c_\lambda(x)u+h(x)+k\widetilde{c}(x)\geq -\Lambda_2C_0c^+(x)-\widetilde{M}c^-(x)-h^-(x)+\widetilde{c}(x)=Bc^+(x)\gneqq 0.
	\end{align*}
	
	We now derive some results about the solutions of problem \eqref{P3}.
	
	\begin{lemma}\label{c2} Under assumption \eqref{A}, assume that ($P_0$) has a solution $u_0\leq 0$ with ${c^+(x)u_0\lneqq 0}$. Then for each fixed $\Lambda_2>0$ and $\lambda\in [0,\Lambda_2]$, there exists $k\geq 0$ such that
		\begin{itemize}
			\item[(i)]  For all $k> 1$, the problem \eqref{P3} has no solutions;
			\item[(ii)]  For all $k\in(0,1) $,  \eqref{P3} has at least two solutions $u_{\lambda,1}\ll u_{\lambda,2}$;
			\item[(iii)]  For $k=1$, and $h\leq 0$ the problem \eqref{P3} has exactly one solution.
		\end{itemize}
		
	\end{lemma}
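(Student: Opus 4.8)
The three items describe a fold (turning-point) structure for the family \eqref{P3} in the parameter $k$, with the fold pinned at exactly $k=1$ by the calibrated choice of constants in \eqref{5.5}, and in particular by $B=\gamma_1/\nu_1$. My plan combines the a priori bounds of Section~\ref{aprioribound}, the method of sub- and supersolutions, and Leray--Schauder degree. The structural fact I would exploit throughout is the monotonicity in $k$: any solution of \eqref{P3} is a supersolution of the same problem for every smaller value of $k$ (the forcing $k\widetilde c$ only increases with $k$), while the strict subsolution $v_\lambda$ furnished by Lemma~\ref{existssub} lies below every supersolution of \eqref{P3} for all $k\ge 0$. Consequently, by sub/supersolutions, the set of $k$ for which \eqref{P3} is solvable is a downward-closed subinterval of $[0,\infty)$ that carries a minimal solution $u_{\lambda,1}$ whenever it is nonempty; items (i)--(iii) then amount to identifying this interval as $[0,1]$ and analysing its endpoint.

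For item (i) I would argue by contradiction. Suppose $u$ solves \eqref{P3} for some $k>1$. Since $u$ is then a supersolution of \eqref{$P_lambda$}, Theorem~\ref{lowerbound} gives $u\ge -C_0$, and the upper bound of Theorem~\ref{6.3}, applied to the equation with the $L^p$-forcing $h+k\widetilde c$, controls $u$ from above on the support of $c^-$; feeding these bounds into \eqref{5.5} yields the pointwise estimate $c_\lambda u+h+k\widetilde c\ge Bc^+ +(k-1)\widetilde c$, which is strictly positive on $\Omega_{c^+}$. I would then apply the exponential change $w_1=(e^{\nu_1 u}-1)/\nu_1$ of Lemma~\ref{exponentialchange} to remove the quadratic gradient term and test the resulting differential inequality against the first eigenfunction $\varphi_1$ of \eqref{eig1}. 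The calibration $B=\gamma_1/\nu_1$ is exactly what makes the linearized inequality incompatible with $\gamma_1$ being the first eigenvalue, yielding the contradiction, precisely as in \cite[Lemma~6.1]{ACJT}. Together with the monotonicity above, this confines the solvability interval to $[0,1]$.

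For item (ii), fix $k\in(0,1)$. Granting solvability at the upper endpoint (treated in item (iii)), downward-closedness provides a minimal solution $u_{\lambda,1}$ with $v_\lambda\ll u_{\lambda,1}$, whose fixed-point index is $+1$. To produce a second solution I would run a degree argument: choosing $R$ from the uniform a priori bounds of Theorems~\ref{6.3} and~\ref{lowerbound} (valid for $k$ in a compact set) and homotoping $k$ upward past $1$, item (i) guarantees that no solution meets $\partial B_R(0)$ along the homotopy, so $\deg(I-T_{\lambda,k},B_R(0))=0$; excision against the index-$1$ contribution of $u_{\lambda,1}$ forces a further solution $u_{\lambda,2}\neq u_{\lambda,1}$. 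I would then upgrade the ordering to $u_{\lambda,1}\ll u_{\lambda,2}$ exactly as in the proof of Theorem~\ref{teo5.2}: the midpoint $\beta=(u_{\lambda,1}+u_{\lambda,2})/2$ is a strict supersolution by strict convexity of $\xi\mapsto(M(x)\xi,\xi)$ combined with Theorem~\ref{SMP}.

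For item (iii) I would first obtain existence at $k=1$ as a limit: taking $k_n\uparrow 1$ along the branch, the uniform a priori bounds and the global $C^{1,\alpha}$ estimates give $\|u_{\lambda,1}(k_n)\|_{C^{1,\alpha}}\le C$, so a subsequence converges in $C^1$ to a solution of $(P_{\lambda,1})$, which also identifies the top of the solvability interval as $\bar k=1$. For uniqueness under $h\le 0$, I would assume two distinct solutions $u_1,u_2$ of $(P_{\lambda,1})$, form the strict supersolution $\beta=(u_1+u_2)/2$ as above, and then repeat the continuation-by-homotopy argument from the end of the proof of Theorem~\ref{teo5.2}: the strict pair $v_\lambda\ll\beta$ lets one continue the two-solution configuration to parameters $k\in[1,1+\varepsilon)$, contradicting item (i). The sign condition $h\le 0$ enters exactly here, since it makes the $k=1$ forcing $h+\widetilde c\gneqq 0$ and turns $k=1$ into the genuine fold. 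The main obstacle is precisely this endpoint analysis: showing that the two-solution branch persists for every $k<1$ and collapses at $k=1$ rests on the delicate calibration of the constants in \eqref{5.5} (so that the forcing crosses the eigenvalue threshold exactly at $k=1$) and on making the degree homotopy in $k$ rigorous, with uniform a priori bounds along the whole homotopy and strict sub/supersolution barriers preventing solutions from escaping the order interval, the same technical difficulties met at the end of the proof of Theorem~\ref{teo5.2}.
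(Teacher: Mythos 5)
Your overall architecture (downward-closed solvable interval in $k$ via sub/supersolutions, degree homotopy in $k$ past the nonexistence threshold for the second solution, limit along the branch plus a midpoint strict supersolution for the endpoint) matches the paper's. But there is a genuine gap at the base of the construction: you never establish that the solvable set contains any $k>0$. You derive the minimal solution for $k\in(0,1)$ by ``granting solvability at the upper endpoint (treated in item (iii))'', while your item (iii) obtains existence at $k=1$ as a limit $k_n\uparrow 1$ ``along the branch'' --- i.e.\ from item (ii). This is circular, and for $\lambda>0$ solvability of \eqref{P3} for small $k$ is not automatic (the hypothesis only gives a solution of $(P_0)$). The paper's Step~1 supplies exactly this anchor: it invokes the Anti-maximum Principle (Lemma~\ref{antimax}) with a carefully chosen $\lambda_0\in(\gamma_1,\gamma_1+\varepsilon_0)$ to produce a strictly negative solution $u\ll 0$ of an auxiliary linear problem, rescales it by $k/\lambda$, and passes through the logarithmic change of variables to obtain a supersolution $\beta_k\ll 0$ of \eqref{P3} for $k$ small; only then does the sub/supersolution and degree machinery start. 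Without this step your fold has nothing to fold.

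Your treatment of item (i) also deviates from the paper in a way that does not close. Pairing \eqref{P3} against $\varphi_1$ yields $(\gamma_1-\lambda)\int c^+u\varphi_1=\int\varphi_1(M(x)Du,Du)+\int(h+k\widetilde c)\varphi_1$, and for general $\lambda\in[0,\Lambda_2]$ with $\lambda\neq\gamma_1$ the left-hand side is an indefinite term whose control would require an upper a priori bound on $u$ that is uniform in $k$ --- but the bound of Theorem~\ref{6.3} applied to \eqref{P3} depends on $\|h+k\widetilde c\|_{L^p}$ and hence on $k$. The paper instead tests with $\varphi^2$, $\varphi\in C_0^\infty(\Omega)$, and uses $2(\varphi Du,D\varphi)\leq\mu_1|Du|^2\varphi^2+\mu_1^{-1}|D\varphi|^2$ together with $M\geq\mu_1 I_n$ to absorb the quadratic term, leaving a right-hand side that grows linearly in $k$ with positive coefficient $\int\widetilde c\,\varphi^2$; this gives nonexistence for $k$ large, and the threshold being exactly $1$ is then a normalization via $\sup\{k>0:\eqref{P3}\text{ solvable}\}$, not a consequence of the calibration $B=\gamma_1/\nu_1$. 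The actual role of $B$ is to make $c_\lambda u+h+k\widetilde c\geq Bc^+\gneqq 0$ for $k\geq 1$, so that the SMP forces solutions to be positive (which is what makes the scaling $\beta=k\widetilde k^{-1}\widetilde\beta$ a supersolution in the paper's Step~3). Your degree and endpoint arguments for (ii) and (iii) are consistent with what the paper does, but they all sit downstream of these two missing pieces.
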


	\begin{proof}
		We proceed in several steps.\medskip
		\newline
		{\bf Step 1:} {\it For $k>0$ small, \eqref{P3} admits a solution.}	Let $\lambda>\gamma_1$ and $\varepsilon_0>0$ be given by Lemma \ref{antimax} corresponding to $\overline{c}=c(x)$,
	 $\overline{d}=\nu_2h^-(x)$, $\overline{h}=\nu_2\widetilde{c}(x)+{k}^{-1}\nu_2h^+(x)$, and choose\\ $\displaystyle\lambda_0\in\big(\gamma_1,\min\big\{\gamma_1+\varepsilon_0,\gamma_1+{(\lambda-\gamma_1)}/{2} \big\}\big]$.
		Then, the following problem
		\begin{align*}
			-\divi(A(x)Du)+\nu_2h^-(x)u=c_{\lambda_0}u+\nu_2\widetilde{c}(x)+\frac{1}{k}\nu_2h^+(x)
		\end{align*}
		has a solution $u\ll 0$.
		Taking $\delta>0$ small enough we obtain
		\begin{align*}
			\lambda_0 s\geq (1+\lambda s)\ln(1+\lambda s) \quad 	\text{for \ all} \quad  s\in[-\delta,0].
		\end{align*}
		Defining $\widetilde{\beta}_k={k}{\lambda}^{-1}u$ for $k>0$ small enough, it follows that $\widetilde{\beta}_k\in[-\delta,0]$ and it satisfies
		\begin{align*}
			-\divi(A(x)D\widetilde{\beta}_k)&=c_{\lambda_0}\widetilde{\beta}_k+\nu_2\frac{k}{\lambda}\widetilde{c}(x)-\nu_2\frac{k}{\lambda}h^-(x)u+\frac{1}{\lambda}\nu_2h^+(x),\mbox{ and hence}\\
			-\divi(A(x)D\widetilde{\beta}_k)&+\nu_2h^-(x)\widetilde{\beta}_k=c_{\lambda_0}(x)\widetilde{\beta}_k+\nu_2\frac{k}{\lambda}\widetilde{c}(x)+\frac{1}{\lambda}\nu_2h^+(x).
		\end{align*}
		Thus, for $\beta_k$ being defined by $\beta_k={\nu_2}^{-1}\ln(1+\lambda \widetilde{\beta}_k)$, we have
		\begin{align*}
			-\divi(A(x)D\beta_k)&=-\frac{\lambda}{\nu_2}\frac{\divi(A(x)D\widetilde{\beta}_k)}{(1+\lambda\widetilde{\beta}_k)}-\frac{\lambda}{\nu_2}\left(A(x)D\widetilde{\beta}_k,D\left[{(1+\lambda\widetilde{\beta}_k)^{-1}}\right]\right)\\
			&\gneqq c_\lambda(x)\beta_k+\frac{k\widetilde{c}(x)+h^+(x)-\lambda h^-(x)\widetilde{\beta}_k}{1+\lambda \widetilde{\beta}_k}+\frac{\lambda^2}{\nu_2(1+\lambda\widetilde{\beta}_k)^2}(A(x)D\widetilde{\beta}_k,D\widetilde{\beta}_k)\\
			&\geq c_\lambda(x)\beta_k+k\widetilde{c}(x)+h^+(x)-h^-(x)+\nu_2\vartheta\frac{|D\widetilde{\beta}_k|^2}{(1+\nu_2\widetilde{\beta})^2}\\
			&\geq c_\lambda(x)\beta_k+k\widetilde{c}(x)+h(x)+(M(x)D\beta_k,D\beta_k) .
		\end{align*}
		Therefore, we conclude that
		\begin{eqnarray*}
			\left\{
			\begin{array}{rll}
				-\divi(A(x)D\beta_k)&\geq c_\lambda(x)\beta_k+k\widetilde{c}(x)+h(x)+(M(x)D\beta_k,D\beta_k) &\mbox{ in } \Omega\\
				\beta_k&=0 &\mbox{ on } \partial\Omega
			\end{array}
			\right.
		\end{eqnarray*}
		has a supersolution $\beta_k$ with $\beta_k\ll 0$ and that \eqref{P3} has at least one solution, by following the proof of Theorem \ref{teo5.2}.\medskip
		\newline
		{\bf Step 2:} {\it For $k>1$ the problem \eqref{P3} has no solution.}	First we observe that every solution to \eqref{P3} for $\lambda\in [0,\Lambda_2]$ is positive in $\Omega$.
		In fact, we observe that
		\begin{eqnarray*}
			\left\{
			\begin{array}{rll}
				-\divi(A(x)Du)&\geq \left(M(x)Du,Du\right)+Bc^+(x) 
				\geq 0&\mbox{ in } \Omega\\
				u&=0 &\mbox{ on } \partial\Omega
			\end{array}
			\right.
		\end{eqnarray*}
		and, in view of the SMP, it implies that $u>0$ in $\Omega$.
		In order to obtain a contradiction, assume that $u$ is a solution to \eqref{P3} in $\Omega$.
			Let $\varphi\in C_0^\infty(\Omega)$ such that $\varphi^2\gg 0$.
			Using $\varphi^2$ as a test function, by Theorem \ref{lowerbound} we obtain
			\begin{align*}
				\int\frac{1}{\mu_1}|D\varphi|^2&\geq 2\int(\varphi Du,D \varphi)-\mu_1\int|Du|^2\varphi^2\geq 2\int(\varphi Du,D \varphi)-\int(M(x)Du,\varphi^2Du)\\
				&\geq -\Lambda_2C_0\int c^+(x)\varphi^2-M\int c^-(x)\varphi^2-\int h^-(x)\varphi^2+\int k \tilde{c}(x)\varphi^2,
			\end{align*}
			which is a contradiction for $k > 1$ large enough.\medskip
			\newline
			{\bf Step 3:} {\it For $k=1$ \eqref{P3} has a unique solution and for $k\in(0,1)$, problem \eqref{P3} has a strict supersolution.}	By Step 1 and 2 we have
			$
				1=\sup\{k>0; \eqref{P3}\mbox{ has at least one solution}\}.
			$
			Let $k\in(0,1)$ and $\widetilde{k}\in(k,1)$ be such that ($P_{\lambda,\widetilde{k}}$) has a solution $\widetilde{\beta}$.
			Then, $\beta={k}{\widetilde{k}}^{-1}\widetilde{\beta}$ is a supersolution to \eqref{P3}. 
			Now, as in (iii) of the proof of Theorem \ref{teo5.2} we can prove that $\beta$ is a strict supersolution to \eqref{P3} and also derive the existence of the second solution $u_{\lambda,2}$ with $u_{\lambda,1}\ll u_{\lambda,2}$.
		\end{proof}
	
		\begin{lemma}\label{u_2>0}
			Under assumption \eqref{A}, assume that ($P_0$) has a solution $u_0\leq 0$ with $c^+(x)u\lneqq 0$. Then, for all $\lambda\geq 0$, problem \eqref{$P_lambda$} has at most one solution $u\leq 0$.
		\end{lemma}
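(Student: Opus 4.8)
The plan is to show that any two nonpositive solutions of \eqref{$P_lambda$} must coincide, by reducing the noncoercive problem to a coercive comparison. Let $u_1,u_2\le 0$ be two solutions of \eqref{$P_lambda$} for a fixed $\lambda\ge 0$; since the roles of $u_1$ and $u_2$ are symmetric, it suffices to prove $u_1\le u_2$ and then exchange them. The obstruction to a direct application of a comparison principle is that $c_\lambda=\lambda c^+-c^-$ has no global sign once $\lambda>0$. The first key observation is that on nonpositive functions this obstruction disappears in the favorable direction: since $\lambda c^+\ge 0$ and $u_i\le 0$, each solution satisfies $-\divi(A(x)Du_i)= -c^-(x)u_i+(M(x)Du_i,Du_i)+\big(h+\lambda c^+ u_i\big)$, so that $u_i$ solves a problem of \emph{coercive type}, with nonpositive zero-order coefficient $-c^-$ and right-hand side $h+\lambda c^+u_i\in L^p(\Omega)$.

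Second, I would remove the quadratic gradient term exactly as in the derivation of the a priori bounds. Applying the exponential change of variables of Lemma \ref{exponentialchange}, and using the sandwich $\nu_1(A(x)\xi,\xi)\le (M(x)\xi,\xi)\le \nu_2(A(x)\xi,\xi)$ that follows from \eqref{5.2} and the ellipticity of $A$, each $u_i$ is transformed into a sub- or supersolution (according to the exponent $\nu_1$ or $\nu_2$) of a semilinear problem in which the gradient nonlinearity has been replaced by the zero-order term $\tfrac{c_\lambda}{\nu}(1+\nu v)\ln(1+\nu v)$, compare \eqref{w_i}. On the range $v\le 0$ this transformed nonlinearity inherits the same favorable sign as above, so the transformed inequalities are amenable to the Comparison Principle \cite[Lemma 2.11]{fio} already used in Lemma \ref{lbound}, upgraded to strict ordering by the Strong Maximum Principle (Theorem \ref{SMP}).

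With these two reductions in hand, I would run the comparison in both directions to force $u_1=u_2$. Concretely, I would argue by contradiction: if $u_1\neq u_2$, the midpoint $\beta:=\tfrac12(u_1+u_2)\le 0$ is, by the strict convexity of $\xi\mapsto (M(x)\xi,\xi)$ guaranteed by \eqref{5.2} (exactly the computation used in the proof of Theorem \ref{teo5.2}(iii)), a \emph{strict} supersolution of \eqref{$P_lambda$} which is not a solution. Combined with the coercive-type comparison from the previous step, this strict inequality is incompatible with $\beta$ lying between two genuine solutions, yielding the desired contradiction; the hypothesis $c^+(x)u_0\lneqq 0$ enters here to pin down the ordering of the nonpositive solutions relative to $u_0$.

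I expect the main obstacle to be precisely the reconciliation alluded to above: the term $\lambda c^+u$ depends on $u$ in the \emph{monotone} (hence, for comparison purposes, ``wrong'') direction, so a naive application of the Comparison Principle is circular. The substance of the proof is therefore to combine the sign restriction $u\le 0$ with the strict convexity of the gradient nonlinearity, so that the midpoint argument produces a strict supersolution and breaks the circularity; a secondary technical point is that the sub- and supersolution transforms in Lemma \ref{exponentialchange} use the two different exponents $\nu_1\neq\nu_2$, which must be handled consistently when transferring the ordering back to $u_1$ and $u_2$.
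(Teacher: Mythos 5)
Your reduction steps are sound as far as they go, but the final step --- where uniqueness actually has to be extracted --- contains a genuine gap. You propose that if $u_1\neq u_2$ are two nonpositive solutions, then the midpoint $\beta=\tfrac12(u_1+u_2)$ is a strict supersolution and that this is ``incompatible with $\beta$ lying between two genuine solutions.'' It is not: a strict supersolution $\beta$ only forces $u\ll\beta$ for every solution $u$ with $u\le\beta$, and the configuration $u_1\ll\beta\lneqq u_2$ is perfectly consistent. Indeed the paper uses exactly this midpoint construction in the proof of Theorem~\ref{teo5.2}(iii) to \emph{separate} two coexisting solutions (obtaining $u_{\lambda,1}\ll u_{\lambda,2}$), i.e.\ precisely in a situation where uniqueness fails; so the midpoint argument cannot by itself rule out a second solution. (It does yield a contradiction in the paper only at the threshold value $\overline\lambda$, and there only after an additional degree-theoretic continuation in $\lambda$ that is not available for an arbitrary $\lambda\ge0$.) You correctly identify the circularity caused by the term $\lambda c^+u$, but the midpoint convexity does not break it.

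What the paper does instead is a two-stage argument. First (Steps 1--2 of Lemma~\ref{u_2>0}) it shows that every nonpositive subsolution satisfies $u\ll u_0\le 0$, and then, if $u_1,u_2\le0$ are two solutions which are not ordered, it invokes \cite[Theorem 2.1]{CJ} with the supersolution $u_0$ to produce a third solution $u_3$ with $\max\{u_1,u_2\}\le u_3\le u_0$; hence one may assume $u_1\ll u_2\ll0$ from the start. Second, the convexity of $\xi\mapsto(M(x)\xi,\xi)$ is applied not to the midpoint but to an extremal convex combination: since $|u_2|\gg0$, the quantity $\tilde\varepsilon:=\min\{\varepsilon>0:\ u_2-u_1\le\varepsilon|u_2|\}$ is well defined, and writing $u_2=\tfrac{\tilde\varepsilon}{1+\tilde\varepsilon}w_{\tilde\varepsilon}+\tfrac{1}{1+\tilde\varepsilon}u_1$ with $w_{\tilde\varepsilon}:=\tfrac{(1+\tilde\varepsilon)u_2-u_1}{\tilde\varepsilon}\le0$, convexity shows that $w_{\tilde\varepsilon}$ is a nonpositive subsolution of $(P_\lambda)$; the Comparison Principle then gives $w_{\tilde\varepsilon}\lneqq u_2$, contradicting the minimality of $\tilde\varepsilon$. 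Note that the sign condition $u_2\ll0$ is what makes the set of admissible $\varepsilon$ nonempty, and the prior ordering $u_1\ll u_2$ is what makes the construction meaningful --- both ingredients are absent from your sketch. The exponential change of variables you invoke is not needed here; the gradient term is handled directly by convexity.
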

		\begin{proof}
			The proof is divided in several steps.\medskip
			\newline
			{\bf Step 1:} {\it If $u$ is a subsolution to \eqref{$P_lambda$} with $u\leq 0$, then $u\ll 0$.} 	In fact, $u$ is a subsolution to ($P_0$) and by the Comparison Principle,  we have $u\leq u_0$. In addition, for $w=u_0-u$ we have
			\begin{align*}
				-\divi(A(x)Dw) &\geq -c^-(x)u_0+(M(x)Du_0,Du_0)-c_\lambda(x)u-(M(x)Du,Du)  \\
				&=(M(x)Du+Dw,Dw)-c^-(x)w-\lambda c^+(x)u,
			\end{align*}
			and hence, we get
			\begin{align*}
				\left\{
				\begin{array}{rll}
					-\divi(A(x)Dw)-(M(x)Du+Dw,Dw)-c^-(x)w 
					&\gneqq 0 &\mbox{ in } \Omega\\
					w&= 0 &\mbox{ on } \partial\Omega.
				\end{array}
				\right.
			\end{align*}
			This implies that $w \gg 0$ i.e. $u\ll u_0\leq 0$.\medskip
			\newline
			{\bf Step 2:} {\it If we have two solutions $u_1, u_2\leq 0$ to \eqref{$P_lambda$}, then such solutions are ordered as  $\tilde{u}_1\lneqq \tilde{u}_2\leq u_0$.} 	By Step 1, we have $u_1, u_2 \ll u_0$ .
			In case $u_1$ and $u_2$ are not ordered, as $u_0$ is a supersolution to \eqref{$P_lambda$}, applying
		 \cite[Theorem 2.1]{CJ} there exists a solution $u_3$ of (\ref{$P_lambda$}) with $\max\{u_1,u_2\}\leq u_3\leq u_0$. This proves Step 2 by choosing $\tilde{u}_1= u_1$ and $\tilde{u}_2 = u_3$.\medskip
			\newline
			{\bf Step 3:} {\it There exists at most one non-positive solution to ($P_{\lambda}$).}	Let us assume by contradiction that we have two ordered non-positive solutions, we can suppose $u_1\ll u_2\ll 0$. As $|u_2|\gg 0 $ the set $\{\varepsilon>0 , u_2-u_1 \leq \varepsilon|u_2|\}$ is nonempty.
			Defining
			\begin{align*}
				\tilde{\varepsilon}:=\min \{\varepsilon>0, u_2-u_1 \leq \varepsilon
				|u_2|\} \quad \text{		and \ setting} 
			\quad
				w_{\tilde{\varepsilon}}:=\frac{(1+\tilde{\varepsilon})u_2-u_1}{\tilde{\varepsilon}},
			\end{align*}
			we can use de convexity of the function $\varphi(\xi):=(M(x)\xi,\xi)$ for each $\xi\in \mathbb{R}^n$ and write $u_2={\tilde{\varepsilon}}{(1+\tilde{\varepsilon})^{-1}}w_{\tilde{\varepsilon}}+{(1+\tilde{\varepsilon})^{-1}}u_1,$ then we obtain
			\begin{align*}
					(M(x)Du_2,Du_2)&=\varphi\left(\frac{\tilde{\varepsilon}}{1+\tilde{\varepsilon}}Dw_{\tilde{\varepsilon}}+\frac{1}{1+\tilde{\varepsilon}}Du_1\right)\leq \frac{\tilde{\varepsilon}}{1+\tilde{\varepsilon}}\varphi(Dw_{\tilde{\varepsilon}})+\frac{1}{1+\tilde{\varepsilon}}\varphi\left(Du_1\right)\\
				&=\frac{1}{1+\tilde{\varepsilon}}\big[\tilde{\varepsilon}(M(x)Dw_{\tilde{\varepsilon}},Dw_{\tilde{\varepsilon}})+(M(x)Du_1,Du_1)\big], \quad \mbox{and hence}\\
				\frac{1+\tilde{\varepsilon}}{\tilde{\varepsilon}}&(M(x)Du_2,Du_2)\leq (M(x)Dw_{\tilde{\varepsilon}},Dw_{\tilde{\varepsilon}})+\frac{1}{\tilde{\varepsilon}}(M(x)Du_1,Du_1).
			\end{align*}
			\begin{align*}
		\mbox{	Thus, it yields \ }	\qquad	-\divi(A(x)Dw_{\tilde{\varepsilon}})
				&\leq \frac{1+\tilde{\varepsilon}}{\tilde{\varepsilon}}\big(c_\lambda(x)u_2+(M(x)Du_2,Du_2)+h(x)\big)\\
				&\quad-\frac{1}{\tilde{\varepsilon}}\big(c_\lambda(x)u_1+(M(x)Du_1,Du_1)+h(x)\big)\\
				&\leq c_{\lambda}(x)
				w_{\tilde{\varepsilon}}+(M(x)Dw_{\tilde{\varepsilon}},Dw_{\tilde{\varepsilon}})+h(x). \qquad \qquad \qquad \quad \qquad
			\end{align*}
			Applying again the Comparison Principle, we get $w_{\tilde{\varepsilon}}\lneqq u_2\leq 0$, which is a contradiction due to the definition of $\tilde{\varepsilon}$.
		\end{proof}
	
	Finally, we have all the necessary tools to prove Theorem \ref{teo5.3}.
		\begin{proof}[Proof of Theorem \ref{teo5.3}]
			We treat separately the cases $\lambda\leq 0$ and $\lambda>0$.\medskip
			\newline{\bf Case {(i)}:} $\lambda\leq 0$.	As in the proof of Theorem \ref{teo5.2} we can apply \cite[Theorem 1.2]{MR4030257}. Moreover, observe that $u_0$ is a subsolution to (\ref{$P_lambda$}). Hence we conclude that $u_\lambda\geq u_0$ applying the Comparison Principle. By  \cite[Proposition 4.1]{ACJT} the problem  \eqref{$P_lambda$} has at most one solution and by Lemma \ref{lbound}  the function $v=u_0+\|u_0\|_{\infty}$ is a supersolution to \eqref{$P_lambda$} when $\lambda<0$. Then, the Comparison Principle implies that $u_0+\|u_0\|_{\infty}\geq u_\lambda$ .\medskip
			\newline{\bf Case {(ii)}:} $\lambda>0$. With the aim of showing the existence of a continuum of solutions to problem (\ref{$P_lambda$}), for $\lambda\geq 0$ we introduce the auxiliary problem
			\begin{equation}\tag{$\underline{P_\lambda}$}\label{p4}
			\begin{cases}
	-\divi(A(x)Du)+u&=[c_\lambda(x)+1][u_0-(u-u_0)^-]+(M(x)Du,Du)+h(x) \quad \mbox{ in } \Omega \\
\qquad \qquad \qquad \qquad  u &= 0 \quad \mbox{ on } \partial \Omega.
\end{cases}		
			\end{equation}
Observe that $u$ is a solution to \eqref{p4} if and only if it is a fixed point of the operator $\widehat{T}_\lambda$ defined by $\widehat{T}_\lambda:C^1(\overline{\Omega})\rightarrow C^1(\overline{\Omega}):v\rightarrow u$, where $u$ is the solution to
			\begin{align*}
				-\divi(A(x)Du)+u-\big(M(x)Du,Du\big)=[c_\lambda(x)+1][u_0-(v-u_0)^-]+h(x).
			\end{align*}
			Applying the same argument to $\overline{T}_\lambda$ as the one used in the proof of Theorem \ref{teo5.2}, we see that $\widehat{T}_\lambda$ is completely continuous,
			and we split the rest of the proof into three steps.
		\medskip	\newline
			{\bf Step 1:} {\it If $u$ is a solution to (\ref{p4}) then $u\leq  u_0$ and it is a solution to (\ref{$P_lambda$}).} 
			Observe that $u_0-u-(u-u_0)^-\leq 0$. Moreover, we also have ${\lambda c^+(x)[u_0-u-(u-u_0)^-]\leq \lambda c^+(x)u_0\leq  0.}$
			Hence, we deduce that a solution $u$ of (\ref{p4}) is a subsolution to
			\begin{align}\label{5.4}
				-\divi(A(x)Du)&=-c^-(x)[u_0-(u-u_0)^-]+(M(x)Du,Du)+h(x).
			\end{align}
			Since $u_0$ is a solution to (\ref{$P_lambda$}), it implies that $u_0$ solves (\ref{5.4}). Then, applying again the Comparison Principle we get $u\leq u_0$.
		\medskip\newline
	{\bf Step 2:} {\it$u_0$ is the unique solution to ($\underline{{P}_0}$) as well as to the problem ($P_0$) and $i(I-\widehat{T}_0,u_0)=1$.}
			For $\lambda=0$, if  $u$ is a solution to (\ref{5.4}), then by Step 1, $u\leq u_0$ and $u$ solves (\ref{$P_lambda$}). From (i) we conclude that $u=u_0$.
			In order to prove that $i(I-\widehat{T}_0,u_0)=1$, we consider the operator  
			$	\widehat S_t:C^1(\overline{\Omega})\rightarrow C^1(\overline{\Omega})$ given by $ \widehat S_t(v):=t\widehat{T}_0v=u
		$,
			where $u$ is the solution to
			\begin{align*}
				-\divi(A(x)Du)+u&=(M(x)Du,Du)+th(x) +t[-c^-(x)+1][u_0-(v-u_0)^--(v-u_0+1)^-].
			\end{align*}
	By the complete continuity of $\widehat{T}$ and also by the fact that every solution $u$ to \eqref{P2} is $C^{\alpha}$ up to the boundary, there exists $R>0$ such that for all $t \in [0,1]$ and all $v \in C^1(\overline{\Omega})$, it follows that
			$
				\|S_tv\|_{C^\alpha}<R.
			$
			Then, $I-S_t$ does not vanish on $\partial B_R(0)$ and
			\begin{align*}
				\deg(I-\widehat{T}_0,B_R(0))&= \deg(I-S_1,B_R(0))=\deg(I-S_0,B_R(0))=\deg(I,B_R(0))=1.
			\end{align*}
			Therefore, $\widehat{T}_0$ has only a fixed point $u_0$, which is a solution to $(\underline{{P}_0})$. Therefore, arguing as in Step 2 of Theorem \ref{teo5.2} we conclude this step.
	\medskip
		\newline
			{\bf Step 3:} {\it Existence and behavior of the continuum.} It follows the same lines as  Step 3 of Theorem \ref{teo5.2}.
				
			For the multiplicity results in (iii), we observe that	by Step 1, we get the existence of a first solution $u_{\lambda,1}\leq u_0$. To prove that $u_0$ is a strict supersolution to \eqref{$P_lambda$}, we argue as in  the proof of Theorem \ref{teo5.2}, and by Lemma \ref{existssub} \eqref{$P_lambda$} has a strict subsolution $\alpha$ with $\alpha\leq u_0$.
			Then, by Theorem \ref{existence}, there exists $R>0$ such that $u_{\lambda,1}\in \mathcal{S}$, where \[\mathcal{S}=\{u\in C_0^1(\overline{\Omega});\alpha\ll u\ll u_0 \mbox{ in }\Omega, \|u\|_{C^1_0}<R\}.\]
			Now, fixing  $\lambda>0$ and setting $\Lambda_2=2\lambda$, we replace $h$ by $h +k\widetilde{c} $ in the problem \eqref{P3}, and then Theorem \ref{6.3} gives us an $L^\infty$ a priori bound for solutions to \eqref{P3} for every $k\in [0,1]$. This provides, by the  $C^{1,\mathcal{D}ini}$ global estimates, an a priori bound for solutions in $C^1_0(\overline{\Omega})$, i.e. $\|u\|_{C^1_0(\overline{\Omega})}<R_0$ for every  solution $u$ to \eqref{P3}, for all $k\in [0,1]$, where $R_0>R$ also depends on $\lambda$.
			Hence, by the homotopy invariance of the degree, and the fact that, for $k > 1$, \eqref{P3} has no solution we have
			\begin{eqnarray*}
				\deg(I-\widehat{T}_\lambda,B_{R_0}(0))=\deg(I-\widehat{T}_{\lambda,0},B_{R_0}(0))=\deg(I-\widehat{T}_{\lambda,k},B_{R_0}(0))=0,
			\end{eqnarray*}
			where $\widehat{T}_{\lambda,k}$ is the operator $\widehat{T}_\lambda$ in which we replace $h(x)$ by $h(x)+k\widetilde{c}$, note that $\widehat{T}_{\lambda,k}$ is clearly still completely continuous. But then, by the excision property of the degree,
			\begin{eqnarray*}
				\deg(I-\widehat{T}_\lambda,B_{R_0}\setminus \mathcal{S}(0))=\deg(I-\widehat{T}_\lambda,B_{R_0}(0))-\deg(I-\widehat{T}_\lambda,S(0))=-1
			\end{eqnarray*}
			and the existence of a second solution $u_{\lambda,2}\in B_{R_0}\setminus \mathcal{S}$ is derived.
			By Lemma \ref{u_2>0} we have $ u_{\lambda,2}>0$.
			
		 For finishing, we claim that for fixed $\lambda_1< \lambda_2$ we have $u_{\lambda_2,1}\ll u_{\lambda_1,1}$.
			In fact, note that \[c_{\lambda_1}(x)u_{\lambda_1,1}=\lambda_1 c^+(x)u_{\lambda_1,1}-c^-(x)u_{\lambda_1,1}\gneqq \lambda_2 c^+(x)u_{\lambda_1,1}-c^-(x)u_{\lambda_1,1}=c_{\lambda_2}(x)u_{
					\lambda_1,1},\]
				since $u_{\lambda_1,1}<0$. Then, $u_{\lambda_1,1}$ is a strict supersolution to $(P_{\lambda_2})$, which is not a solution and, in particular, $u_{\lambda_1,1}\ne u_{\lambda_2,1}$.
				As in the proof of  \cite[Claim 6.16]{multiplicidade}, we observe that $u_{\lambda_2,1}$ is the minimal solution to ($P_{\lambda_2}$).
				In fact, recall that $\xi=\xi_{\lambda_2}$, given by Lemma \ref{existssub}, is such that $\xi\leq u$ for every strict supersolution to ($P_{\lambda_2}$) and in particular $\xi\leq u_{\lambda_1,1}$.
				Remember also that  $u_{\lambda_2,1}$ is
				the minimal strict solution such that $u_{\lambda_2,1}\geq \xi$ in $\Omega$. Now, if there was a $x_0\in \Omega$ such
				that $u_{\lambda_2,1}(x_0)>u_{\lambda_1,1}(x_0)$, by defining $\eta:=\min \{u_{\lambda_1,1},u_{\lambda_2,1}\}$, as the minimum of the strict supersolutions to ($P_{\lambda_2}$) not less than $\xi$, we have $\xi\leq\eta$ in $\Omega$. Thus, applying again Theorem \ref{existence} we get 
				a solution $u$ of ($P_{\lambda_2}$) such that $\xi\leq u \leq\eta\lneqq u_{\lambda_2,1}$  in $\Omega$, which contradicts the minimality of $u_{\lambda_2,1}$. Ending the
			proof of Theorem \ref{teo5.3}.
		\end{proof}
		
		In what follows, we prove Theorem \ref{solucoesnegativas}, considering the alternative situation when there exists a supersolution to (\ref{$P_lambda$}) for some $\lambda_0 > 0$.
		
		\begin{proof}[Proof of Theorem \ref{solucoesnegativas}]
	For proving (i), we first observe that if \eqref{$P_lambda$} has a supersolution $\beta_\lambda\leq 0$, then $\beta_\lambda$ satisfies also $c^+(x)\beta_\lambda\lneqq 0$, otherwise, it is also a supersolution to ($P_0$),
		which contradicts the assumption (A). 
		Let us define
		\begin{equation*}
			\underline{\lambda}=\inf \{\lambda\geq 0; (\mbox{\ref{$P_lambda$}}) \mbox{ has a supersolution } \beta_\lambda\leq 0 \mbox{ with } c^+(x)\beta_\lambda\lneqq 0  \}.
		\end{equation*}
		Given $\lambda> \underline{\lambda}$, by the definition of $\underline{\lambda}$ there exists $\widetilde{\lambda}\in [\underline{\lambda},\lambda)$, such that ($P_{\widetilde{\lambda}}$) has a supersolution $\beta_{\widetilde{\lambda}}\leq 0$ with $c^+(x)\beta_{\widetilde{\lambda}}\lneqq 0$.
		Note that
		$$c_{\widetilde{\lambda}}(x)\beta_{\widetilde{\lambda}}=\widetilde{\lambda}c^+(x)\beta_{\widetilde{\lambda}}-c^-(x)\beta_{\widetilde{\lambda}}\gneqq \lambda c^+(x)\beta_{\widetilde{\lambda}}-c^-(x)\beta_{\widetilde{\lambda}}=c_{\lambda}(x)\beta_{\widetilde{\lambda}}.$$
		Then, $\beta_{\widetilde{\lambda}}$ is a supersolution to \eqref{$P_lambda$}, which is not a solution and hence, as in the proof of Theorem \ref{teo5.3}(iii), it is a strict supersolution to (\ref{$P_lambda$}).
		By Lemma \ref{existssub}, \eqref{$P_lambda$} has a strict subsolution $\alpha_\lambda\leq \beta_{\widetilde{\lambda}}$ and $\alpha_\lambda\leq u$ for all solutions $u$ to \eqref{$P_lambda$}. As in Step 2 of the proof of Theorem \ref{teo5.3}, there exists $R>0$ such that $\deg(I-\widehat{T}_\lambda,S)=1$ with
		\begin{align*}
			S=\{u\in C^1_0(\overline{\Omega})\mbox{, } \alpha\ll u\ll \beta_{\widetilde{\lambda}}\mbox{, } \|u\|_{C^1}\leq R\},
		\end{align*}
		and hence the existence of the first solution $u_{\lambda,1}\ll 0$ is derived.
		To obtain a second solution $u_{\lambda,2}$ satisfying $u_{\lambda,1}\ll u_{\lambda,2}$ and $u_{\lambda,2}>\beta_{\widetilde{\lambda}}$ we repeat the argument in the proof of the Theorem \ref{teo5.3}(iii).
		By Lemma \ref{u_2>0}  we have $u_{\lambda,2}>u_{\overline{\lambda}}$.
		Finally, arguing as the ending of the proof of Theorem \ref{teo5.3}, we prove that if $\lambda_1 < \lambda_2$ we have $u_{\lambda_1,1}\gg u_{\lambda_2,1}$.
		
		For proving that ($P_{\underline{\lambda}}$) has a non-positive solution, let $\{\lambda_n\}\subset (\underline{\lambda}, \infty)$
		be a decreasing sequence such that $\lambda_n\rightarrow \underline{\lambda}$. By the regularity result proved in  \cite[Lemma 2.1]{ACJTuni}, we know that there exists a sequence of corresponding solutions $\{u_n \}\subset H^1(\Omega)\cap W^{1,n}_{loc}(\Omega)\cap C^1(\overline{\Omega})$  with $u_n\leq u_{n+1}\leq 0$.
		As $\{u_n\}$ is increasing and bounded above, by Theorem \ref{6.3}, there exists $M>0$ such that  $\|u_n\|_{L^\infty}<M$ for all $n\in \mathbb{N}$, and hence by the $C^{1,\mathcal{D}ini}$ global estimates, we get $\|u_n\|_{C^{1,\mathcal{D}ini}}\leq C$.
		Then, up to a subsequence, $u_n\rightarrow u$ in $C^{1}_0(\Omega)$, which allows us to conclude that $u$ is a solution to  ($P_{\underline{\lambda}}$) with $u\leq 0$.
		
		Now we prove the uniqueness of the non-positive solution to  ($P_{\underline{\lambda}}$).
		Let us assume by contradiction that we have two distinct solutions, $u_1$ and $u_2$ of  ($P_{\underline{\lambda}}$), then as in the Step 3 of the proof of Theorem \ref{teo5.3}, we prove that $\beta=(u_1+u_2)/2$ is a strict supersolution to  ($P_{\underline{\lambda}}$). Let us consider the strict subsolution $\alpha\ll \beta$ of  ($P_{\underline{\lambda}}$) given by Lemma \ref{existssub}, and define the set
	$
			\overline{\mathcal{S}}=\{u\in C_0^1(\overline{\Omega})\mbox{; } \alpha\ll u\ll \beta,\|u\|_{C^1_0}<R\}
	$
		for some $R>C>0$. Again, by the $C^{1,\mathcal{D}ini}$ estimates, we have that
		\begin{align}\label{underlambda}
			\|u\|_{C^{1,\mathcal{D}ini}}\leq C \mbox{ for all } u \mbox{ solution  of } (P_\lambda)\mbox{, } \lambda\in[\underline{\lambda}-1, \underline{\lambda}]
		\end{align}		
		such that $deg(I-\widehat{T}_{\underline{\lambda}},\mathcal{\overline{\mathcal{S}}})=1$. Now we prove the existence of $\varepsilon>0$ such that
		\begin{align}\label{existsvarepsilon}
			deg(I-\widehat{T}_\lambda,\overline{\lambda})=1\mbox{, for all } \lambda\in[\underline{\lambda} -\varepsilon,\underline{\lambda}].
		\end{align}
	
		We argue as the end of the proof of Theorem \ref{teo5.2}, in order to verify that there exists some $\varepsilon\in(0,1)$ such that there is no fixed points of $T_\lambda$ on the boundary of $\overline{\mathcal{S}}$ for all $\lambda$ in the preceding interval. 
		Hence for obtaining \eqref{existsvarepsilon} it is sufficient to apply the homotopy invariance for $\lambda \in [\underline{\lambda}-\varepsilon,\underline{\lambda}]$. Next, with \eqref{existsvarepsilon} in hand, we repeat exactly the same argument done above to obtain the existence of a second solution $u_{\lambda,2}$ to \eqref{$P_lambda$}, for all $\lambda\in [\underline{\lambda}-\varepsilon,\underline{\lambda}]$. But this, contradicts the definition of $\underline{\lambda}$ completing the proof of (ii).
		 By the definition of $\underline{\lambda}$ and since $\beta$ is a strict supersolution to \eqref{$P_lambda$} we infer that (iii) holds.
	
	Finally, in order to describe the behavior of the solutions for $\lambda\rightarrow 0^-$, observe that in Lemma \ref{lbound} we have proved that  $\|u_\lambda\|_\infty< 2\|u_{\widehat{\lambda}}\|_\infty$ for all $\lambda\leq \widehat{\lambda}<0$.	In particular, if $C_0:= \displaystyle\limsup_{\lambda\rightarrow  0^-}\|u_\lambda\|_\infty<\infty$, there exists a sequence $\widehat{\lambda}_n\rightarrow 0^-$ such that ${C_0=\displaystyle\limsup_{n \rightarrow \infty} \|u_{\widehat{\lambda}_n}\|_\infty <\infty}$. Then, for every sequence $\lambda_n\rightarrow 0^-$ we deduce by the above inequality that $\displaystyle\limsup_{n \rightarrow \infty} \|u_{\lambda_n}\|_{\infty}\leq2C_0<\infty$. Therefore, we have either $\displaystyle\lim_{\lambda\rightarrow 0^-} \|u_\lambda\|_\infty= \infty$ or $\displaystyle\limsup_{\lambda\rightarrow 0^-} \|u_\lambda\|_\infty< \infty$.
		By assumption, we know that ($P_0$) does not have a solution $u_0\leq 0$, then the first case holds, finishing the proof.
		\end{proof}
		
		\begin{proof}[Proof of Corollary \ref{coro}]
		First observe that ($P_{\gamma_1,k}$) has no solution.
		In fact, if $u$ is a solution to \eqref{P3}, using $\varphi_1>0$, the first eigenfunction of \eqref{eig1}, as a test function in \eqref{P3}, we have
		\begin{align*}
			\int c_{\gamma_1}(x)u\varphi_1 =\int A(x)DuD \varphi_1=\int c_\lambda(x)u&\varphi_1 +\int \varphi_1(M(x)Du,Du)+\int (h(x)+k\widetilde{c}(x))\varphi_1 \\
			\mbox{and} \quad
			(\gamma_1-\lambda)\int c^+(x)u\varphi_1 \leq -\int|h(x)|&\varphi_1<0, \quad \mbox{which is a contradiction for \ } \lambda=\gamma_1. \qquad \quad
		\end{align*}
		Hence, for all $\lambda>0$, problem \eqref{$P_lambda$} has no solution with $c^+(x)u \equiv 0$, otherwise $u$ would be a solution to \eqref{$P_lambda$} for every $\lambda\in \mathbb{R}$, which contradicts the nonexistence of a solution for $\lambda=\gamma_1$.
		By Step 3 of the proof of Lemma \ref{c2}, there exists $\widetilde{k}>0$ such that for all
		$k\in (0,\widetilde{k}]$ problem \eqref{P3} has a strict supersolution $\beta_0$ with $\beta_0\ll 0$. The existence of $\lambda_2>\gamma_1$ as in (iii) can then be deduced from Theorem \ref{solucoesnegativas}.
		By   \cite[Theorem 1.1]{ACJT}, decreasing $\widetilde{k}$ if necessary, we know that for all $k\in(0,\widetilde{k}]$ problem ($P_{0,k}$) has a solution $u_0\gg 0$. Therefore, the existence of $\lambda_1$ as in (i) can be deduced from Theorem \ref{teo5.2}.
		\end{proof}
		
		Before proving Theorem \ref{hequiv0}, we observe that special cases of Theorems \ref{teo5.2} and \ref{teo5.3} are given when $h(x) \gneqq 0$ and $h(x) \lneqq 0$, respectively.
			Indeed, if $h\gneqq 0$ holds, then $u_0$ is a supersolution to
			\begin{align*}
				\left\{
				\begin{array}{rll}
					-\divi(A(x)Du_0)&\geq c_\lambda(x)u_0+(M(x)Du_0,Du_0)+h(x)\gneqq 0&\mbox{ in } \Omega\\
					u_0&=0 &\mbox{ on } \partial\Omega
				\end{array}
				\right.
			\end{align*}
			Then, applying the Strong Maximum Principle we obtain $u_0>0$ in $\Omega$. Furthermore, by the Hopf Lemma we conclude that $u_0\gg 0$ in $\Omega$.
			On the other hand , if $h \lneqq 0$, then $u_0$ is a subsolution to
			\begin{align*}
				-\divi(A(x)Du_0)\leq c_\lambda(x)u_0+(M(x)Du_0,Du_0)+h(x) \lneqq (M(x)Du_0,Du_0)
			\end{align*}
			and so $v_0={\nu_2}^{-1}(e^{\nu_2u_0}-1)$  is a subsolution to
			\begin{align*}
				-\divi(A(x)Dv_0)&\leq [1+\nu_2 v][-\divi(A(x)Du_0)-\mu_2|Du_0|^2]\\
				&\lneqq [1+ \nu_2v][(M(x)Du_0,Du_0)-\mu_2|Du_0|^2]\lneqq 0 \mbox{ in }\Omega.
			\end{align*}
			Again, by the SMP and the Hopf Lemma we get $v_0\ll 0$ and therefore $u_0\ll 0$.
		
		\begin{proof}[Proof of Theorem \ref{hequiv0}]
		For proving (i), firstly we note that for all $\lambda\in \mathbb{R}$, $u\equiv 0$ is a solution to (\ref{h=0}). In order to prove that for all $\lambda\in(0,\gamma_1)$  problem (\ref{h=0}) has a second solution $u_{\lambda,2}\gneqq 0$, we claim that  problem (\ref{h=0}) has a supersolution $\beta\gg 0$. In fact, taking $\lambda< \gamma_1$ and $\varepsilon>0$ such that
		$
				\lambda{\nu_2}^{-1}{(1+\nu_2v)\ln(1+\nu_2v)}\leq \gamma_1v
		$ for all $v\in[0,\varepsilon]$, we consider the function $\widetilde{\beta}=\varepsilon\varphi_1,$ where $\varphi_1$ denotes the first eigenfunction of \eqref{eig1}
			with $\|\varphi_1\|_{L^{\infty}}=1$ and
			\begin{align*}
				\left\{
				\begin{array}{rll}
					-\divi(A(x)D\widetilde{\beta})&=c_\gamma(x)\widetilde{\beta}
					\gneqq c_\lambda(x) \displaystyle\frac{(1+\nu_2\widetilde{\beta})\ln(1+\nu_2\widetilde{\beta})}{\nu_2}, &\mbox{ in } \Omega\\
					\widetilde{\beta}&=0 &\mbox{ on } \partial\Omega.
				\end{array}
				\right.
			\end{align*}
			Hence, for $\beta$ being defined by $\beta:={\nu_2}^{-1}{\ln(1+\nu_2\widetilde{\beta})}$, we have
			\begin{align*}
				-\divi(A(x)D\beta)&=-\frac{\divi(A(x)D\widetilde{\beta})}{(1+\nu_2\widetilde{\beta})}-\left(A(x)D\widetilde{\beta},D\left[(1+\nu_2\widetilde{\beta})^{-1}\right]\right)\\
				&\gneqq c_\lambda(x)\beta+\frac{\nu_2}{(1+\nu_2\widetilde{\beta})^2}(A(x)D\widetilde{\beta},D\widetilde{\beta})\geq c_\lambda(x)\beta+\nu_2\vartheta\frac{|D\widetilde{\beta}|^2}{(1+\nu_2\widetilde{\beta})^2}\\
				&= c_\lambda(x)\beta+\mu_2|D\beta|^2\geq c_\lambda(x)\beta+(M(x)D\beta,D\beta)
			\end{align*}
			\begin{align*}
			\mbox{and thus,} \quad	\qquad \qquad \left\{
				\begin{array}{rll}
					-\divi(A(x)D\beta)& \gneqq c_\lambda(x)\beta+(M(x)D\beta,D\beta) &\mbox{ in } \Omega\\
					\beta&=0 &\mbox{ on } \partial\Omega.
				\end{array}
				\right. \qquad \qquad \qquad
			\end{align*}
			By the Comparison Principle, we have that $\beta\geq 0$ is a strict supersolution to (\ref{h=0}).
			Since we know that every solution $u$ of problem (\ref{h=0}) satisfies $u\geq 0$, by Lemma \ref{existssub} problem (\ref{h=0}) has a strict subsolution $\alpha\lneqq 0$.
		Therefore, we conclude that problem (\ref{h=0}) has at least two solutions, by following the proof of Theorem \ref{teo5.2} with the solution $u_{\lambda,1}\equiv 0$.
			
			For proving (ii), suppose by contradiction that $u\not\equiv 0$ is another solution to problem (\ref{h=0}) and use $\varphi_1>0 $ the first eigenfunction of \eqref{eig1} as a test function in \eqref{h=0}. Then, 
			\begin{align*}
				\int c_{\gamma_1}(x)u\varphi_1&=\int A(x)DuD \varphi_1 =\int c_\lambda(x)u\varphi_1 +\int (M(x)Du,Du)\varphi_1 \\
				(\gamma_1-\lambda)\int c^+(x)u\varphi_1 &=\int (M(x)Du,Du)\varphi_1\geq\mu_1\int|Du|^2\varphi_1>0,
			\end{align*}
			which provides a contradiction for $\lambda=\gamma_1$.
			
			In case $\lambda>\gamma_1$, for showing that  problem \eqref{h=0} has a second solution $u_{\lambda,2}\ll 0$, let  $\lambda_0\in(\gamma_1,\lambda]$ be such that by Lemma \ref{antimax}, the problem
			\begin{align*}
				-\divi(A(x)Du)=c_{\lambda_0}(x)u+1,
			\end{align*}
			has a solution $u\ll 0$. Then, for $\varepsilon>0$ small enough, the function $\beta_0=\varepsilon u$ satisfies
			\begin{align*}
				-\divi(A(x)D\beta_0)&=c_{\lambda_0}(x)\varepsilon u+\varepsilon\geq c_{\lambda_0}(x)\beta_0+\varepsilon^2\mu_2|Du|^2\geq c_{\lambda_0}(x)\beta_0+(M(x)D\beta_0,D\beta_0)
			\end{align*}
			and the problem (\ref{h=0}) has a supersolution $\beta_0$ with $\beta_0\leq 0$ and $c^+(x)\beta_0\lneqq 0$. Therefore, (iii) follows by Theorem \ref{solucoesnegativas} with $u_{\lambda,2}\equiv 0$.
		
			With the aim of showing the existence of a continuum of solution to problem \eqref{h=0}, we define the operator  
			$
				T_{\lambda}:=
				\left\{
				\begin{array}{rll}
					\overline{T}_\lambda, & \mbox{ if } \lambda\leq \gamma_1,\\
					\widehat{T}_\lambda,& \mbox{ if } \lambda> \gamma_1.
				\end{array}
				\right.
			$
			where  $\overline{T}_\lambda$ for $\lambda\leq \gamma_1$ is defined in (ii) of the proof of Theorem \ref{teo5.2} and the operator $\widehat{T}_\lambda$ for $\lambda\geq \gamma_1$ is defined in (ii) of the proof of Theorem \ref{teo5.3} in both cases with $h\equiv 0$.
		Observe that the case $\lambda\in (-\infty,\gamma_1]$ can be proved as in the proof of Theorem \ref{teo5.2}(ii). Note that, if $u$ is a solution to (\ref{P2}), then $u\geq  u_{\gamma_1}$ and hence it is a solution to (\ref{h=0}). On the other hand, the case $\lambda\in [\gamma_1,+\infty)$ follows the same lines of the proof of Theorem \ref{teo5.3}(ii). If $u$ is a solution to (\ref{p4}), then $u\leq  u_{\gamma_1}$ and hence it is a solution to (\ref{h=0}). Furthermore, since $u_{\gamma_1}\equiv 0$ is the unique solution to the problem (\ref{h=0}) for $\lambda=\gamma_1$, then $i(I-T_{\gamma_1},u_{\gamma_1})=1$.
		
			Applying Theorem \ref{continuum} with\ $\gamma_1>0$, \ we obtain a \ continuum ${\mathcal{C}=\mathcal{C}^+\cup \mathcal{C}^-\subset \overline{\Sigma}}$ \ such that \
			${	\mathcal{C}^+=\mathcal{C}\cap ([{\gamma_1},+\infty)\times C(\overline{\Omega}))\mbox{ and } \mathcal{C}^-=\mathcal{C}\cap ((-\infty,{\gamma_1}]\times C(\overline{\Omega})) \ \mbox{are unbounded in \ } \mathbb{R}^{\pm}\times C(\overline{\Omega}).
		 }$
			By Step 1, we get that if $u \in \mathcal{C}^-$, then $u\geq u_{\gamma_1}$ and it is a solution to \eqref{h=0}. Thus, from (iv) we infer that the projection of  $\mathcal{C}^-$ on $\lambda$-axis is $(0,\gamma_1]$, a bounded interval, 
			and then we deduce that the projection of $\mathcal{C}^+$ on $\lambda$-axis is $[\gamma_1,+\infty)$.
			Hence,
		$
				\mbox{Proj}_{\mathbb{R}}\mathcal{C}=\mbox{Proj}_{\mathbb{R}}\mathcal{C}^-\cup\mbox{Proj}_{\mathbb{R}}\mathcal{C}^+=(0,+\infty).
		$
			Finally, by Theorem \ref{6.3} for any $0<\Lambda_1<\Lambda_2<\gamma_1$ there is a priori bound for the solutions to problem \eqref{h=0}, for all $\lambda\in[\Lambda_1,\Lambda_2]$.
			Then, we have also a $C^{\alpha}$ a priori bound for these solutions, i.e. the projection of $\mathcal{C}\cap ([\Lambda_1,\Lambda_2]\times C(\overline{\Omega}))$ on $C(\overline{\Omega})$ is bounded.
			Since the component $\mathcal{C}^-$ is unbounded in $\mathbb{R}^-\times C(\overline{\Omega})$, its projection on the $C(\overline{\Omega})$ axis must be unbounded.
			Therefore,  $\mathcal{C}$ must emanate from infinity on the right of axis $\lambda=0$.
		\end{proof}

\noindent{\bf Acknowledgment:} The author Mayra Soares would like to thank the financial support received by the postdoctoral fellowship from DGAPA-Unam. 

\bigskip
\noindent\textsc{Fiorella Rend\'on}\\
Departamento de Ciencias,\\
Universidad Continental\\
Arequipa, Per\'u\\
\noindent\texttt{fiorellareg@gmail.com}
\bigskip
 
\noindent\textsc{Mayra Soares}\\
Departamento de Matem\'atica,\\ Universidade de Bras\'ilia - UnB\\
Instituto Central de Ci\^encias, Campus Darci Ribeiro,\\
70910-900, Asa Norte, Bras\'ilia, Distrito Federal, Brasil\\
\noindent\texttt{mayra.soares@unb.br}

\end{document}